\newcommand {\diam }{\mathrm{diam}}
\newcommand {\vol }{\mathrm{vol}}
\newcommand {\myd }{\;\mathrm{d}}
\newcommand {\ray }{\mathcal{R}}
\newcommand {\D }{\mathcal{D}}
\newcommand {\supp }{\mathrm{supp}}
\newcommand {\hess}{\mathrm{Hess}}
\newcommand {\R}{\mathbb{R}}
\newcommand {\lap }{\Delta}
\newcommand {\grad }{\triangledown}
\newcommand {\Ric}{\mathrm{Ric}}
\newcommand {\spa}[1]{\langle{#1}\rangle}
\newcommand {\myarrow}[1]{\mathop{\longrightarrow}\limits^{#1}}
\newcommand{\GH}{\myarrow{GH}}
\newcommand{\Xint}[1]{\mathchoice
	{\XXint\displaystyle\textstyle{#1}}%
	{\XXint\textstyle\scriptstyle{#1}}%
	{\XXint\scriptstyle
		\scriptscriptstyle{#1}}%
	{\XXint\scriptscriptstyle
		\scriptscriptstyle{#1}}%
	\!\int}
\newcommand{\XXint}[3]{{
		\setbox0=\hbox{$#1{#2#3}{\int}$}
		\vcenter{\hbox{$#2#3$}}\kern-.5\wd0}}
\newcommand{\dashint}{\Xint-}
\numberwithin{equation}{section}
\newtheorem{proposition}{Proposition}
\newtheorem{lemma}[proposition]{Lemma}
\newtheorem{sublemma}[proposition]{Sublemma}
\newtheorem{theorem}[proposition]{Theorem}
\newtheorem{corollary}[proposition]{Corollary}
\numberwithin{proposition}{section}
\theoremstyle{definition}
\newtheorem{definition}[proposition]{Definition}
\newtheorem{remark}[proposition]{Remark}
\title[Finite Topological Type]{A finite Topological Type Theorem for open manifolds with Non-negative Ricci Curvature and Almost Maximal Local Rewinding Volume}
\author{HongZhi Huang}
\address{ Department of Mathematics \\  Jinan University\\ Guangzhou 510632\\ PR China}
\email{\href{mailto:huanghz@jnu.edu.cn
	}{huanghz@jnu.edu.cn,
}{\href{mailto:hyyqsaax@163.com
}{hyyqsaax@163.com
}}}
\begin{document}

\maketitle
\begin{abstract} 
In this paper, we present finite topological type theorems for open manifolds with non-negative Ricci curvature, under almost maximal local rewinding volume. Unlike previous related research, our theorems remove the constraints of sectional curvature or conjugate radius, which were crucial additional assumptions on metric regularity in prior results. Notably, our settings do not necessarily satisfy a triangle comparison of Toponogov type. In fact, the method we adopt also extends to many previous related studies.

\vspace*{5pt}
\noindent {\it 2020 Mathematics Subject Classification}: 53C21.

\vspace*{5pt}
\noindent{\it Keywords}: Non-negative Ricci curvature, Finite topological type.

\end{abstract}

\section{Introduction}

An open (non-compact and complete) manifold $M$ is said to have finite topological type, if it is homeomorphic to the
interior of a compact manifold with boundary. According to Cheeger-Gromoll's soul theorem, an open manifold with non-negative sectional curvature always has finite topological type. However, this is not true if one relaxes the assumption on sectional curvature to Ricci curvature for the case of dimensions $\ge4$, as shown by Sha-Yang's example (\cite{SY}).

Therefore, it is natural to explore compelling additional assumptions that ensure the validity of a finite topological type theorem for open manifolds with non-negative Ricci curvature. In this direction, the first result is due to Abresch and Gromoll,

\begin{theorem}[\cite{AG}]\label{AGFiniteTpType}
	Let $M$ be an open $n$-Riemannian manifold with non-negative Ricci curvature. Suppose that 
	\begin{itemize}
		\item [(1)] $M$ has diameter growth of order $o(r^{\frac 1n})$, where $o(r^{\frac1n})$ is a function satisfying $\frac{o(r^\frac 1n)}{r^\frac 1n}\to0$ as $r\to\infty$,
		
		\item [(2)] the sectional curvature is bounded away from $-\infty$.
	\end{itemize}
	Then $M$ has finite topological type.
\end{theorem}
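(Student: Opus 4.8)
The plan is to run the critical point theory of Grove--Shiohama for the distance function $d_p = d(p,\cdot)$ from a fixed point $p\in M$. Recall that $x\neq p$ is a \emph{critical point} of $d_p$ if $0$ lies in the convex hull, in $T_xM$, of the initial velocities of the minimal geodesics from $x$ to $p$; equivalently, for every unit $v\in T_xM$ there is a minimal geodesic $\sigma$ from $x$ to $p$ with $\angle(v,\dot\sigma(0))\le\pi/2$. By the isotopy lemma it is enough to show that all critical points of $d_p$ lie in some ball $B(p,r_0)$: then for $r_1>r_0$ the sublevel set $\overline{B(p,r_1)}$ is a compact manifold with boundary $\partial B(p,r_1)$ and the superlevel set $\{d_p\ge r_1\}$ is a closed collar $\partial B(p,r_1)\times[0,\infty)$, so $M$ is homeomorphic to the interior of $\overline{B(p,r_1)}$. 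I would prove this by contradiction: suppose there are critical points $x_i$ with $r_i:=d(p,x_i)\to\infty$, and play two estimates for an Abresch--Gromoll excess function against each other.

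Fix such an $x_i$. Since $M$ is open, choose a ray $\gamma_i$ issuing from $x_i$, and by criticality a minimal geodesic $\sigma_i$ from $x_i$ to $p$ with $\angle(\dot\sigma_i(0),\dot\gamma_i(0))\le\pi/2$. For $T$ large set $q=q_{i,T}=\gamma_i(T)$ and consider $e(y)=d(p,y)+d(y,q)-d(p,q)$, so $e(x_i)=r_i+T-d(p,q)$. This is where hypothesis (2) enters: writing $K\ge-\Lambda^2$ for the lower sectional curvature bound, Toponogov's hinge comparison in the model space of constant curvature $-\Lambda^2$, applied to the hinge at $x_i$ formed by the minimal geodesics $\sigma_i$ and $\gamma_i|_{[0,T]}$ (whose angle $\theta$ has $\cos\theta\ge0$), gives $\cosh(\Lambda\,d(p,q))\le\cosh(\Lambda r_i)\cosh(\Lambda T)$, hence
\[
 e(x_i)=r_i+T-d(p,q)\ \ge\ r_i+T-\tfrac1\Lambda\operatorname{arccosh}\!\big(\cosh(\Lambda r_i)\cosh(\Lambda T)\big),
\]
and the right-hand side tends to $\tfrac{\ln 2}{\Lambda}$ as $r_i,T\to\infty$; in particular $e(x_i)\ge c(\Lambda)>0$ once $r_i$ and $T$ are large.

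For the opposing upper bound I would use the Abresch--Gromoll excess inequality, which is where $\Ric\ge0$ is used: if $\tau$ is a minimal geodesic from $p$ to $q$ and $h:=d(x_i,\tau)<\min(r_i,T)$, then $e(x_i)\le C(n)\big(\tfrac{h^n}{r_i-h}+\tfrac{h^n}{T-h}\big)^{1/(n-1)}$. Letting $T\to\infty$ removes the second term, so the crux is to bound $h$ using hypothesis (1): if $x_i$ and the point of $\tau$ at distance $\rho_i$ from $p$ lie in the same connected component of $M\setminus\overline{B(p,\rho_i)}$ for a suitable radius $\rho_i\asymp r_i$, then $h\le\diam(\rho_i)=o(\rho_i^{1/n})=o(r_i^{1/n})$, so $h^n=o(r_i)$ and $e(x_i)\le C(n)\big(\tfrac{2\,o(r_i)}{r_i}\big)^{1/(n-1)}\to0$ as $r_i\to\infty$. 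Together with $e(x_i)\ge c(\Lambda)>0$ for large $r_i$, this is a contradiction; hence the critical points of $d_p$ are confined to a bounded set and $M$ has finite topological type.

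The hard part will be the connectivity claim inside the upper bound: one must choose the far point $q$ (along a ray from $x_i$) together with a radius $\rho_i\asymp r_i$ so that $x_i$ and an interior point of the segment $\tau=[p,q]$ at distance $\rho_i$ from $p$ lie in the same component of the complement of the large ball $\overline{B(p,\rho_i)}$ --- this is exactly what makes the diameter growth hypothesis applicable to the pair, and it forces one to examine how rays and minimal segments run relative to the ends of $M$ and to the sets $M\setminus\overline{B(p,\rho)}$. By contrast the Toponogov step is a direct use of (2), and the excess inequality is a standard consequence of $\Ric\ge0$.
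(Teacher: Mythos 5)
Note first that the paper does not prove Theorem~\ref{AGFiniteTpType}: it is cited from~\cite{AG} as background, and the paper's own Theorem~\ref{FiniteToplgType-Main} deliberately avoids your strategy, because Grove--Shiohama critical point theory is unavailable under the rewinding-volume hypothesis (cf.~Remark~\ref{RemForMainThm}(2)). For Theorem~\ref{AGFiniteTpType} itself your framework is the right one, since hypothesis~(2) makes Toponogov comparison available: the hinge comparison lower bound $e(x_i)\ge c(\Lambda)>0$ at a critical point is correct, and the Abresch--Gromoll excess inequality is exactly the paper's Theorem~\ref{ExcessEstimateAG}. So the two opposing estimates are in order.

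The gap is in the choice of ray. Taking $\gamma_i$ to issue from $x_i$ rather than from $p$ introduces two problems that your sketch does not address. First, $\gamma_i$ is only guaranteed to avoid $\overline{B_{\rho_i}(p)}$ for $t>r_i+\rho_i$; already in $\R^n$ a ray from $(r_i,0,\dots,0)$ can run straight through $p$, so the escape of the far part of $\gamma_i$ does not place $x_i$ and $\tau(\rho_i)$ in the same component of $M\setminus\overline{B_{\rho_i}(p)}$. Second, since $\tau$ is a minimal segment from $p$ to $q=\gamma_i(T)$, both $\tau$ and $h=d(x_i,\tau)$ vary with $T$, so ``letting $T\to\infty$ removes the second term'' silently assumes a uniform-in-$T$ bound on $h$ that has not been established. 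The classical argument sidesteps both issues by choosing a ray $\gamma$ from $p$ whose trace at radius $r_i$ lies on the same essential boundary component as $x_i$ --- this is what makes the diameter growth hypothesis bite, and is precisely what the ray density function $\ray_p(r)=\sup_{x\in\partial B_r(p)}d(x,R_p\cap\partial B_r(p))$ of inequality~(\ref{RayDensityControlExcess}) quantifies. With that choice $q=\gamma(T)$, $\tau=\gamma|_{[0,T]}$, and $h=d(x_i,\gamma)\le\D_p(r_i)=o(r_i^{1/n})$ is independent of $T$. You correctly flag the connectivity claim as the crux; the fix is to build that connectivity into the choice of a ray from $p$ (via the essential-diameter/ray-density bookkeeping in Section~2.1 and \cite[Prop.~4.3]{AG}), rather than starting from a ray issued at $x_i$.
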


Note that both conditions (1) and (2) of Theorem \ref{AGFiniteTpType} are necessary. This can be seen from the examples constructed by Menguy (\cite{Men00,Men00a}), where he showed the existence of open manifolds with infinite topological type and positive Ricci curvature that satisfy either bounded diameter growth or sectional curvature bounded from below.

Starting from \cite{AG}, several results on finite topological type have been published (e.g. \cite{SW93,Shen96}). In general, these results follow a similar approach. First, a growth condition on geometric quantities (e.g. Theorem \ref{AGFiniteTpType} (1)) is required to ensure a small excess estimate, which is obtained using Abresch-Gromoll's excess estimate (Theorem \ref{ExcessEstimateAG} below). Additionally, a regularity assumption on metrics (e.g. Theorem \ref{AGFiniteTpType} (2)) is needed to guarantee a triangle comparison of Toponogov type. By combining the small excess estimate and the triangle comparison, it can be concluded that there are no critical points of distance functions, in the sense of Grove-Shiohama, outside a compact subset. Then the conclusion of finite topological type is derived from Grove-Shiohama's critical point theory (\cite{Ch91}).

However, the crucial tool in the common approach mentioned above, Grove-Shiohama's critical point theory, heavily relies on a triangle comparison of Toponogov type. As a result, it appears that the previous results inevitably required assumptions on the lower bound of sectional curvature or conjugate radius (\cite{DW95}) which serve as additional regularity assumptions on metrics. Consequently, the main focus of the past research was on finding suitable growth conditions, with less progress made in exploring weaker regularity assumptions.

Thanks to the recent advancements in Cheeger-Colding theory, the limitations previously imposed by sectional curvature or conjugate radius in investigating finite topological type theorems can now be overcome. In this paper, we diverge from previous studies and delve into finite topological type theorems for open manifolds with non-negative Ricci curvature, under the regularity assumption on metrics: almost maximal local rewinding volume (see condition (\ref{AlmostMaximalRewindingVolume}) below). It should be noted that our assumption does not guarantee the triangle comparison property, but it aligns better with the focus on studying Ricci curvature. Notably, Grove-Shiohama's critical point theory is not applicable to our specific case (Remark \ref{RemForMainThm} (2)). Instead, we utilize a technique that involves the construction of a smooth function by gluing Cheeger-Colding's almost splitting functions and approximating the distance function (\cite{CC96}). We then employ the transformation theorem (\cite{CJN21}) for almost splitting functions to establish the non-degeneracy of this function. It is worth noting that the approach we adopt is applicable even in cases where the sectional curvature or conjugate radius is bounded from below, as demonstrated by the work of \cite{HH22} (Remark \ref{RemarkOnNonDeg}). However, it should be noted that the concept of almost maximal local rewinding volume does not encompass these scenarios. In fact, our proof reveals that the only additional essential regularity assumption required is the monotonicity of the numbers of almost Euclidean factors, in the Gromov-Hausdorff sense, around a fixed point during the process of blowing up. This property is referred to as the generalized Reifenberg property in \cite[Definition 1.4]{HH22}.

Before stating our main theorem, we introduce some definitions. From now on, $(M,p)$ is always a pointed open  $n$-manifold with non-negative Ricci curvature and $r_p(x)=d(x,p)$ denotes the distance function to $p$. Firstly we recall the following definitions,

\begin{definition}[Local rewinding volume]
	The rewinding volume for an $r$-ball $B_r(x)$ is defined as, $$\widetilde{\vol}(B_r(x)):=\vol(B_r(\tilde x)),$$where $(\widetilde {B_r(x)},\tilde x)\to(B_r(p),x)$ is the universal cover.
\end{definition}

We say that an $n$-manifold $M$ satisfies uniformly $(\delta,\rho)$-rewinding Reifenberg, if there exist $\delta,\rho>0$, such that for any $x\in M$, $\widetilde{\vol}(B_{\rho}(x))\ge (1-\delta)w_n \rho^n$, where $w_n$ is the volume of unit ball, $B_1(0^n)$, in the $n$-Euclidean space. According to (\cite{CFG}), a complete $n$-manifold with bounded sectional curvature, $|\sec|\le1$, is uniformly $(\delta(n),\rho(n))$-rewinding Reifenberg, where $\delta(n),\rho(n)$ are universal small positive constants only depending on $n$.

One motivation for studying manifolds with specific types of rewinding volume conditions is their relevance to the research on topological properties of manifolds with bounded sectional curvature. The study of such manifolds has been greatly enriched by the collapsing theory developed by Cheeger, Fukaya, and Gromov (\cite{CFG}). As discussed in the previous paragraph, manifolds with bounded sectional curvature form a subset of the larger class of manifolds with lower Ricci curvature bound and almost maximal local rewinding volume. Rong (\cite{Ro18}) conjectures that manifolds with lower Ricci curvature bound and certain rewinding volume assumptions exhibit similar collapsing behavior and therefore share many geometric and topological properties with manifolds with bounded sectional curvature. This conjecture has been supported by a growing body of recent research (e.g. \cite{CRX1604,HKRX,Hua,Ro}).

In the study of finite topological type of open manifolds, the excess function plays a crucial role. As previously mentioned, different types of geometric growth conditions aim to ensure a small excess property. However, the definitions of the excess function for open manifolds may vary slightly in different literature. For the purpose of this study, we will adopt the following definition.
\begin{definition}[Excess function]
	Define $b^r_p(x)=r-d(x,\partial B_r(p))$, and $e_p(x)=r_p(x)-b_p^{2r_p(x)}(x)$,  where $\partial B_r(p)$ denotes the boundary of an $r$-ball around $p$. $e_p$ is called the excess function with respect to $p$.
\end{definition}

\begin{remark}
	Some different definitions are adopted in other places, for example, $E_p(x):=r_p(x)-\lim_{r\to\infty}b_p^{r}(x)$, and $E_p^\gamma(x):=r_p(x)-
	\lim_{t\to\infty}t-d(x,\gamma(t))$, $\bar E_p(x):=\inf_{\gamma}
	E_p^\gamma(x)$, where the infimum is taken over all ray $\gamma$ with unit speed form $p$. It's easy to see the relations $$e_p\le E_p\le \bar E_p\le E_p^\gamma.$$
\end{remark}

Now we are ready to state our main result.

\begin{theorem}\label{FiniteToplgType-Main}
	There exists $\delta(n)>0$ such that the following holds. Let $M$ be an open $n$-manifold with non-negative Ricci curvature. Suppose that there exist constants $\alpha\in[0,1]$, $s>0$ (we require $s\le\delta(n)$ when $\alpha=1$), satisfying, for any $x\in M$ outside a compact subset,  
	\begin{equation}\label{ExcessGrowthCondition}
		\frac{e_p(x)}{r_p(x)^\alpha}\le \delta(n)s, 
	\end{equation}

	\begin{equation}\label{AlmostMaximalRewindingVolume}
		\widetilde{\vol}(B_{s r_p(x)^\alpha}(x))\ge (1-\delta(n)) w_ns^n r_p(x)^{n\alpha}.
	\end{equation}
	 Then $M$ has finite topological type. 

\end{theorem}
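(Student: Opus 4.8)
I would prove Theorem~\ref{FiniteToplgType-Main} as follows.

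\smallskip\noindent\emph{Plan of proof.} The idea is to bypass Grove--Shiohama's critical point theory and instead produce, outside a compact set $K\subset M$, a \emph{smooth} proper function $f\colon M\setminus K\to\R$ with $|f-r_p|\le C(n)\,s\,r_p^{\alpha}$ and $\nabla f\neq0$ everywhere; since $\alpha\le1$ (and $s\le\delta(n)$ is small when $\alpha=1$) the first estimate makes $f$ a proper exhaustion with $f\to+\infty$ once $\delta(n)$ is small. Granting this, pick a regular value $c$ of $f$ with $K\subset\{f<c\}$: the gradient flow of $\nabla f/|\nabla f|^{2}$ is complete on $\{f\ge c\}$ (unit $f$-speed, $f$ proper), so it identifies $\{f\ge c\}$ with $f^{-1}(c)\times[0,\infty)$, whence $M$ is diffeomorphic to the interior of the compact manifold with boundary $K\cup\{f\le c\}$ and has finite topological type. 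All error terms below are written $\Psi=\Psi(\delta(n)\,|\,n)$: a positive quantity depending only on $n$ with $\Psi\to0$ as $\delta(n)\to0$.

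\smallskip\noindent\emph{Local almost splitting functions.} Fix $x$ with $r:=r_p(x)$ large and set $\rho:=s\,r^{\alpha}$. From \eqref{ExcessGrowthCondition} and the definition of $e_p$ there is $q\in\partial B_{2r}(p)$ with $d(x,q)\le r+e_p(x)$, so the pair $(p,q)$ has excess $\le e_p(x)\le\delta(n)\rho$ at $x$; invoking \eqref{ExcessGrowthCondition} at nearby points and Abresch--Gromoll's excess estimate (Theorem~\ref{ExcessEstimateAG}), the excess of $(p,q)$ stays $\le\Psi\rho$ on $B_{\rho}(x)$, while both arms have length $\ge r-\rho$ and $\rho/r=s\,r^{\alpha-1}\le\delta(n)$ (this ratio is $\le s\le\delta(n)$ when $\alpha=1$ and tends to $0$ when $\alpha<1$, hence is $\le\delta(n)$ after enlarging $K$). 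Thus $r_p(x)-r_p$ is an almost splitting function on $B_{\rho}(x)$ in the sense of Cheeger--Colding, and its harmonic replacement $\mathbf b_x$ on $B_{\rho}(x)$ satisfies, after rescaling $B_{\rho}(x)$ to unit size, $|\nabla\mathbf b_x|\le1+\Psi$, an $L^{2}$-small Hessian bound, and the $L^{1}$-mean bound $|\mathbf b_x-(r_p(x)-r_p)|\le\Psi\rho$ on $B_{\rho/2}(x)$. Using $r_p$ rather than $q$ here is convenient: for overlapping balls the model functions $r_p(x)-r_p$ and $r_p(y)-r_p$ differ by the \emph{exact} constant $r_p(x)-r_p(y)$, so combining the $L^{1}$-closeness of $\mathbf b_x,\mathbf b_y$ to their models with the mean value property of the harmonic function $\mathbf b_x-\mathbf b_y$ gives $|\mathbf b_x-\mathbf b_y-(r_p(x)-r_p(y))|\le\Psi\rho$ pointwise on the overlap, hence $|\nabla\mathbf b_x-\nabla\mathbf b_y|\le\Psi$ there by the Cheng--Yau interior gradient estimate ($\Ric\ge0$).

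\smallskip\noindent\emph{Non-degeneracy.} The crucial (and, I expect, hardest) step is to upgrade the $L^{2}$ control of $|\nabla\mathbf b_x|$ to a pointwise lower bound, i.e.\ to rule out collapse --- this is exactly what \eqref{AlmostMaximalRewindingVolume} is for. By that hypothesis the universal cover $\pi\colon\widetilde{B_{\rho}(x)}\to B_{\rho}(x)$ satisfies $\vol(B_{\rho}(\tilde x))=\widetilde{\vol}(B_{\rho}(x))\ge(1-\delta(n))w_{n}\rho^{n}$, so Bishop--Gromov rigidity forces $B_{\rho/2}(\tilde x)$ to be $\Psi$-Gromov--Hausdorff close to a Euclidean ball --- non-collapsed and $\Psi$-Reifenberg, the generalized Reifenberg property isolated in \cite{HH22}. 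On this non-collapsed, almost Euclidean cover the lift of $\mathbf b_x$ is a genuine $\Psi$-splitting map, and the transformation theorem for almost splitting functions --- propagating the splitting to all interior points and all smaller scales, together with $\varepsilon$-regularity at the tiny scales where the cover is almost Euclidean --- gives $\big||\nabla(\mathbf b_x\circ\pi)|-1\big|\le\Psi$ pointwise on $B_{\rho/4}(\tilde x)$; since $\pi$ is a local isometry this descends to $\big||\nabla\mathbf b_x|-1\big|\le\Psi$ on $B_{\rho/4}(x)$. A point requiring care is uniformity: all errors $\Psi$ must be independent of $x$ and $r_p(x)$, which is why one rescales each $B_{\rho}(x)$ to unit size before applying the excess, splitting, Reifenberg and transformation results, and why \eqref{AlmostMaximalRewindingVolume} is assumed at every point of $M\setminus K$.

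\smallskip\noindent\emph{Gluing.} By a Vitali argument and Bishop--Gromov, cover $M\setminus K$ by balls $B_{\rho_{x}/4}(x)$, $x\in\mathcal A$, of multiplicity $\le N(n)$ with $\rho_{x}$ mutually comparable on overlaps, and take a subordinate partition of unity $\{\phi_{x}\}$ with $|\nabla\phi_{x}|\le C(n)/\rho_{x}$. Set $f:=\sum_{x\in\mathcal A}\phi_{x}\big(r_p(x)-\mathbf b_{x}\big)$. Since $r_p(x)-\mathbf b_{x}$ is within $C(n)\rho_{x}$ of $r_p$ on $B_{\rho_{x}/4}(x)$ one gets $|f-r_p|\le C(n)s\,r_p^{\alpha}$, so $f$ is smooth, proper and $f\to+\infty$. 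Using $\sum_{x}\nabla\phi_{x}\equiv0$ and fixing an index $x_{0}$ active at a given point, $\nabla f=-\sum_{x}\phi_{x}\nabla\mathbf b_{x}+\sum_{x}(\nabla\phi_{x})\big((r_p(x)-\mathbf b_{x})-(r_p(x_{0})-\mathbf b_{x_{0}})\big)$; the second sum has norm $\le N(n)C(n)\Psi$ by the overlap estimate $|\mathbf b_{x}-\mathbf b_{x_{0}}-(r_p(x)-r_p(x_{0}))|\le\Psi\rho$, while in the first sum the active vectors $\nabla\mathbf b_{x}$ have norm in $[1-\Psi,1+\Psi]$ and are pairwise within $\Psi$, so it has norm $\ge1-2\Psi$. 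Hence $|\nabla f|\ge1-2\Psi-N(n)C(n)\Psi\ge\tfrac12$ once $\delta(n)$ is fixed small, and the reduction in the first paragraph applies. The honest interplay of the rewinding volume bound with the splitting and transformation theorems in the non-degeneracy step is where I expect the real work to lie; the remaining steps combine standard excess-estimate, almost-splitting, covering and gradient-flow arguments.
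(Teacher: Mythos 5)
Your proposal follows the same overall strategy as the paper — replace Grove--Shiohama's critical point theory by constructing, from harmonic almost-splitting functions, a smooth proper Morse function close to $r_p$, and certify non-degeneracy via the rewinding volume condition plus a transformation theorem for almost splitting maps — but the implementation differs at two points.

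First, the cover. You glue over a Vitali-type cover of balls $B_{\rho_x/4}(x)$ of comparable radii $\rho_x=s\,r_p(x)^{\alpha}$ with multiplicity $\le N(n)$, together with a smooth partition of unity subordinate to it. The paper instead solves Dirichlet problems on a one-parameter family of concentric annuli $A_{r_i-20sr_i^\alpha,\,r_i+20sr_i^\alpha}(p)$ with $r_{i+1}=r_i+4sr_i^\alpha$, and — a small but elegant trick — builds the cutoffs $\phi_i$ from the harmonic approximants $h_i$ themselves rather than from $r_p$, which avoids having to smooth the distance function and keeps the cutoff gradient and Laplacian estimates (Proposition \ref{SupportFunctionReg}) trivially controlled. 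Your ball cover works just as well; the annular cover is essentially one-dimensional and makes the overlap bookkeeping of Proposition \ref{GoodCoverProperties} cleaner, while the Vitali ball cover requires the (standard but nontrivial) construction of a uniformly Lipschitz subordinate partition of unity with bounded multiplicity.

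Second, and more substantively, the order of operations. The paper first assembles the glued function $h$ and verifies in Theorem \ref{MorseFunctionExistence} that $h$ itself satisfies the $L^2$ gradient, $C^0$ closeness and $L^\infty$ Laplacian bounds at every point and scale; only then (in Lemma \ref{NonDegenrateLemma}) does it apply the transformation theorem plus the rewinding volume condition to the rescaled restriction of $h$ to conclude $\myd h\neq0$. You instead establish non-degeneracy of each local piece $\mathbf b_x$ before gluing, and derive $|\nabla f|\ge 1/2$ from the pointwise estimates on the $\nabla\mathbf b_x$ and their pairwise closeness. This order is valid and arguably more transparent, since one only ever applies the transformation theorem to a \emph{harmonic} function on a single chart, so the Laplacian input is zero. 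But be careful with what the transformation theorem actually yields: Theorem \ref{TransThm} produces matrices $A_r$ (with $|A_r|+|A_r^{-1}|\le(1+\epsilon)r^{-\epsilon}$) such that $A_r u$ has $L^2$-small gradient deviation; it does not, in general, give $\|\nabla u\|\approx1$ pointwise. That stronger statement is recovered, as you indicate, because the rewinding volume hypothesis makes the universal cover Reifenberg in the sense of \cite{HH22}: then $A_r$ stays bounded, the harmonic radius is bounded below, and elliptic estimates upgrade $L^2$ to $C^{1,\alpha}$ — precisely the chain of deductions in the paper's Lemma \ref{NonDegenrateLemma}. Your sketch of this step is correct in spirit, but a reader would want the reduction to the cover (via \cite[Lemma 1.6]{KW}, as the paper does) and the harmonic-radius argument spelled out; the phrase ``transformation theorem gives $\big|\|\nabla\mathbf b_x\|-1\big|\le\Psi$ pointwise'' overstates what the theorem alone delivers.

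One last small point: the almost-Euclidean structure of the cover comes from Colding's volume convergence / Cheeger--Colding's almost rigidity, not from Bishop--Gromov rigidity per se; Bishop--Gromov gives the monotonicity, the almost rigidity gives the Gromov--Hausdorff closeness.
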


\begin{remark}\label{RemForMainThm}\quad
	
	\begin{itemize}
		\item [(1)] If one replaces the regularity condition (\ref{AlmostMaximalRewindingVolume}) by asymptotically non-negative sectional curvature,
		\begin{equation}\label{SectionalCurvatureDecay}
			K(x)\ge-\left(\frac{C}{1+sr_p(x)^{\alpha}}\right)^{2},
		\end{equation}
		the conclusion of Theorem \ref{FiniteToplgType-Main} is still true, which has been proved in \cite{ShSh}. The approach to Theorem \ref{FiniteToplgType-Main} also applies to this case, although either of conditions (\ref{AlmostMaximalRewindingVolume}) and (\ref{SectionalCurvatureDecay}) is not included in each other. Similarly one may replace (\ref{AlmostMaximalRewindingVolume}) by conjugate radius $r_c(x)\ge sr_p(x)^\alpha$.
		
		\item [(2)] The exclusion of critical points for a distance function in the Grove-Shiohama sense outside a compact set depends on estimating the angles of a thin triangle. However, the behavior of angles may not be well-behaved for manifolds with non-negative Ricci curvature, even at regular points, without assuming anything about sectional curvature (as shown by examples in \cite{CN13}). This implies that the distance function $r_p$ in Theorem \ref{FiniteToplgType-Main} may have critical points that tend to infinity, which prevents the application of Grove-Shiohama's critical point theory.
	\end{itemize}

\end{remark}

Based on previous studies, the growth condition of the excess function (\ref{ExcessGrowthCondition}) can be inferred from the growth conditions of various geometric quantities such as diameter or volume. These growth conditions are summarized in the following proposition, and readers are referred to \cite{AG,SW93,Shen96,Xia02,Wa04,JY}, etc., for more details.

\begin{proposition}\label{FiniteToplgType-VariousGrowthCondition}
	Let $M$ be an open $n$-manifold with non-negative Ricci curvature. If $M$ satisfies one of the following growth conditions, for some $\alpha\in[0,1]$, small $\delta>0$, and all large $r$,
	\begin{itemize}
		\item [(1)] essential diameter of ends $\D_p(r)\le\delta^{\frac{n-1}{n}} r^\frac{1+(n-1)\alpha}{n}$,
		
		\item  [(2)] $\frac{\vol(B_r(p))}{v_p(r)}\le\delta^{\frac{n-1}{n}} r^{1+\frac{1+(n-1)\alpha}{n}}$, where $v_p(r)=\inf_{x\in\partial B_r(p)}\vol(B_1(x))$,
		
		\item  [(3)] for some $\nu>0$, $r^{\frac{(n-1)^2(1-\alpha)}{n}}\left(\frac{\vol B_r(p)}{r^n}-\nu\right)\le\delta^{\frac{(n-1)^2}{n}}$,
	\end{itemize}
	
	then outside a compact set, $$\frac{e_p(x)}{r_p(x)^\alpha}\le C\delta,$$where $C$ is a positive universal constant depending on $n$ for (1), (2), and $n,\nu$ for (3).

\end{proposition}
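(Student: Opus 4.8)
The plan is to run all three cases through the Abresch--Gromoll excess estimate (Theorem~\ref{ExcessEstimateAG}), the only nontrivial input; each of the hypotheses (1)--(3) is calibrated so as to control the ``height'' entering that estimate. First I would reduce $e_p$ to an Abresch--Gromoll excess. Fix $x$ outside a large compact set, write $r:=r_p(x)$, and choose $q\in\partial B_{2r}(p)$ in the same unbounded component of $M\setminus B_r(p)$ as $x$; such $q$ exists once $r$ is large, and $d(p,q)=2r$. Let $\sigma$ be a minimal geodesic from $p$ to $q$ and set $h:=d(x,\sigma)$. Then
\[
	e_p(x)=d\big(x,\partial B_{2r}(p)\big)-r\ \le\ d(x,q)-r\ =\ d(p,x)+d(x,q)-d(p,q),
\]
which is exactly the excess of the pair $(p,q)$ at $x$, and since $d(x,q)\ge d(p,q)-d(p,x)=r$ the parameter $\min\{d(x,p),d(x,q)\}$ in the estimate equals $r$. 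Once $h\le r/2$ — which follows from the bound proved in each case — Theorem~\ref{ExcessEstimateAG} gives $e_p(x)\le c_n(h^{\,n}/r)^{1/(n-1)}$ with $c_n$ dimensional. Hence it suffices to prove, under each hypothesis, a height bound
\[
	h\ \le\ C_n\,\delta^{\frac{n-1}{n}}\,r^{\frac{1+(n-1)\alpha}{n}},
\]
since then $h^{\,n}/r\le C_n^{\,n}\delta^{\,n-1}r^{(n-1)\alpha}$, and raising to the power $\tfrac1{n-1}$ yields $e_p(x)\le C\delta\,r^\alpha$. In case (1) this is immediate: the point $\sigma(r)$ of $\sigma$ at distance $r$ from $p$ lies on $\partial B_r(p)$ and, as $\sigma|_{[r,2r]}$ avoids $B_r(p)$, in the same component as $q$ and hence as $x$, so $h\le d(x,\sigma(r))\le\D_p(r)$, which is the hypothesis.

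For cases (2) and (3) I would deduce from the volume hypothesis that $\D_p(r)\le\delta^{\frac{n-1}{n}}r^{(1+(n-1)\alpha)/n}$ and then quote the case-(1) argument. For (2): a Bishop--Gromov packing argument in the annulus $B_{2r}(p)\setminus B_r(p)$ (rather than only on the sphere $\partial B_r(p)$) yields, via the relative volume comparison $\vol(B_{Cr}(p))\le C^n\vol(B_r(p))$ and the lower bound $v_p(r)$ on unit-ball volumes near $\partial B_r(p)$, a bound $\D_p(r)\le C_n\,\vol(B_r(p))/\big(r\,v_p(r)\big)$; the exponents in (2) are exactly those that convert this into the displayed bound. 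For (3): the quantity $\vol(B_r(p))/r^n-\nu$ is a relative volume deficit, non-increasing in $r$ by Bishop--Gromov, and a Cheeger--Colding-type almost-metric-cone (segment) estimate bounds $\D_p(r)$ by $r$ times the $\tfrac1{n-1}$-power of that deficit at scales comparable to $r$; the powers $(n-1)^2(1-\alpha)/n$ and $(n-1)^2/n$ in (3) are precisely $(n-1)$ times the corresponding powers in case (1), so the $\tfrac1{n-1}$-power lands exactly on the needed bound. The precise forms of these implications are worked out in \cite{AG,SW93,Shen96,Xia02,Wa04,JY}.

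The main obstacle is the exponent bookkeeping in the last step: each growth hypothesis has to be converted into \emph{exactly} the displayed height bound, so that after the $\tfrac1{n-1}$-power of the Abresch--Gromoll estimate one lands on an excess bound that is linear in $\delta$ and of order $r^\alpha$. This is most delicate for (2), where naive packing along a single sphere only gives $\D_p(r)\lesssim\vol(B_r(p))/v_p(r)$, which is a factor $r$ too weak, so one must genuinely exploit the annular geometry between scales $r$ and $2r$ to recover the missing $r^{-1}$. A minor secondary point is the topological fact used in case (1), that $q$ and therefore $\sigma(r)$ can be chosen in the same unbounded component of $M\setminus B_r(p)$ as $x$; everything beyond these points is Bishop--Gromov volume comparison together with Theorem~\ref{ExcessEstimateAG}.
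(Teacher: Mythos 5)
Your high-level strategy (run the Abresch--Gromoll estimate and bound the ``height'' $h$ in each case) matches the paper's, and your treatments of cases (2) and (3) are compatible with the paper's reduction (case (2) via the annular packing bound $\D_p(r)\lesssim \vol(A_{r-2,r+2}(p))/v_p(r)$, case (3) via the ray-density bound of [Shen96, OSY]). The place where your argument genuinely diverges, and where I think there is a real gap, is exactly the step you flag at the end as a ``minor secondary point'' in case (1): you claim that $q$ can be chosen in the same unbounded component of $M\setminus B_r(p)$ as $x$, hence $\sigma(r)$ and $x$ lie on a common $\Sigma_r$, giving $h\le d(x,\sigma(r))\le \D_p(r)$. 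But $\D_p(r)$ only controls the diameter of connected components of $\partial C(p,r)$ that meet $R_p$, while $x$ is merely an arbitrary point of $\partial B_r(p)$. There is no a priori reason that an arbitrary $x\in\partial B_r(p)$ lies on (or arbitrarily close to) the boundary of an \emph{unbounded} component of $M\setminus\overline{B_r(p)}$ --- $x$ could for instance be a point where $r_p$ has a local maximum, in which case the component of $M\setminus B_r(p)$ containing $x$ is bounded and your choice of $q$ does not exist. For such $x$, the inequality $h\le\D_p(r)$ is simply not justified. This is not a cosmetic issue: the paper's route is to bound the height by $\ray_p(r)\le\diam(\partial B_r(p))$, which works for \emph{every} $x\in\partial B_r(p)$, and then to prove (Lemma~\ref{SmallEsentialDiamterImpliesSmallExcess}) that under small linear growth of $\D_p(r)$ one has $\diam(\partial B_r(p))\le(1+\epsilon)\D_p(r)$. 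That lemma occupies the bulk of Subsection 2.1 and uses the quantitative splitting theorem and a nontrivial compactness argument (Sublemma~\ref{NonBranchLemma}). Your proof silently dispenses with it, which is precisely the missing content.

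Two smaller points. First, even granting that $x$ is on the boundary of an unbounded component $U$, you need $\Sigma_r:=\partial U$ to intersect $R_p$ for it to be admissible in the definition of $\D_p(r)$; your chosen $\sigma$ is a minimal geodesic to a finite point, not a ray, so this requires a separate (standard, but omitted) argument that every unbounded component's boundary meets a ray. Second, in case (3), the mechanism is not really a Cheeger--Colding almost-metric-cone estimate but the ray-density packing bound of Lemma~\ref{LargeVolumeGrowth}, which controls $\ray_p(r)$ directly rather than $\D_p(r)$; your description of the exponents is right, but the attribution of the mechanism is off. If one wanted to repair your case (1), the honest fix is to reinstate the comparison $\diam(\partial B_r(p))\le(1+\epsilon)\D_p(r)$, i.e. to prove Lemma~\ref{SmallEsentialDiamterImpliesSmallExcess}, after which cases (2) and (3) follow as you sketch.
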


Proposition \ref{FiniteToplgType-VariousGrowthCondition} (2) and (3) have been established in previous studies. Proposition \ref{FiniteToplgType-VariousGrowthCondition} (2) is a corollary of Proposition \ref{FiniteToplgType-VariousGrowthCondition} (1), according to \cite{SW93}, while Proposition \ref{FiniteToplgType-VariousGrowthCondition} (3) has been proven in \cite{Shen96,OSY}. To facilitate readers, we provide detailed proofs of Proposition \ref{FiniteToplgType-VariousGrowthCondition} (2) and (3) at the end of subsection 2.1. However, Proposition \ref{FiniteToplgType-VariousGrowthCondition} (1) requires additional work as it does not assume sectional curvature. We will demonstrate that if an open manifold with non-negative Ricci curvature exhibits small linear growth of essential diameter of ends, then its essential diameter of ends and diameter of geodesic balls are nearly equal.

\begin{lemma}\label{SmallEsentialDiamterImpliesSmallExcess_Copy}
	For any $n,\epsilon>0$, there exists $\delta(n,\epsilon)>0$, satisfying the following properties. Let $(M,p)$ be an open $n$-manifold with non-negative Ricci curvature. If
	\begin{equation*}\label{SmallEsentialDiamter}
		\limsup_{r\to\infty}\frac{\D_p(r)}{r}\le\delta(n,\epsilon),
	\end{equation*}
	then $M$ is isometric to $\R\times N$ where $N$ is compact, or 
	\begin{equation*}\label{DiameterRatio}
		\limsup_{r\to\infty}\frac{\diam (\partial B_r(p))}{\D_p(r)}\le 1+\epsilon.
	\end{equation*}
	
\end{lemma}

The concept of the essential diameter of ends, $\D_p(r)$, as defined in Definition \ref{DefEssentialDiameter}, is valuable because it allows for estimating $\D_p(r)$ instead of $\diam(\partial B_r(p))$ by volume, which can be done in some cases (\cite{SW93}). The proof of the lemma mentioned above can be found in Lemma \ref{SmallEsentialDiamterImpliesSmallExcess} below. Now, Proposition \ref{FiniteToplgType-VariousGrowthCondition} (1) is a direct consequence of the estimate provided by Abresch-Gromoll (Theorem \ref{ExcessEstimateAG} below).

By substituting either one condition of Proposition \ref{FiniteToplgType-VariousGrowthCondition} for (\ref{ExcessGrowthCondition}) in Theorem \ref{FiniteToplgType-Main} and choosing a specific value of $\alpha\in[0,1]$, we can obtain a number of finite topological type theorems. Some interesting cases are listed below.

\begin{corollary}\label{Corrollaries-FinitTopType}
	Let $M$ be an open $n$-manifold with non-negative Ricci curvature.
	\begin{enumerate}
		\item If $M$ has Euclidean volume growth, and any infinity cone of $M$ is smooth,
		
		\item If $M$ satisfies uniformly rewinding $(\delta(n),\rho)$-Reifenberg, and either of $\D_p(r)=o(r^\frac{1}{n})$, or $\frac{\vol(B_r(p))}{v_p(r)}=o (r^{1+\frac1n})$, 
		
		\item If $n=4$, and the supremum of Ricci curvature $\sup\Ric<\infty$, and $M$ satisfies $\sup_{r>0}\D_p(r)<\infty$ or $\frac{\vol (B_r(p))}{r^4}=\nu+o(r^{-\frac94})$, for some $\nu>0$.

	\end{enumerate}

	Then, $M$ has finite topological type.
\end{corollary}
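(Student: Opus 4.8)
The plan is to deduce Corollary~\ref{Corrollaries-FinitTopType} from Theorem~\ref{FiniteToplgType-Main} by checking, in each of the three cases, that the two hypotheses of the theorem --- the excess growth bound \eqref{ExcessGrowthCondition} and the almost maximal local rewinding volume bound \eqref{AlmostMaximalRewindingVolume} --- hold for a suitable choice of the parameters $\alpha\in[0,1]$ and $s>0$, outside a compact set. In each case the excess bound is supplied by Proposition~\ref{FiniteToplgType-VariousGrowthCondition}, so the real work is (a) translating the stated growth hypothesis into one of the conditions (1)--(3) of that proposition with a constant small enough to beat the $\delta(n)$ threshold, and (b) verifying the rewinding volume bound from the extra geometric input specific to that case (Euclidean volume growth plus smooth cones; uniform rewinding Reifenberg; or the dimension-four plus $\sup\Ric<\infty$ hypothesis).

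For case (1), I would take $\alpha=1$ and $s=\delta(n)$. Euclidean volume growth, $\lim_{r\to\infty}\vol(B_r(p))/r^n=\nu>0$, together with Bishop--Gromov monotonicity forces the density to be nearly constant at large scales, so condition (3) of Proposition~\ref{FiniteToplgType-VariousGrowthCondition} with this $\nu$ holds for all large $r$ with arbitrarily small $\delta$, giving \eqref{ExcessGrowthCondition}. For the rewinding bound \eqref{AlmostMaximalRewindingVolume} at scale $s\,r_p(x)$: by Euclidean volume growth every asymptotic cone of $M$ is a metric cone $C(Y)$, and the hypothesis that every infinity cone has smooth cross section means the cross section $Y$ is a smooth manifold, so the tangent cone at the vertex is $\mathbb{R}^n$; a rescaling/contradiction argument --- if \eqref{AlmostMaximalRewindingVolume} failed for a sequence $x_i\to\infty$, pass to a limit of the rescaled pointed universal covers $(\widetilde{B_{s r_p(x_i)}(x_i)},\tilde x_i)$, which by volume convergence and the cone structure must be (a ball in) $\mathbb{R}^n$ with full volume, contradicting the strict deficit --- yields \eqref{AlmostMaximalRewindingVolume} for large $r_p(x)$.

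For case (2), the uniform rewinding $(\delta(n),\rho)$-Reifenberg hypothesis is essentially \eqref{AlmostMaximalRewindingVolume} verbatim once we renormalize: it gives $\widetilde{\vol}(B_\rho(x))\ge(1-\delta(n))w_n\rho^n$ for \emph{all} $x$, so taking $\alpha=0$ and $s=\rho$ makes \eqref{AlmostMaximalRewindingVolume} automatic (with $r_p(x)^{n\alpha}=1$), while the $o(r^{1/n})$ essential-diameter (resp.\ $o(r^{1+1/n})$ volume-ratio) growth is exactly condition (1) (resp.\ (2)) of Proposition~\ref{FiniteToplgType-VariousGrowthCondition} with $\alpha=0$ and $\delta\to0$, so \eqref{ExcessGrowthCondition} holds outside a compact set. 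For case (3) ($n=4$), I would again take $\alpha=1$: the bounded essential diameter $\sup_r\D_p(r)<\infty$ is condition (1), and $\vol(B_r(p))/r^4=\nu+o(r^{-9/4})$ is condition (3), with $(n-1)^2(1-\alpha)/n=0$ at $\alpha=1$ --- so here one needs to be a little careful, taking instead $\alpha$ slightly less than $1$ so that the exponent $r^{(n-1)^2(1-\alpha)/n}=r^{9(1-\alpha)/4}$ absorbs the $o(r^{-9/4})$ decay and still forces $\delta$ small; then \eqref{ExcessGrowthCondition} follows. The rewinding bound \eqref{AlmostMaximalRewindingVolume} in dimension four comes from the combination $\Ric\ge0$, $\sup\Ric<\infty$ and almost-Euclidean volume (implied by $\vol(B_r(p))/r^4\to\nu$ with $\nu$ close to $w_4$, or more generally by an $\epsilon$-regularity argument): in dimension four, two-sided Ricci bounds plus a volume noncollapsing/near-maximality hypothesis give definite lower rewinding volume at a definite scale, by the $\epsilon$-regularity theory for four-manifolds with bounded Ricci curvature.

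The main obstacle is case (3): unlike cases (1) and (2), where the rewinding volume bound is either nearly tautological (case 2) or follows cleanly from the cone structure (case 1), in dimension four one must genuinely invoke the $\epsilon$-regularity theorem for Einstein-type (bounded Ricci) four-manifolds to convert the two-sided curvature bound and the volume asymptotics into the scale-invariant rewinding estimate \eqref{AlmostMaximalRewindingVolume}, and one must simultaneously juggle the choice of $\alpha$ near $1$ so that \emph{both} the exponent bookkeeping in Proposition~\ref{FiniteToplgType-VariousGrowthCondition}(3) and the scale $s\,r_p(x)^\alpha$ in the $\epsilon$-regularity input are compatible; checking that the two requirements on $\alpha$ and $s$ do not conflict, and that all estimates hold uniformly outside one fixed compact set, is where the care is needed.
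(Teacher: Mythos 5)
Your handling of cases (1) and (2) is essentially the paper's argument: case (2) is indeed tautological once you take $\alpha=0$, $s=\rho$, and case (1) is proved by the same rescaling--contradiction argument (the paper scales by $r_p(x_i)^{-1}$, invokes Theorem~\ref{Thm-VolumeCone} to get a metric cone with smooth cross section as the limit, and then uses Colding volume convergence plus Bishop--Gromov monotonicity to contradict the failure of the volume lower bound). One small stylistic difference: the paper never passes to universal covers in cases (1) or (3); it establishes the bound on the ordinary volume $\vol(B_{sr_p(x)^\alpha}(x))$ and then uses $\vol\le\widetilde\vol$.

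Case (3) is where your proposal has a genuine gap, and in fact two inconsistencies. First, the choice of $\alpha$: you propose $\alpha=1$ (then back off to ``slightly less than 1''), but the paper takes $\alpha=0$, and this is not a cosmetic choice. With $\alpha\approx1$ the rewinding volume condition (\ref{AlmostMaximalRewindingVolume}) must hold at the blow-up scale $s\,r_p(x)^\alpha\to\infty$, which would force the volume ratio $\nu$ to be close to $w_4$; but the hypothesis only gives $\nu>0$. You seem aware of this tension, since when you turn to the rewinding bound you suddenly talk about ``a definite scale,'' which is the $\alpha=0$ regime --- so the excess estimate and the volume estimate in your write-up are implicitly being applied with two different values of $\alpha$, which Theorem~\ref{FiniteToplgType-Main} does not allow. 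Note that Proposition~\ref{FiniteToplgType-VariousGrowthCondition}(3) with $\alpha=0$ and $n=4$ already matches the hypothesis $\vol B_r(p)/r^4=\nu+o(r^{-9/4})$ exactly (the exponent is $(n-1)^2/n=9/4$), so there is no reason to push $\alpha$ toward $1$. Second, the mechanism: the paper does not use an $\epsilon$-regularity theorem or any near-maximality of $\nu$. It runs a fixed-scale compactness argument: take $x_i\to\infty$ where the fixed-scale volume bound fails, establish non-collapsing $\vol(B_1(x_i))\ge v>0$ (for the diameter hypothesis this is a nontrivial step via the Busemann function and Sormani's lemma, which your proposal omits entirely), pass to a non-collapsed pointed limit $(M,x_i)\to(X,x_\infty)$, use the small excess and the quantitative splitting Theorem~\ref{Quantitativesplitting} to split a line $X=\R\times Y$, and invoke Cheeger--Naber's codimension-$4$ theorem: since $\sup\Ric<\infty$ gives a two-sided bound in the limit and the singular set has dimension $\le0$, the product structure forces $Y$ (hence $X$) to be smooth; then volume convergence gives a contradiction. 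So the correct ingredient is the blow-up splitting plus codimension 4, applied with $\alpha=0$, not an $\epsilon$-regularity input at scales growing with $r_p(x)$.
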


Some remarks about Corollary \ref{Corrollaries-FinitTopType} are listed.

\begin{remark}\quad

	(1) It appears that the assumption regarding the infinity cone in Corollary \ref{Corrollaries-FinitTopType} (1) is quite restrictive. However, based on examples from \cite{Men00a}, we cannot remove this additional assumption. One important example that satisfies the conditions of Corollary \ref{Corrollaries-FinitTopType} (1) is the class of Ricci flat open $4$-manifolds with Euclidean volume growth, as shown by Cheeger-Naber's codimensional $4$ theorem (\cite{CN}).
	
	Indeed, Colding-Minicozzi (\cite{CoMi12}) proved that if a Ricci flat open manifold with Euclidean volume growth has one smooth infinity cone, then it has a unique infinity cone. Their proof also indicates such a manifold has finite topological type.

	(2)	In Corollary \ref{Corrollaries-FinitTopType} (2), if we replace the rewinding $(\delta(n),\rho)$-Reifenberg condition by $K_M\ge -C>-\infty$, then the case of $\D_p(r)=o(r^{\frac{1}{n}})$ recovers Theorem \ref{AGFiniteTpType} and $\frac{\vol(B_r(p))}{v_p(r)}=o (r^{1+\frac1n})$ is the case dealt with by \cite{SW93}.
		
	(3)	In \cite{Men00}, Menguy constructed a complete $4$-manifold with positive Ricci curvature and bounded diameter growth which has infinite topological type. Corollary \ref{Corrollaries-FinitTopType} (3) demonstrate that these examples must satisfy $\sup\Ric=+\infty$.

\end{remark}

We arrange this paper as follows. In section 2, we first introduce the relations between growth of excess function and various geometric quantities, which are applied to prove Proposition \ref{FiniteToplgType-VariousGrowthCondition}, then we recall Cheeger-Colding's almost splitting functions and prove that there exits an almost splitting function on a large annulus. Section 3 is devoted to glue the almost splitting functions on annuli by a partition of unity. In Section 4, we apply a transformation theorem for almost splitting maps to show the function constructed in Section 3 has no critical points outside a compact set, hence which finishes the proof of Theorem \ref{FiniteToplgType-Main}, and then we finish the proof of Corollary \ref{Corrollaries-FinitTopType}. In appendix, we give the proofs of Lemma \ref{LargeVolumeGrowth}, \ref{SmallVolumeGrowth} and a sketched proof of Theorem \ref{TransThm} for readers' convenience.

\section{Preliminary}

In this paper, we use $C(c_1,...,c_s)$ to denote some universal positive constants, only depending on parameters $c_1,...,c_s$. And $\Psi(x_1,..,x_k|c_1,...,c_s)$ to denote some universal positive functions satisfying $\Psi\to0$ as $x_1,...,x_k\to0$ when $c_1,...,c_s$ are fixed. Note that their specific values may change from line to line without specification if there is no ambiguity.

\subsection{Relations between Various Growth Conditions}

As mentioned in the introduction, among existing works, one indispensable tool in proving a finite topological type theorem of an open manifolds with nonnegative Ricci curvature is the Abresch-Gromoll's excess estimate, which allows that one estimates some distance relations in thin triangles only assuming non-negative Ricci curvature.

\begin{theorem}\cite{AG}\label{ExcessEstimateAG}
	Let $M$ be an open $n$-manifold with non-negative Ricci curvature. Let $\gamma$ be a minimal geodesic joining $p,q\in M$. Then for any $x\in M$, $$e_{p,q}(x)=d(x,p)+d(x,q)-d(p,q)\le8\left(\frac{h^n}{s}\right)^{\frac{1}{n-1}},$$where $h=d(x,\gamma)$, $s=\min\{d(x,p),d(x,q)\}$.
\end{theorem}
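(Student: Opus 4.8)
The statement to prove is the Abresch–Gromoll excess estimate (Theorem~\ref{ExcessEstimateAG}). Let me sketch a proof plan.

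\medskip

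The plan is to estimate the excess function $e_{p,q}(x) = d(x,p) + d(x,q) - d(p,q)$ by comparing the two Busemann-type functions $b_+(y) = d(y,p) - \text{(something)}$ against barrier functions. More precisely, first I would set $h = d(x,\gamma)$ and let $h_p(y) = d(y,p)$, $h_q(y) = d(y,q)$. The key point is that $e_{p,q} = h_p + h_q - d(p,q) \geq 0$ everywhere, it vanishes on $\gamma$, and one wants an upper bound at $x$. The strategy is to control each of $h_p$ and $h_q$ from above on the ball $B_h(x)$ using the Laplacian comparison theorem: since $\mathrm{Ric}\geq 0$, in the support sense $\lap h_p \leq \frac{n-1}{h_p}$, and similarly for $h_q$. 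One then compares $h_p$ (resp. $h_q$) with the radial solution $G$ of the ODE corresponding to equality in the Laplacian comparison on an annulus, using the maximum principle on a punctured ball around $x$.

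\medskip

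Concretely, the second step is to build a comparison function. On the ball $B = B_h(x)$ consider the function $G$ on $(0,h]$ solving $G'' + \tfrac{n-1}{t}G' = -1$ (or the sharp model inequality), normalized so that $G(h) = 0$; this gives $G(t) \sim c_n(h^2 - t^2)/\ldots$ but more importantly for the sharp exponent one takes the solution with $G'(h)$ adjusted so that $G$ dominates the excess on $\partial B_h(x)$. One checks $\lap G(d(\cdot,x)) \leq -1$ in the appropriate comparison sense, while $\lap e_{p,q} \leq \frac{n-1}{h_p} + \frac{n-1}{h_q} \leq \frac{2(n-1)}{s}$ where $s = \min\{d(x,p),d(x,q)\}$, using that on $B_h(x)$ both $h_p, h_q \geq s - h \geq s/2$ (one may assume $h \leq s/2$, else the claimed bound is trivially larger than the diameter-type bound, or handle it separately). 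Rescaling $G$ by $\frac{2(n-1)}{s}$ makes $\frac{2(n-1)}{s}G(d(\cdot,x))$ a supersolution: $\lap\big(\tfrac{2(n-1)}{s}G\big) \leq -\tfrac{2(n-1)}{s} \leq \lap e_{p,q}$ is the wrong direction, so instead one wants $G$ with $\lap G \ge$ something; let me restate: we want an upper barrier $\phi$ for $e_{p,q}$ on $B_h(x) \setminus \{x\}$ with $\lap \phi \leq \lap e_{p,q}$ in the barrier sense and $\phi \geq e_{p,q}$ on the boundary $\partial B_h(x) \cup$ near $x$. The radial barrier $\phi(y) = \frac{2(n-1)}{s}\,\psi(d(x,y))$ with $\psi$ chosen so that $\psi'' + \frac{n-1}{t}\psi' = 1$, $\psi(h) = $ value dominating $e_{p,q}$ on $\partial B_h(x)$, works: on $\partial B_h(x)$ one has a point $z$ on $\gamma$ at distance $h$ from $x$, so $e_{p,q}(x) \leq e_{p,q}(z) + 2h = 2h$ by the triangle inequality, hence $e_{p,q} \leq 2h + \text{osc}$ — one needs the boundary bound $e_{p,q}|_{\partial B_h(x)} \leq 4h$ say. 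Then the maximum principle (comparing at the point where $e_{p,q} - \phi$ is maximized, which cannot be interior by the Laplacian inequalities and cannot be at $x$ since $\psi$ blows up appropriately or is handled by the singular behavior) gives $e_{p,q}(x) \leq \phi(x) = \frac{2(n-1)}{s}\psi(0) + (\text{boundary term})$. Optimizing the free constant in $\psi$ (equivalently, one is free to use any $h' \le h$ and the estimate is best at a particular $h'$, but here $h$ is fixed) and computing $\psi(0) \sim \frac{h^2}{2n}$ yields $e_{p,q}(x) \leq C(n)\big(\frac{h^n}{s}\big)^{1/(n-1)}$ after the correct balancing — the exponent $\frac{1}{n-1}$ and the appearance of $h^n$ rather than $h^2$ come from choosing the comparison annulus radius optimally (using a sub-ball $B_{h'}(x)$ with $h'$ much smaller than $h$ is NOT it; rather the sharp estimate uses that one is free to pick the outer radius, but since $\partial B_h(x)$ must avoid $p,q$... ). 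Let me just say: the sharp exponent arises from optimizing the radius of the comparison ball.

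\medskip

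The main obstacle, and the technical heart, is getting the sharp exponent $\frac{1}{n-1}$ and the sharp combination $h^n/s$ rather than a cruder bound like $Ch^2/s$. This requires choosing the comparison region and the barrier function carefully — specifically, one should not use the full ball $B_h(x)$ but rather optimize over sub-balls, balancing the boundary contribution (of order the radius) against the interior Laplacian contribution (of order radius${}^2/s$), and the $n$-dimensional volume comparison enters to control the boundary values; the optimal radius turns out to be of order $(h^n/s)^{1/(n-1)} \cdot (\text{const})$ — wait, rather: the boundary of $B_\rho(x)$ for small $\rho$ does not meet $\gamma$, so $e_{p,q}$ on $\partial B_\rho(x)$ is controlled not by $2\rho$ but by something requiring the segment inequality / a more careful argument, and this is where $h$ (the actual distance to $\gamma$) re-enters. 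A second, more minor point to handle carefully is the non-smoothness of distance functions: all Laplacian inequalities must be interpreted in the barrier (Calabi) sense or support sense, and the maximum principle applied accordingly — this is standard but must be stated. I would also need to dispose of the trivial case where $h \geq c\, s$ separately, since then the claimed bound exceeds a constant times $s$ and $e_{p,q}(x) \le 2 h_p(x) + 2h_q(x)$-type bounds or direct triangle-inequality estimates suffice.
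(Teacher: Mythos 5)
The paper does not contain a proof of this theorem: it is cited directly from \cite{AG} (Abresch--Gromoll), so there is no ``paper's own proof'' to compare against. Evaluating your proposal on its own terms: you have identified the right ingredients (Laplacian comparison giving $\lap e_{p,q} \le \frac{2(n-1)}{s}$ near $x$ in the barrier sense, the radial comparison function solving $\psi'' + \frac{n-1}{t}\psi' = 1$, the maximum principle, and an optimization over a radius), but there is a genuine gap at the technical heart, and your final paragraph honestly flags it: you do not see clearly where the exponent $\frac{1}{n-1}$ comes from, and your guesses about what is being optimized are off target.

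Here is what is missing. One should take the barrier $\psi = \underline{L}_R$ with $\underline{L}_R(R)=0$, $\underline{L}_R' < 0$, and -- crucially -- $\underline{L}_R(t) \sim \tfrac{R^n}{n(n-2)}\,t^{2-n}$ blowing up as $t\to 0^+$ (this is exactly the function the paper itself writes down in the proof of Lemma~\ref{0108lem}). Set $\phi = c\,\underline{L}_R(d(x,\cdot))$ with $c$ the Laplacian bound. Then $\phi - e_{p,q}$ is subharmonic in the barrier sense on the annulus $B_R(x)\setminus \overline{B_\epsilon(x)}$, vanishes to $\le 0$ on the outer sphere, and -- this is the step you are missing -- at the point $y_0 \in \gamma$ with $d(x,y_0)=h$ it equals $c\,\underline{L}_R(h) > 0$, since $e_{p,q}(y_0) = 0$. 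The maximum principle therefore forces the existence of a point $w$ on the \emph{inner} sphere $\partial B_\epsilon(x)$ with $e_{p,q}(w) \le c\left(\underline{L}_R(\epsilon) - \underline{L}_R(h)\right)$, and the $2$-Lipschitz bound transfers this to $e_{p,q}(x) \le 2\epsilon + c\left(\underline{L}_R(\epsilon) - \underline{L}_R(h)\right)$. The optimization is over the \emph{inner} radius $\epsilon$, not the outer radius as you speculated: one balances the Lipschitz term $2\epsilon$ against the barrier term $c\,\underline{L}_R(\epsilon) \sim \frac{c R^n}{n(n-2)}\epsilon^{2-n}$, giving optimal $\epsilon \sim \left(\frac{h^n}{s}\right)^{1/(n-1)}$ (with $R$ a fixed multiple of $h$ and $c \sim s^{-1}$), whence the claimed bound. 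So the exponent $\frac{1}{n-1}$ arises precisely from the Green's-function-type blowup rate $\epsilon^{2-n}$, not from a choice of sub-ball or outer radius. Your remark that ``the boundary of $B_\rho(x)$ for small $\rho$ does not meet $\gamma$'' and that one therefore needs a segment inequality is a red herring: what is used is the vanishing of $e_{p,q}$ at the interior point $y_0$, not any control on boundary spheres.
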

A slight generalization of the Abresch-Gromoll's excess estimate refers to \cite{CC96}, where they use excess estimate to establish the quantitative splitting theorem (Theorem \ref{Quantitativesplitting} below).

In the study of an open manifold with non-negative Ricci curvature, in order to guarantee the above excess estimate, a common idea is to estimate the growth of the ray density function $\ray_p(r):=\sup_{x\in \partial B_r(p)}d(x,R_p\cap\partial B_r(p))$, where $R_p$ is the union of all rays starting from $p$, by certain diameter or volume growth conditions. In fact, according to Theorem \ref{ExcessEstimateAG}, one immediately has, 
\begin{equation}\label{RayDensityControlExcess}
e_p(x)\le 8\left(\frac{\ray_p(r_p(x))^n}{r_p(x)}\right)^{\frac{1}{n-1}}.
\end{equation}

Now we recall some effective geometric quantities growth conditions including diameter and volume, which are used to control the growth of the ray density function.

First we recall the properties of Euclidean volume growth. 

\begin{definition}[Euclidean volume growth]
	For $\nu>0$, we call an open $n$-manifold $M$ has $\nu$-Euclidean volume growth, if there exists $p\in M$ s.t.  
	\begin{equation*}\label{EuclideanVolumeGrowth}
		\liminf_{r\to\infty}\frac{\vol(B_r(p))}{r^n}=\nu.
	\end{equation*}
	
\end{definition}
By relative volume comparison, if $M$ has non-negative Ricci curvature, then the above limit always exists and does not depend on $p$. For open manifolds with non-negative Ricci curvature and Euclidean volume growth, the following theorem is fundamental.
\begin{theorem}(\cite{CC96})\label{Thm-VolumeCone}
	Let $M$ be an open manifold with non-negative Ricci curvature. If $M$ has Euclidean volume growth, then each infinity cone of $M$ is a metric cone.
\end{theorem}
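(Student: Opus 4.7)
The plan is to reduce the statement to the Cheeger--Colding volume-cone-implies-metric-cone rigidity, whose quantitative version is one of the main results of \cite{CC96}. Fix a sequence $r_i \to \infty$ and rescale the metric by $r_i^{-2}$, producing pointed spaces $(M, p, r_i^{-2} g)$. By Gromov's precompactness (Bishop--Gromov), along a subsequence these converge in the pointed Gromov--Hausdorff sense to some $(Y, y_0)$, which by definition is an infinity cone. The task is to show that every such $Y$ is a metric cone over some cross section.

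First I would use Bishop--Gromov monotonicity: since $\Ric \ge 0$, the ratio $\vol(B_r(p))/(w_n r^n)$ is non-increasing in $r$, and by the Euclidean volume growth assumption its limit as $r \to \infty$ equals $\nu/w_n > 0$. Consequently, for any fixed $R > 0$,
\begin{equation*}
\frac{\vol(B_{R r_i}(p))}{w_n (R r_i)^n} \longrightarrow \frac{\nu}{w_n},
\end{equation*}
and more importantly, for every $0 < r < R$ the rescaled ratios $\vol(B_{r r_i}(p))/(w_n (r r_i)^n)$ and $\vol(B_{R r_i}(p))/(w_n (R r_i)^n)$ differ by at most $o(1)$ as $i \to \infty$. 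Passing to the renormalized limit measure $\mu = \lim_{i} w_n^{-1} r_i^{-n} \vol_g$ on $(Y, y_0)$ (which exists along a subsequence by Colding's continuity of volume), I obtain
\begin{equation*}
\frac{\mu(B_R(y_0))}{w_n R^n} \equiv \frac{\nu}{w_n} \qquad \text{for every } R > 0.
\end{equation*}
That is, the volume ratio on $Y$ is constant in $R$.

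Next I would invoke the Cheeger--Colding volume cone rigidity: in the smooth setting it says that if, on a sequence of manifolds with $\Ric \ge -(n-1)\epsilon_i$, the ratio $\vol(B_R)/\vol(B_r)$ at a point is $(1-\epsilon_i)$-close to the Euclidean ratio for two radii $0 < r < R$, then the annulus $A_{r,R}$ is Gromov--Hausdorff close to a Euclidean annulus with controlled constants. Applying this to every pair $0 < r < R$ on $Y$ (with $\epsilon_i \to 0$, which is permissible since the rescaled curvature bound is $\Ric \ge 0$), one deduces that every annulus $A_{r,R}(y_0) \subset Y$ is isometric to a metric annulus over the same cross section, and a direct gluing then identifies $Y$ with the Euclidean cone over this cross section.

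The main obstacle I expect is the technical passage from the constancy of the volume ratio on $M$ (in the limiting sense along the rescaling) to the rigid conclusion on the singular limit space $Y$. One has to be careful that the renormalized limit measure behaves well under Gromov--Hausdorff convergence, and that the rigidity statement is genuinely quantitative (allowing $\epsilon_i \to 0$) so that one can take limits. Both issues are resolved in \cite{CC96} via the existence of almost-linear harmonic functions whose Hessian is small in $L^2$, which in our setting would be produced on large annuli (as in Section~2 of the paper) and then upgraded via cut-off and integration to yield an isometric radial structure on $Y$.
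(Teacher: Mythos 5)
The paper does not prove this statement: Theorem~\ref{Thm-VolumeCone} is cited directly from \cite{CC96} and used as a black box, so there is no in-paper argument to compare against. Your sketch is a correct outline of the standard Cheeger--Colding proof: Bishop--Gromov monotonicity forces the rescaled volume ratios to converge to the constant $\nu/w_n$ at every scale, Colding's volume convergence (non-collapsing is guaranteed by $\nu>0$) transfers this constancy to the renormalized limit measure on the blowdown $(Y,y_0)$, and the quantitative ``almost volume cone implies almost metric cone'' rigidity, applied with $\epsilon_i\to 0$, shows that every annulus $A_{r,R}(y_0)\subset Y$ is isometric to an annulus in a cone over a fixed cross section, whence $Y$ is a metric cone.

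One minor inaccuracy in the final paragraph: the technical engine behind the volume-cone rigidity in \cite{CC96} is not the almost-linear harmonic functions (those are the device for the almost-splitting theorem), but rather an $L^2$-Hessian estimate for the approximate distance-squared function, showing $\hess(\tfrac12 r^2)$ is close to $g$ on annuli; this is what produces the radial/warped-product structure. This does not affect the correctness of your high-level argument, only the attribution of which mechanism in \cite{CC96} carries the load.
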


The following lemma tells that one may use the rate of Euclidean volume growth to control ray density function. For reader's convenience, we give its proof in appendix.

\begin{lemma}[Large volume growth, \cite{Shen96,OSY}]\label{LargeVolumeGrowth}
	Let $(M,p)$ be an open $n$-manifold with non-negative Ricci curvature and $\nu$-Euclidean volume growth. Then for large $r$, $$\ray_p(r)\le C(n) (\nu^{-1})^{\frac{1}{n-1}}\left(\frac{\vol(B_r(p))}{r^n}-\nu\right)^{\frac1{n-1}}r.$$
\end{lemma}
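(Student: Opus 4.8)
The plan is to estimate, for $x\in\partial B_r(p)$, the distance $\sigma(x):=d(x,R_p)$ to the union of rays from $p$, and then to deduce the stated bound. This reduction is harmless: if $\gamma$ is any ray from $p$ and $\gamma(t_0)$ is its point closest to $x$, then $|t_0-r|=|d(p,\gamma(t_0))-d(p,x)|\le d(x,\gamma(t_0))$, so $d(x,\gamma(r))\le 2\,d(x,\gamma(t_0))$; taking the infimum over rays gives $d\bigl(x,R_p\cap\partial B_r(p)\bigr)\le 2\,\sigma(x)$. Hence it suffices to prove that, for all large $r$ and all $x\in\partial B_r(p)$,
\begin{equation*}
\sigma(x)^{\,n-1}\ \le\ C(n)\,\nu^{-1}\Bigl(\tfrac{\vol(B_r(p))}{r^n}-\nu\Bigr)\,r^{\,n-1},
\end{equation*}
i.e.\ that the presence of a point at distance $\sigma:=\sigma(x)$ from every ray forces the volume deficit $\vol(B_r(p))-\nu r^n$ to be at least $c(n)\,\nu\,\sigma^{\,n-1}r$.

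Fix $x$ and a unit-speed minimizing geodesic $c:[0,r]\to M$ from $p$ to $x$. The core estimate is that the \emph{whole} geodesic $c$ stays away from the rays in proportion to its radius: there is a universal $c_0(n)\in(0,1)$ with
\begin{equation*}
d\bigl(c(t),R_p\bigr)\ \ge\ c_0\,\sigma\,\tfrac{t}{r}\qquad(0\le t\le r).
\end{equation*}
The idea is that if some ray $\gamma$ came $c_0\sigma t/r$-close to $c(t)$, then, since $d(p,c(t))=t$ while $d(p,x)=r$, Abresch--Gromoll's excess estimate (Theorem~\ref{ExcessEstimateAG}) applied in the thin triangle $p,\,x,\,\gamma(L)$ (with $L\to\infty$) would pin $\gamma$ and $c$ together tightly enough near radius $r$ to contradict $d(x,R_p)=\sigma$, whereas a bare triangle inequality only handles $t$ within $O(\sigma)$ of $r$. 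This is exactly where non-negative Ricci curvature, through the excess estimate, substitutes for a Toponogov angle comparison, and I expect this propagation of the ``angular deficit'' $\asymp\sigma/r$ from radius $r$ inward to be the main obstacle.

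Granting the core estimate, I would finish with a volume count. The balls $B_{\rho(t)}(c(t))$ with $\rho(t)=\tfrac12 c_0\sigma t/r$ lie in $B_r(p)$, avoid $R_p$, and are pairwise disjoint once their centres are spaced $\gtrsim c_0\sigma$ apart; picking $\asymp r/\sigma$ of them with centres $c(t_i)$ as $t_i$ ranges over a length-$\asymp r$ subinterval of $[0,r]$ on which $\rho(t_i)\ge c(n)\sigma$ yields a disjoint family with each $\vol\bigl(B_{\rho(t_i)}(c(t_i))\bigr)\ge c(n)\,\nu\,\rho(t_i)^n$ by Bishop--Gromov relative volume comparison (using $\vol(B_r(p))\ge\nu r^n$), hence total volume $\ge c'(n)\,\nu\,\sigma^{\,n-1}r$ in a region missing all rays. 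It then remains to see that this volume is genuinely additional to $\nu r^n$, i.e.\ that $B_r(p)$ minus this ``off-ray tube'' still has volume $\ge\nu r^n$; this holds because the complementary region is, up to the tube, the $\exp_p$-image of the directions pointing almost along rays, which by the monotonicity $\vol(\partial B_t(p))\ge n\nu t^{\,n-1}$ (equivalently, by comparison with the asymptotic metric cone, Theorem~\ref{Thm-VolumeCone}) already accounts for $\nu r^n$. Rearranging $\vol(B_r(p))\ge\nu r^n+c'(n)\nu\sigma^{\,n-1}r$ and taking the supremum over $x\in\partial B_r(p)$ gives the claim; this last ``core versus tube'' bookkeeping is the second point needing care, carried out via relative volume comparison as in \cite{Shen96,OSY}.
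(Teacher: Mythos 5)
Your proposal takes a genuinely different, and substantially more elaborate, route than the paper, and as written it has two real gaps.

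The paper's proof is a one-ball argument: fix $x\in\partial B_r(p)$ and $h>0$ with $B_h(x)\cap R_p=\emptyset$, note that $B_h(x)$ and $R_p(r-h,r+h)$ are \emph{disjoint} subsets of the annulus $A_{r-h,r+h}(p)$, and add up their volumes. Bishop--Gromov gives $\vol(B_h(x))\ge\nu h^n$; the monotonicity of the ray-region volume ratio (\cite[Lemma 4]{OSY}) gives $\vol(R_p(r-h,r+h))\ge\nu\bigl((r+h)^n-(r-h)^n\bigr)$; and relative volume comparison bounds $\vol(A_{r-h,r+h}(p))$ above by $\tfrac{\vol(B_r(p))}{r^n}\bigl((r+h)^n-(r-h)^n\bigr)$. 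Subtracting and using $(r+h)^n-(r-h)^n\le 2nh(r+h)^{n-1}\le C(n)hr^{n-1}$ immediately yields $h^{n-1}\le C(n)\nu^{-1}\bigl(\tfrac{\vol(B_r(p))}{r^n}-\nu\bigr)r^{n-1}$. No tube around the geodesic $c$ from $p$ to $x$, no propagation of the ray-distance, and no excess estimate at all are needed; the whole content is that $B_h(x)$ fits in the annulus alongside the ray region.

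As for your route, the step you flag yourself as ``the main obstacle'' --- the core estimate $d(c(t),R_p)\ge c_0\,\sigma\,t/r$ for all $0\le t\le r$ --- is indeed the crux and is left unproved. The excess estimate, applied to the thin triangle $p$, $c(t)$, $\gamma(L)$, controls the Busemann value $b_\gamma(c(t))$ (a \emph{longitudinal} quantity) up to an error $\Psi$; it does not directly convert into a lower bound on the \emph{transverse} distance $d(c(t),\gamma)$, which is what you need to propagate. This is exactly the Toponogov-type angle information that non-negative Ricci curvature alone does not give, and it is not clear how to close it by the argument you sketch. Secondly, your ``core versus tube'' bookkeeping appeals to $\vol(\partial B_t(p))\ge n\nu t^{n-1}$, but that inequality concerns the whole distance sphere and does not by itself show that the volume of your off-ray tube is \emph{in addition to} $\nu r^n$. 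What is actually needed there is the monotonicity of the \emph{ray-region} volume ratio, $\vol(R_p(0,r))\ge\nu r^n$ (again \cite[Lemma 4]{OSY}); once you invoke that, however, you are already holding the ingredient that makes the paper's one-ball argument work directly, and the tube construction becomes unnecessary.
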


Next we discuss the diameter case. The most natural diameter growth function is $\diam (\partial B_r(p))$, $r>0$. Obvious that $\ray_p(r)\le \diam (\partial B_r(p))$. However, sometimes, it's not easy to control this extrinsic diameter by volume. In \cite{AG}, Abresch-Gromoll introduced a modified diameter growth function, $\D^{AG}_p(r)$, and in \cite{Ch91}, Cheeger introduced a so-called essential diameter growth function, $\D^{Ch}_p(r)$. In this paper, we adopt the following definition introduced in \cite{SW93}.

\begin{definition}[Essential diameter of ends]\label{DefEssentialDiameter}
	Let $C(p,r)$ be the union of unbounded components of $M\backslash \overline{B_{r}(p)}$. Put
	$$\D_p (r)=\sup\diam(\Sigma_r),$$where the superemum is taken over all connected components, $\Sigma_r$, of boundaries $\partial C(p,r)$, which satisfies $\Sigma_r\cap R_p\neq\emptyset$, and the diameter of $\Sigma_r$ is measured with respect to extrinsic distance of $M$. 
\end{definition}
If one drops the restriction that $\Sigma_r\cap R_p\neq\emptyset$, then the above definition gives $\D^{Ch}_p(r)$. In general, it is obvious that $\D_p(r)\le\D^{Ch}_p(r)\le \diam(\partial B_r(p))$ and $\D^{Ch}_p(r)\le \D^{AG}_p(r)$. In the case of non-negative Ricci curvature, \cite[Proposition 4.3]{AG} asserts that, the boundary of each component of $M\backslash B_r(p)$ is connected, consequently, $\D_p(r)=\D^{Ch}_p(r)$. 

The next lemma, whose proof we put in appendix for readers' convenience, states that one may use small volume growth to control essential diameter growth.

\begin{lemma}[Small volume growth, \cite{SW93}]\label{SmallVolumeGrowth}
	Let $(M,p)$ be an open $n$-manifold with non-negative Ricci curvature. Then $\D_p (r)\le\frac{4}{v_p(r)}\vol(A_{r-2,r+2}(p))$.
\end{lemma}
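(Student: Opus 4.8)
The plan is to run the standard ball–packing argument on a boundary component. Fix a connected component $\Sigma_r$ of $\partial C(p,r)$ (the argument below never uses the condition $\Sigma_r\cap R_p\neq\emptyset$, so it in fact bounds $\D^{Ch}_p(r)$ as well). Since $C(p,r)$ is a union of connected components of the open set $M\setminus\overline{B_r(p)}$, we have $\partial C(p,r)\subset\partial B_r(p)$, hence $\Sigma_r\subset\partial B_r(p)$; in particular $\Sigma_r$ is a closed subset of the compact set $\partial B_r(p)$, and $v_p(r)>0$ (the continuous positive function $x\mapsto\vol(B_1(x))$ attains a positive minimum on $\partial B_r(p)$). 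It remains to bound $\diam(\Sigma_r)=\sup\{d(x,y):x,y\in\Sigma_r\}$, the extrinsic diameter.

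First I would choose a maximal collection of points $p_1,\dots,p_N\in\Sigma_r$ whose unit balls $B_1(p_1),\dots,B_1(p_N)$ are pairwise disjoint. As each $p_i$ lies on $\partial B_r(p)$, the triangle inequality gives $B_1(p_i)\subset A_{r-1,r+1}(p)\subset A_{r-2,r+2}(p)$, while $\vol(B_1(p_i))\ge v_p(r)$ by definition of $v_p$. Disjointness and additivity of volume then force
\[
N\,v_p(r)\le\vol\bigl(A_{r-2,r+2}(p)\bigr),
\]
so $N<\infty$ and $N\le v_p(r)^{-1}\vol(A_{r-2,r+2}(p))$. Maximality means that for every $q\in\Sigma_r$ the ball $B_1(q)$ meets some $B_1(p_i)$, i.e.\ $d(q,p_i)<2$; hence $\Sigma_r\subset\bigcup_{i=1}^N B_2(p_i)$.

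Next I would use the connectedness of $\Sigma_r$ to chain the centers together. Form the graph $G$ on vertices $\{1,\dots,N\}$ with an edge between $i$ and $j$ whenever $d(p_i,p_j)<4$. If $G$ were disconnected, say $\{1,\dots,N\}=I\sqcup J$ with no edges between $I$ and $J$, then $\Sigma_r\cap\bigcup_{i\in I}B_2(p_i)$ and $\Sigma_r\cap\bigcup_{j\in J}B_2(p_j)$ would be nonempty, open in $\Sigma_r$, cover $\Sigma_r$, and be disjoint (a common point $q$ would give $i\in I$, $j\in J$ with $d(p_i,p_j)\le d(p_i,q)+d(q,p_j)<4$, an edge), contradicting connectedness of $\Sigma_r$. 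So $G$ is connected, and any two of its vertices are joined by a path of at most $N-1$ edges. Given $x,y\in\Sigma_r$, pick $i,j$ with $x\in B_2(p_i)$, $y\in B_2(p_j)$ and a path $p_i=p_{i_0},p_{i_1},\dots,p_{i_m}=p_j$ in $G$ with $m\le N-1$; then
\[
d(x,y)\le d(x,p_i)+\sum_{l=0}^{m-1}d(p_{i_l},p_{i_{l+1}})+d(p_j,y)<2+4m+2\le 4N.
\]
Therefore $\diam(\Sigma_r)\le 4N\le\frac{4}{v_p(r)}\vol(A_{r-2,r+2}(p))$, and taking the supremum over all admissible components $\Sigma_r$ gives the claim.

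The only genuinely delicate point is the passage from topological connectedness of $\Sigma_r$ to connectedness of the combinatorial graph $G$ (equivalently, checking that the chosen $2$-balls overlap enough to transport the connectedness along the chain), together with keeping track of constants so that the final estimate comes out exactly $\tfrac{4}{v_p(r)}\vol(A_{r-2,r+2}(p))$; everything else is the routine relative-volume/packing bookkeeping.
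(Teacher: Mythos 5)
Your proof is correct and uses the same packing-plus-chaining strategy as the paper: a maximal family of disjoint unit balls centred on $\Sigma_r$ gives $N\le v_p(r)^{-1}\vol(A_{r-2,r+2}(p))$, and then the doubled balls $B_2(p_i)$ are chained together to bound the extrinsic diameter by $4N$. Where you differ is in how the chaining is carried out. The paper takes an explicit continuous curve $\gamma:[0,1]\to\Sigma_r$ between two points and discretises it by repeatedly taking the supremum of parameter values for which $\gamma$ stays in a fixed $2$-ball; this quietly uses path-connectedness of $\Sigma_r$. You instead build the intersection graph on the centres $\{p_1,\dots,p_N\}$ (edge iff $d(p_i,p_j)<4$) and deduce connectedness of that graph directly from the topological connectedness of $\Sigma_r$ by an open-cover/separation argument, then walk along a graph path of length at most $N-1$. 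Your version is slightly cleaner on this point: $\Sigma_r$ is a closed subset of $\partial B_r(p)$ and is not obviously a manifold, so path-connectedness is not automatic from connectedness; the graph argument sidesteps this entirely. Both arguments produce exactly the same constants, so the difference is a technical refinement rather than a new idea. One small remark: your final chain of inequalities gives $d(x,y)<2+4m+2$ with $m\le N-1$, i.e.\ the strict bound $d(x,y)<4N$; this of course yields $\diam(\Sigma_r)\le 4N$ after taking the supremum, which is all that is claimed.
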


In the case of small linear growth of $\D_p(r)$, $\D_p(r)$ is almost the same as $\diam (B_r(p))$. That is the Lemma \ref{SmallEsentialDiamterImpliesSmallExcess_Copy}. For convenience of readers, we restate it below.

\begin{lemma}\label{SmallEsentialDiamterImpliesSmallExcess}
	For any $n,\epsilon>0$, there exists $\delta(n,\epsilon)>0$, satisfying the following properties. Let $(M,p)$ be an open $n$-manifold with non-negative Ricci curvature. If
	\begin{equation}\label{SmallEsentialDiamter}
		\limsup_{r\to\infty}\frac{\D_p(r)}{r}\le\delta(n,\epsilon),
	\end{equation}
	then $M$ is isometric to $\R\times N$ where $N$ is compact, or 
	\begin{equation}\label{DiameterRatio}
		\limsup_{r\to\infty}\frac{\diam (\partial B_r(p))}{\D_p(r)}\le 1+\epsilon.
	\end{equation}
	
\end{lemma}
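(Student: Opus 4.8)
The plan is to argue by contradiction, passing to a limit space via Gromov–Hausdorff convergence of rescalings. Suppose the statement fails for some fixed $n$ and $\epsilon>0$. Then there is a sequence of pointed open $n$-manifolds $(M_i,p_i)$ with $\Ric\ge0$, none of which splits off an $\R$-factor with compact complementary factor, such that $\limsup_{r\to\infty}\D_{p_i}(r)/r\le 1/i$, yet $\limsup_{r\to\infty}\diam(\partial B_r(p_i))/\D_{p_i}(r)>1+\epsilon$. For each $i$ pick a scale $R_i\to\infty$ (along a subsequence realizing the $\limsup$) and rescale the metric so that $\D_{p_i}(R_i)$ becomes of unit order; equivalently work with $(M_i, R_i^{-1}g_i, p_i)$. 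On the rescaled manifolds the essential diameter of the ends at the unit scale is $\le C/i\to0$, while the extrinsic diameter of the unit sphere $\partial B_1(p_i)$ exceeds $(1+\epsilon)$ times the essential diameter at that scale. The key geometric input is the Abresch–Gromoll excess estimate, Theorem \ref{ExcessEstimateAG} (equivalently the ray-density bound \eqref{RayDensityControlExcess}), which forces the excess function to be small at the unit scale once $\ray_{p_i}$ is small there; since $\ray_{p_i}(r)\le\D_{p_i}(r)$, smallness of $\D_{p_i}$ at scale $R_i$ gives $e_{p_i}\le\Psi(1/i\,|\,n)$ on the relevant annulus.

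First I would extract a Gromov–Hausdorff limit $(X,p_\infty)$ of a subsequence of the rescaled spaces; by the non-negative Ricci curvature and non-collapsing at the unit scale (which one gets from the volume-doubling that $\D_{p_i}$-smallness entails, or else one handles the collapsed case separately), $X$ is a limit space of manifolds with $\Ric\ge0$. The smallness of $\D_{p_i}$ at all large scales in $M_i$, together with the excess estimate, should show that in the limit the distance function $r_{p_\infty}$ has vanishing excess: every point lies on a ray from $p_\infty$ and the Busemann-type function agrees with $r_{p_\infty}$. Then I would invoke the Cheeger–Gromoll / Cheeger–Colding splitting theorem: a limit of manifolds with $\Ric\ge0$ containing a line splits isometrically as $\R\times Y$. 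The vanishing of the excess along rays produces a line through $p_\infty$ (two opposite rays concatenating to a line), so $X\cong\R\times Y$, and iterating, the cross-sections $\partial B_r(p_\infty)$ are all isometric to a fixed compact $Y$ (compact because $\D_{p_\infty}\equiv0$ bounds their diameter). This is incompatible with $\diam(\partial B_1(p_\infty))\ge(1+\epsilon)\,\D_{p_\infty}(1)$ unless the bad inequality degenerates — more precisely, $\diam$ and $\D$ of $\partial B_r(p_\infty)$ coincide in the limit, contradicting the strict inequality carried over from the $M_i$.

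The remaining point is to rule out (or rather, to account for) the genuinely split case: if along the way $M_i$ itself already splits as $\R\times N_i$ with $N_i$ compact, that is exactly the first alternative in the statement, so we may assume it does not, and then the limit splitting has to come from the soul/line being "at infinity", which is precisely what lets us conclude \eqref{DiameterRatio} for $M_i$ with large $i$ rather than an exact isometric splitting of $M_i$. Concretely, once the limit $X$ splits and $Y$ is the common cross-section, standard Gromov–Hausdorff stability of the annuli $A_{r-2,r+2}(p_i)$ transfers the near-equality $\diam(\partial B_r(p_i))\approx\D_{p_i}(r)$ back to large but finite $i$ and large $r$, yielding \eqref{DiameterRatio} and the desired contradiction.

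The main obstacle I anticipate is the collapsing issue: a priori the rescaled spaces $(M_i,R_i^{-1}g_i,p_i)$ could collapse, and then the limit is a lower-dimensional Ricci limit space where one must be careful that "ray", "excess", and the splitting theorem still behave correctly; one needs to check that $\D_{p_i}$-smallness gives enough non-collapsing (via Bishop–Gromov and the fact that an annulus of small essential diameter but definite volume growth cannot be too thin), or else run the splitting argument directly in the collapsed limit, which still has $\Ric\ge0$ in the Cheeger–Colding sense and still admits the splitting theorem. A secondary technical point is making the transfer of the strict inequality \eqref{DiameterRatio} between $\diam$ and $\D$ precise under Gromov–Hausdorff convergence, since $\D$ involves only components of $\partial C(p,r)$ meeting $R_p$; here one uses that for $\Ric\ge0$ these boundaries are connected (by \cite[Proposition 4.3]{AG}, as recalled above), so $\D_p(r)=\D^{Ch}_p(r)$ and the combinatorics of ends does not interfere with the limiting argument.
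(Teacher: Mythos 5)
Your proposal diverges structurally from the paper's proof. The paper first invokes the Cheeger--Gromoll splitting theorem to write $M\cong\R^k\times N$ and rules out $k\ge2$ and ($k=1$ with $N$ noncompact) directly from (\ref{SmallEsentialDiamter}), which is what produces the dichotomy with the first alternative $M\cong\R\times N$, $N$ compact. It then handles the remaining no-line case $k=0$ inside a \emph{single fixed} $M$: for a ray $\gamma$ and an arbitrary $x\in\partial B_r(p)$ it tracks the closest-approach point $\omega(s)$ of a minimal geodesic from $x$ to $\gamma(r)$, and the only Gromov--Hausdorff blow-up is local, in Sublemma \ref{NonBranchLemma}, performed at the scale $D(\rho)\approx\D_p(\rho)$ and rebased at $\omega(s)$.

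The decisive gap in your approach is the rescaling step. You write ``rescale the metric so that $\D_{p_i}(R_i)$ becomes of unit order; equivalently work with $(M_i,R_i^{-1}g_i,p_i)$,'' but these rescalings are not equivalent. Under $R_i^{-1}g_i$ the hypothesis $\D_{p_i}(R_i)/R_i\to0$ forces \emph{both} $\D_{p_i}$ and $\diam(\partial B_1(p_i))$ to tend to $0$ on the rescaled spaces, so $\partial B_1(p_\infty)$ collapses to a single point in your limit $X$ and the ratio $\diam/\D$ --- precisely the quantity you must control --- becomes a $0/0$ indeterminate form that the GH limit cannot see. The rescaled inequality $\diam(\partial B_1(p_i))>(1+\epsilon)\D_{p_i}(1)$ degenerates to $0\ge0$ and yields no contradiction, and the limit splitting $X\cong\R\times Y$ you extract has $\diam(\partial B_r(p_\infty))=\D_{p_\infty}(r)=0$, so ``transferring near-equality back'' has no content. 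The scale at which the comparison \emph{is} visible is $\D_{p_i}(R_i)$ itself --- exactly the scale $D(\rho)$ used in the paper's sublemma --- but rescaling by $\D_{p_i}(R_i)^{-1}$ sends $p_i$ to infinity, requires rebasing on $\partial B_{R_i}(p_i)$, and changes the analysis in a way your sketch does not supply. Two further issues: the dichotomy is asserted, not derived (the negation of the lemma is not ``$M_i$ does not split and (\ref{DiameterRatio}) fails''; you need the Cheeger--Gromoll case analysis to see that non-splitting plus (\ref{SmallEsentialDiamter}) means $M$ has no lines at all), and non-collapsing does \emph{not} follow from $\D_{p_i}$-smallness --- a product $\R\times N$ with $N$ tiny has small essential diameter and is collapsed --- so the fallback ``handle the collapsed case separately'' is doing real, unaddressed work.
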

\begin{proof}
	By Cheeger-Gromoll's splitting theorem, $(M,p)$ is isometric to $(\R^k\times N,(0^k,p_N))$ where $N$ contains no lines.
	
	Case 1: If $k\ge2$, let $u,v$ be orthonormal vectors in $\R^k$. Then for each $r>0$, $((u\cos s+v\sin s)r,p_N)$, $s\in[0,\frac\pi2]$, is a shortest geodesic in $\partial B_r(p)$ joining $(ur,p_N)$ and $(vr,p_N)$. Hence $\D_p(r)\ge\frac\pi2 r$ which is a contradiction to (\ref{SmallEsentialDiamter}) provided $\delta(n,\epsilon)<\frac12$.

	Case 2: If $k=1$ and $N$ is not compact. Let $\gamma$ be a ray emanating from $p$ with unit speed which is parallel to the $N$-factor. Then $\gamma$ and the $\R$-factor span a half plane which will yield a contradiction as in case 1.
	
	Case 3: The remain is to show if $M$ contains no lines, then (\ref{DiameterRatio}) holds. Let $\gamma$ be a ray emanating from $p$. Assume there exists $x\in\partial B_{r}(p)$. Join a minimal geodesic $\omega$ with unit speed from $x$ to $\gamma(r)$. Let $$\rho:=d(\omega(s),p):=\min_{t\in[0,d(x,\gamma(r))]}\{d(\omega(t),p)\}.$$ 
	Note that $\rho\le r$.

	For a ray $\gamma$, let $\Sigma_\gamma(r)$ be the connected component of $\partial C(p,r)$ containing $\gamma(r)$. Join a shortest geodesic $\alpha$ with unit speed from $p$ to $x$. According to \cite[Proposition 4.3]{AG} (see also \cite{Sor01}), the boundary of the component of the complement $M\backslash B_{\rho}(p)$ intersecting $\gamma$ must be connected. This shows $\omega(s),\alpha(\rho)\in\Sigma_{\gamma}(\rho)$. Letting $D(\rho)=\max_{y\in \Sigma_{\gamma}(\rho)}\{d(y,\gamma(\rho))\}$, so $$\max\{d(\omega(s),\gamma(\rho)),d(\alpha(\rho),\gamma(\rho))\}\le D(\rho).$$
	
	Then,
	\begin{equation}\label{r-rho}
		d(x,\gamma(r))\le2(r-\rho)+d(\alpha(\rho),\gamma(\rho))\le 2(r-\rho)+D(\rho).
	\end{equation}

	Now put $(\bar M,\bar q,\bar d)=(M,\omega(s),\frac{1}{D(\rho)}d)$. Since $M$ contains no lines, $\rho\to\infty$ as $r\to\infty$; otherwise, a contradicting argument shows that there exists a sequence of $\omega_i$ sub-converges to a line. So for $r$ large enough, we may assume $\rho$ is sufficiently large s.t., $\frac{\D_p(\rho)}{\rho}\le 2\delta$ by (\ref{SmallEsentialDiamter}). Now we have, $$\bar d(\bar q,p)=\frac{\rho}{D(\rho)}\ge (2\delta)^{-1},\quad \bar d(\bar q,\gamma(2\rho))\ge \frac{\rho}{D(\rho)}\ge (2\delta)^{-1},$$
	$$\bar d(\bar q,\gamma(\rho))\le1.$$ By Sublemma \ref*{NonBranchLemma}, 
	\begin{equation}\label{Estimate(r-rho)}
		r-\rho\le\Psi(\delta|n)D(\rho).
	\end{equation}
	Combining inequality (\ref{r-rho}) and (\ref{Estimate(r-rho)}) yields 
	\begin{equation}\label{D(rho)}
		d(x,\gamma(r))\le (1+\Psi(\delta|n))D(\rho).
	\end{equation}
	
	For $t>>2r$, let $y\in\Sigma_{\gamma}(\rho)$ s.t. $d(y,\gamma(\rho))=D(\rho)$. Let $y_t\in \overline {y,\gamma(t)}\cap \partial B_{r}(p)$, where $\overline {y,\gamma(t)}$ is a shortest geodesic from $y$ to $\gamma(t)$. When $r$ is large, $D(\rho)\le\D_p(\rho)\le2\delta\rho$, so by excess estimate,
	$$d(y,p)+d(y,\gamma(t))-d(p,\gamma(t))\le8\left(\frac{D(\rho)^n}{\rho}\right)^{\frac{1}{n-1}}\le 8(2\delta)^{\frac{1}{n-1}}D(\rho),$$  which yields, 
	\begin{align*}
		&d(y,y_t)\\=&d(y,\gamma(t))+d(y,p)-d(p,\gamma(t))-(d(y_t,\gamma(t))+d(y_t,p)-d(p,\gamma(t)))+d(y_t,p)-d(y,p)\\\le&2(d(y,\gamma(t))+d(y,p)-d(p,\gamma(t)))+r-\rho\\\le& r-\rho+16(2\delta)^{\frac{1}{n-1}}D(\rho)\le \Psi(\delta|n)D(\rho).
	\end{align*}
	Then
	$$d(y_t,\gamma(r))\ge d(y,\gamma(\rho))- d( y,y_t)- d(\gamma(\rho),\gamma(r))\ge (1-\Psi(\delta|n))D(\rho),$$ so
	\begin{equation}\label{EstimateOfD_p(r)}
		\D_p(r)\ge d(y_t,\gamma(r))\ge(1-\Psi(\delta|n))D(\rho).
	\end{equation}
	
	Now combining (\ref{D(rho)}) and (\ref{EstimateOfD_p(r)}) completes the proof.

	What left is to prove the following sublemma. For points $x,y\in M$, we use $\overline{x,y}$ to denote a shortest geodesic from $x$ to $y$.
	\begin{sublemma}\label{NonBranchLemma}
		Let $M$ be an open $n$-manifold with non-negative Ricci curvature. Suppose, for $x,y,p,q\in M$, $$d(x,p)\ge\delta^{-1},\quad d(x,q)\ge\delta^{-1},$$ $$d(x,\overline{p,q})\le1.$$If there exist $y\in M$, $z\in\overline{p,q}$, s.t. $d(p,y)=d(p,z)$, $x\in\overline{y,z}$, $d(p,x)\le d(p,\overline{y,z})$, then $d(p,z)-d(p,x)\le\Psi(\delta|n)$.
		
	\end{sublemma}

	It's by a contradicting argument; Assume $\{M_i\}$ is a sequence of open manifolds with non-negative Ricci curvature and for $x_i,y_i,z_i,p_i,q_i\in M_i$, $$d(x_i,p_i)\ge\delta_i^{-1},\quad d(x_i,q_i)\ge\delta_i^{-1},\quad \delta_i\to0,$$ $$d(x_i,\overline{p_i,q_i})\le1,$$ $$z_i\in\overline{p_i,q_i},\quad x_i\in\overline{y_i,z_i},$$ $$d(p_i,x_i)\le d(p_i,\overline{y_i,z_i}),$$ $$d(p_i,y_i)=d(p_i,z_i),$$ and $$d(p_i,z_i)-d(p_i,x_i)>\eta.$$
	
	Put $h_i(\cdot)=d(\cdot,p_i)-d(x_i,p_i)$.
	By continuity of $h_i$, there exists $z_i'\in\overline{x_i,z_i}$ s.t. $h_i(z_i')=\eta$. Note that $d(z_i',x_i)\ge \eta$.
	
	We claim there exists $R>0$, s.t. $d(x_i,z_i')\le R$. If not, up to a subsequence, $d(x_i,z_i)\ge d(x_i,z_i')\to\infty$. By excess estimate, $$ d(x_i,p_i)+d(x_i,z_i)-d(p_i,z_i)\to0.$$	
	Hence, $$0\le d(x_i,z_i')-\eta=d(x_i,p_i)+d(x_i,z_i')-d(p_i,z_i')\le d(x_i,p_i)+d(x_i,z_i)-d(p_i,z_i) \to0$$ which is a contradiction to the contradicting assumption $d(x_i,z_i')\to\infty$. Hence the claim that there exists $R>0$, s.t. $d(x_i,z_i')\le R$ follows.
	
	Now passing to a subsequence, according to quantitative splitting theorem \ref{Quantitativesplitting} below, we assume $(M_i,x_i,z_i')\GH(\R\times X,x_0,z_0)$ and $\overline{y_i,z_i}$ converges to a geodesic $\omega$ passing through $x_0$ and $z_0$ with length at least $\eta$. where $\GH$ means Gromov-Hausdorff convergence. And by the proof of Theorem \ref{Quantitativesplitting}, $h_i$ converges to $h_0:(\R\times X,x_0)\to(\R,0)$ the standard projection. Specially $h_0(z_0)=\lim_{i}h_i(z_i')=\eta>0$, $h_0(x_0)=0$ and $h_0(\omega(t))=\lim_ih_i(\overline{y_i,z_i}(t))\ge0$. By the product structure, we conclude that one end of $\omega$ is $x_0$. This implies the $d(x_i,y_i)\to0$ which is impossible since $d(x_i,y_i)\ge d(y_i,p_i)-d(x_i,p_i)=d(z_i,p_i)-d(x_i,p_i)>\eta$. Here the proof of the sublemma is complete.

\end{proof}

Now we are ready to prove Proposition \ref{FiniteToplgType-VariousGrowthCondition}:

(1) It follows directly by combining Lemma \ref{SmallEsentialDiamterImpliesSmallExcess} and inequality (\ref{RayDensityControlExcess}). 

(2) By Lemma \ref{SmallVolumeGrowth} and relative volume comparison, 
\begin{align*}
	\D_p(r)\le 4\frac{\vol(A_{r-2,r+2}(p))}{v_p(r)}&\le4\left((1+\frac2r)^n-(1-\frac2r)^n\right)\frac{\vol(B_r(p))}{v_p(r)}\\&\le \frac{C(n)}r \delta^{\frac{n-1}{n}} r^{1+\frac{1+(n-1)\alpha}{n}}=C(n)\delta^{\frac{n-1}{n}} r^{\frac{1+(n-1)\alpha}{n}},
\end{align*}
which is the case (1).

(3) By Lemma \ref{LargeVolumeGrowth}, $$\ray_p(r)\le C(n) (\nu^{-1})^{\frac{1}{n-1}}\left(r^{\frac{(n-1)^2(\alpha-1)}{n}}\delta^{\frac{(n-1)^2}{n}} \right)^{\frac1{n-1}}r=C(n,\nu)r^{1+\frac{(n-1)(\alpha-1)}{n}}\delta^{\frac{n-1}{n}}.$$

By inequality (\ref{RayDensityControlExcess}), for any $x\in\partial B_r(p)$,
$$e_p(x)\le 8\left(\frac{\ray_p(r)^n}{r}\right)^{\frac1{n-1}}\le C(n,\nu)r^{\alpha}\delta.$$

\subsection{Cheeger-Colding's Almost Splitting Functions}

First we recall the notion of $(\delta,k)$-splitting maps, which approximate coordinate functions in $H^2$-sense.
 
\begin{definition}[\cite{CC96}]\label{DefSplittingMap}
	A map $u=(u^1,...,u^k):B_r(p)\to\R^k$ is called a $(\delta,k)$-splitting map, if it satifies, for $\alpha,\beta=1,...,k$,
	\begin{itemize}

		\item [(1)] $\sup_{x\in B_r(p)}\|\grad u^\alpha(x)\|\le 1+\delta$,
		
		\item [(2)] $\dashint_{B_r(p)}|\spa{\grad u^{\alpha},\grad u^{\beta}}-\delta^{\alpha\beta}|\le \delta$,
		
		\item [(3)] $r^2\dashint_{B_r(p)}\|\hess u^\alpha\|^2\le \delta$,
	\end{itemize}
	where $\dashint_{B_r(p)}:=\frac{1}{\vol(B_r(p))}\int_{B_r(p)}$.

\end{definition}

If each $u^\alpha$ is harmonic, we call such $u$ a harmonic $(\delta,k)$-splitting map. And if $k=1$, we call $u$ a $\delta$-splitting function. One fundamental application of almost splitting functions is the following quantitative splitting theorem.

\begin{theorem}[\cite{CC96}]\label{Quantitativesplitting}
	Let $M$ be a complete $n$-manifold with $\Ric_M\ge-(n-1)\delta$. If $p_+,p_-,p\in M$ satisfy $$d(p,p_\pm)\ge\delta^{-1},\,d(p,p_+)+d(p,p_-)-d(p_+,p_-)\le\delta,$$ then there exists a harmonic $\Psi(\delta|n,R)$-splitting function $u:B_{20R}(p)\to\R$ which satisfies $|u-d(x,p_+)|_{C^0(B_{20R}(p))}\le\Psi(\delta|n,R)$. Furthermore, there exists a map $P:B_R(p)\to u^{-1}(u(p))$ s.t., the map $B_R(p)\to\R\times u^{-1}(u(p))$ defined by $x\mapsto(u(x),P(x))$ is a $\Psi(\delta|n,R)$-Gromov-Hausdorff approximation.
\end{theorem}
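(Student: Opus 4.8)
The plan is to reprove this the way Cheeger--Colding do it: approximate the distance function $d(\cdot,p_+)$ by a harmonic function, bound the latter's Hessian in $L^2$ by combining Bochner's formula with the Abresch--Gromoll excess estimate, and then integrate the gradient flow to manufacture the Gromov--Hausdorff splitting. Set $\mathbf b_\pm := d(\cdot,p_\pm)-d(p,p_\pm)$; these are $1$-Lipschitz, vanish at $p$, and $\mathbf b_++\mathbf b_-=e_{p_+,p_-}(\cdot)-e_{p_+,p_-}(p)$. By Theorem~\ref{ExcessEstimateAG} (in its quantitative form, \cite{CC96}) and the hypotheses $d(p,p_\pm)\ge\delta^{-1}$, $e_{p_+,p_-}(p)\le\delta$, one gets $0\le e_{p_+,p_-}\le\Psi(\delta|n,R)$ on $B_{100R}(p)$, hence $|\mathbf b_++\mathbf b_-|\le\Psi(\delta|n,R)$ there. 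Now solve the Dirichlet problem $\lap h_\pm=0$ on $B_{100R}(p)$, $h_\pm=\mathbf b_\pm$ on $\partial B_{100R}(p)$ (perturbing $R$ slightly if the sphere is irregular). The function $u:=h_++d(p,p_+)$, restricted to $B_{20R}(p)$, will be the asserted harmonic $\Psi$-splitting function.

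First the $C^0$ estimate $|h_\pm-\mathbf b_\pm|\le\Psi(\delta|n,R)$. Laplacian comparison together with $d(\cdot,p_\pm)\ge\delta^{-1}-100R$ gives $\lap\mathbf b_\pm\le (n-1)\sqrt\delta\coth\!\big(\sqrt\delta(\delta^{-1}-100R)\big)=\Psi(\delta|n,R)$ in the barrier sense; comparing $\mathbf b_\pm$ with $h_\pm$ (subtract $\Psi$ times the nonpositive bounded solution of $\lap\phi=1$, $\phi|_\partial=0$) yields $h_\pm\ge\mathbf b_\pm$. Since $h_++h_-$ is harmonic with boundary values of modulus $\le\Psi$, the maximum principle gives $|h_++h_-|\le\Psi$; hence $0\le h_\pm-\mathbf b_\pm\le (h_++h_-)-(\mathbf b_++\mathbf b_-)\le\Psi$. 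In particular $|u-d(\cdot,p_+)|_{C^0(B_{20R}(p))}\le\Psi(\delta|n,R)$.

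The technical core is the gradient and Hessian control. Energy minimization gives $\int_{B_{100R}}\|\grad h_\pm\|^2\le\vol(B_{100R})$; combined with $|h_++h_-|\le\Psi$ and a Caccioppoli inequality, $\int_{B_{90R}}\|\grad(h_++h_-)\|^2\le\Psi\vol$, so $\grad h_-\approx-\grad h_+$ in $L^2$, and together with the lower bound $\dashint_{B_{90R}}\|\grad h_+\|^2\ge 1-\Psi$ (since $h_+$, being $\Psi$-close to $d(\cdot,p_+)$, nearly realizes its unit slope along minimal geodesics to $p_+$) this yields $\dashint_{B_{90R}}\big|\|\grad h_+\|^2-1\big|\le\Psi$, once the sharp pointwise bound is known. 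That sharp bound $\sup_{B_{20R}}\|\grad h_+\|\le 1+\Psi$ comes from the Cheng--Yau gradient estimate for harmonic functions under $\Ric\ge-(n-1)\delta$, using $|h_+-\mathbf b_+|\le\Psi$ and the $1$-Lipschitz control of $\mathbf b_+$, Bochner's formula $\tfrac12\lap\|\grad h_+\|^2=\|\hess h_+\|^2+\Ric(\grad h_+,\grad h_+)\ge\|\hess h_+\|^2-(n-1)\delta\|\grad h_+\|^2$, and a Moser iteration sharpened by the $L^2$-closeness of $\|\grad h_+\|^2$ to $1$. Finally, integrating Bochner against a cutoff $\varphi$ with $\varphi\equiv1$ on $B_{20R}(p)$, $\supp\varphi\subset B_{90R}(p)$, and using $\int\lap\varphi=0$,
\[ \int_{B_{20R}}\|\hess h_+\|^2\ \le\ \tfrac12\int(\|\grad h_+\|^2-1)\lap\varphi\ +\ (n-1)\delta\int_{B_{90R}}\|\grad h_+\|^2\ \le\ \Psi(\delta|n,R)\,\vol(B_{20R}). \]
Thus $u$ satisfies the three conditions of Definition~\ref{DefSplittingMap} on $B_{20R}(p)$ with $\delta$ replaced by $\Psi(\delta|n,R)$.

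For the last assertion, run the gradient flow $\Phi_t$ of $u$ and apply a segment-type integral inequality (\cite{CC96}) to $\|\hess u\|^2$ and to $\big|\|\grad u\|^2-1\big|$: for $x,y\in B_R(p)$ and $|t|\le 4R$ this gives $|d(\Phi_t x,\Phi_t y)-d(x,y)|\le\Psi$, $|u(\Phi_t x)-u(x)-t|\le\Psi$, and the Pythagorean estimate $\big|d(x,y)^2-(u(x)-u(y))^2-d(P(x),P(y))^2\big|\le\Psi$, where $P(x):=\Phi_{u(p)-u(x)}(x)$ followed by nearest-point projection onto $u^{-1}(u(p))$. These estimates show that $x\mapsto(u(x),P(x))$ is $\Psi(\delta|n,R)$-isometric, and it is $\Psi$-onto since $\Phi_t$ sweeps out a neighbourhood of $B_R(p)$; hence it is a $\Psi(\delta|n,R)$-Gromov--Hausdorff approximation $B_R(p)\to\R\times u^{-1}(u(p))$. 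The hard part is the third step: extracting the sharp pointwise bound $\|\grad u\|\le 1+\Psi$ simultaneously with the $L^2$ Hessian bound (the $C^0$ and energy-minimization estimates are comparatively soft), and then, in the final step, upgrading an $L^2$-small Hessian to an honest metric product via the segment inequality.
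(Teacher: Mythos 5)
The paper does not prove Theorem~\ref{Quantitativesplitting}; it is quoted from Cheeger--Colding~\cite{CC96}, and the annular version it \emph{does} prove, Lemma~\ref{0108lem}, explicitly outsources the gradient and Hessian estimates back to this theorem. So the comparison is with the argument in \cite{CC96}. Your plan---harmonic replacement of $d(\cdot,p_\pm)$, Laplacian comparison plus maximum principle for the $C^0$ estimate, Bochner with a cutoff for the $L^2$ Hessian bound, and gradient flow plus segment inequality for the Gromov--Hausdorff map---is the Cheeger--Colding plan, and the $C^0$ estimate and the Bochner computation are handled correctly.

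The one genuine gap is in the step you yourself flag as the hard part, and it is a circularity. You deduce $\dashint|\|\grad h_+\|^2-1|\le\Psi$ from the mean bound $\dashint\|\grad h_+\|^2\ge 1-\Psi$ \emph{together with} the sharp sup bound $\sup\|\grad h_+\|\le 1+\Psi$, but then you derive that sup bound from ``a Moser iteration sharpened by the $L^2$-closeness of $\|\grad h_+\|^2$ to $1$,'' which is the estimate you are in the middle of proving. A two-sided bound on the signed average $\dashint\|\grad h_+\|^2$ is not enough input for Moser: the mean value inequality for the almost-subharmonic function $\|\grad h_+\|^2-1$ needs $\dashint(\|\grad h_+\|^2-1)_+$ to be already small, and the signed average can be $\approx0$ with large positive and negative parts. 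The way \cite{CC96} breaks the circle is to obtain the $L^1$ estimate \emph{before} any sharp sup bound. Put $f:=h_+-\mathbf b_+\in[0,\Psi]$; then $\lap f=-\lap\mathbf b_+$, with $\lap\mathbf b_+\le\Psi$ as a measure by Laplacian comparison. Hence for a cutoff $\varphi$,
\[
\int\varphi\,\|\grad f\|^2=-\int f\,\grad\varphi\cdot\grad f+\int\varphi\,f\,\lap\mathbf b_+\le\Psi\,\vol(B_{100R}(p)),
\]
and since $\|\grad\mathbf b_+\|=1$ a.e., one has $\|\grad h_+\|^2-1=\langle\grad(h_++\mathbf b_+),\grad f\rangle$ a.e., so Cauchy--Schwarz gives $\dashint|\|\grad h_+\|^2-1|\le\Psi$ directly. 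Only then is Cheng--Yau needed (for the rough bound $\|\grad h_+\|\le C(n)$ that makes Bochner yield $\lap\|\grad h_+\|^2\ge-\Psi$), after which Moser applied to the now-$L^1$-small $(\|\grad h_+\|^2-1)_+$ produces the sharp sup bound $\le 1+\Psi$. With this reordering your outline closes; the remaining steps---Hessian via Bochner with cutoff, segment inequality and Pythagorean estimate for the GH map---go through as in \cite{CC96}, and the geodesic lower bound on $\dashint\|\grad h_+\|^2$ becomes unnecessary.
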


During the proof of Theorem \ref{Quantitativesplitting}, Abresch-Gromoll's excess estimate plays a key role to show the existence of a $\Psi(\delta|n,R)$-splitting function $u$.

In our application, we need to find almost splitting functions on a sequence of large annuli. Let $A_{r,R}(p):=r_p^{-1}(r,R)$.

\begin{lemma}\label{0108lem}
	Let $(M,p)$ be a complete $n$-manifold with $\Ric_M\ge0$. If for $L\ge 600R>0$, $e_p|_{A_{L-100R,L+100R}(p)}\le \epsilon$, then  the solution of the equation,
	$$\begin{cases}
		\lap h=0,\quad\mathrm{on}\;A_{L-100R,L+100R}(p),
		\\h=r_p,\quad\mathrm{on}\;\partial{A_{L-100R,L+100R}(p)},
	\end{cases}$$
	satisfies that, for any $x\in A_{L-10R,L+10R}(p)$,
	\begin{itemize}

		\item [(1)] $|h(x)-r_p(x)|\le\Psi(\epsilon,L^{-1}|n,R)$,
		
		\item[(2)] $\|\grad h(x)\|\le 1+\Psi(\epsilon,L^{-1}|n,R)$,
		
		\item[(3)] $\dashint_{B_{10R}(x)}|\|\grad h\|^2-1|\le\Psi(\epsilon,L^{-1}|n,R)$,
		
		\item[(4)] $\dashint_{B_{10R}(x)}\|\hess h\|^2\le\Psi(\epsilon,L^{-1}|n,R)$.
	\end{itemize}

\end{lemma}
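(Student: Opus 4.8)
The plan is to obtain the harmonic approximation of $r_p$ on the annulus by a local-to-global argument: first establish the estimates on small balls using the quantitative splitting theorem (Theorem \ref{Quantitativesplitting}), then patch these local estimates together via maximum principle and gradient estimates for harmonic functions. Concretely, fix $x \in A_{L-10R,L+10R}(p)$ and a ray $\gamma$ from $p$. Since $e_p \le \epsilon$ on the larger annulus, the point $\gamma(2r_p(x))$ together with $p$ forms a thin triangle relative to $x$: we have $d(x,p) + d(x,\gamma(2r_p(x))) - d(p,\gamma(2r_p(x))) \le e_p(x) + \Psi \le \Psi(\epsilon|n)$, and both $d(x,p), d(x,\gamma(2r_p(x))) \ge (L - 10R)/2 \to \infty$ as $L \to \infty$. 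After rescaling so that balls of radius $\sim R$ become unit size, the hypotheses of Theorem \ref{Quantitativesplitting} are met with parameter $\Psi(\epsilon, L^{-1}|n,R)$, producing a harmonic $\Psi$-splitting function $v$ on $B_{20R}(x)$ with $|v - d(\cdot, \gamma(2r_p(x)))|_{C^0} \le \Psi$; subtracting a constant and using $|d(\cdot,\gamma(2r_p(x))) - (2r_p(x) - r_p(\cdot))| \le e_p \le \Psi$ on the annulus, we get a harmonic function $\tilde v$ on $B_{20R}(x)$ with $|\tilde v - r_p| \le \Psi$ and the gradient/Hessian estimates (2)--(4) (which are exactly the splitting-map conditions, local in nature and hence preserved).

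The issue is that $h$ is \emph{not} this locally-constructed $\tilde v$; it is the global harmonic function with boundary data $r_p$ on $\partial A_{L-100R,L+100R}(p)$. To bridge this, I would first prove the $C^0$ estimate (1) for $h$ directly: since $h$ is harmonic and $r_p$ is a subsolution ($\lap r_p \le (n-1)/r_p$ in the barrier sense by Laplacian comparison), a standard comparison with the harmonic function vanishing on the inner/outer boundary shows $h \ge r_p - \Psi(L^{-1}|n,R)$ on the inner portion of the annulus; for the reverse inequality, compare $h$ with $\tilde v + \Psi$ on the slightly smaller annulus $A_{L-50R,L+50R}(p)$ (covered by finitely many balls $B_{20R}(x_i)$ in a way that depends only on $n, R$ via volume comparison, after noting the annulus has controlled diameter from $e_p$ small), using that on $\partial A_{L-50R, L+50R}(p)$ both $h$ and $\tilde v$ are within $\Psi$ of $r_p$. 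This yields $|h - r_p| \le \Psi(\epsilon, L^{-1}|n,R)$ on $A_{L-30R,L+30R}(p)$, which is (1).

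Given (1), estimates (2)--(4) follow from elliptic regularity together with the Bochner formula. For (2): $h - \tilde v$ is harmonic on $B_{20R}(x)$ with $C^0$-norm $\le \Psi$, so by the gradient estimate for harmonic functions on manifolds with $\Ric \ge 0$ (Cheng--Yau), $\|\grad(h-\tilde v)\| \le \Psi/R \cdot \Psi = \Psi$ on $B_{10R}(x)$; combined with $\|\grad \tilde v\| \le 1 + \Psi$ (splitting-map property) this gives (2). For (4): integrate the Bochner formula $\tfrac12 \lap \|\grad h\|^2 = \|\hess h\|^2 + \Ric(\grad h, \grad h) \ge \|\hess h\|^2$ against a cutoff supported on $B_{20R}(x)$ equal to $1$ on $B_{10R}(x)$; the left side integrates to something controlled by $\sup \|\grad h\|^2 \cdot (\text{boundary terms}) \le C(n)(1+\Psi)$ times the cutoff's Laplacian, and more carefully using $\|\grad h\|^2 \le 1 + \Psi$ pointwise one gets $\dashint_{B_{10R}(x)} \|\hess h\|^2 \le \Psi$ — this is the standard derivation that an almost-maximal-gradient harmonic function is almost totally geodesic. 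Finally (3) follows by combining (4) with (2) and a Poincaré-type inequality, or directly: $\dashint_{B_{10R}(x)} |\|\grad h\|^2 - 1|$ is controlled once we know $\|\grad h\|^2 \le 1 + \Psi$ and $\dashint \|\hess h\|^2 \le \Psi$, since then $\|\grad h\|^2$ cannot drop far below $1$ on a set of definite measure without forcing large Hessian (this is essentially the argument in \cite{CC96}).

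I expect the main obstacle to be the covering/comparison argument establishing (1), specifically controlling the number of balls $B_{20R}(x_i)$ needed to cover the annular shell $A_{L-50R,L+50R}(p)$ uniformly in $L$: one must use that $e_p \le \epsilon$ forces the shell to be ``thin'' in a suitable sense (its connected components have extrinsic diameter $\Psi(\epsilon) L$, but more relevantly the relevant region near a fixed ray is covered efficiently), so that the accumulated error from patching finitely many $\Psi$'s remains a single $\Psi(\epsilon, L^{-1}|n,R)$ rather than blowing up. Everything after (1) is routine elliptic theory plus Bochner, carried out on a single ball of fixed (rescaled) size.
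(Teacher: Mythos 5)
The second half of your proposal — deriving (2), (3), (4) from (1) by comparing $h$ with a local harmonic $\Psi$-splitting function $u_x$ on $B_{20R}(x)$ via Cheng--Yau, then Bochner — matches the paper exactly. The gap is in your proof of (1), and it is a genuine one.

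First, you have the direction of the barrier comparison backwards. Laplacian comparison gives $\lap r_p \le (n-1)/r_p$ in the barrier sense, hence $\lap(h - r_p) \ge -(n-1)/r_p \ge -2(n-1)L^{-1}$ on the annulus; adding a barrier $\phi$ with $\lap\phi \ge 1$ and bounded oscillation (the paper uses $\phi_R = \underline{L}_{210R}(r_\Sigma)$) and applying the maximum principle to the subharmonic function $h - r_p + 2(n-1)L^{-1}\phi$ yields the \emph{upper} bound $h - r_p \le C(n)R^2 L^{-1}$. This does not give the lower bound $h \ge r_p - \Psi$ as you assert. The lower bound is precisely where the hypothesis $e_p \le \epsilon$ must enter, and Laplacian comparison alone cannot produce it.

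Second, for the remaining direction you propose a covering-and-patching argument using local splitting functions $\tilde v$ on finitely many balls $B_{20R}(x_i)$ covering the shell, hoping small excess forces the shell to be ``thin.'' This does not work: small excess does \emph{not} bound the diameter of the level sets — $e_p \equiv 0$ in $\mathbb{R}^n$ yet $\diam(\partial B_L)\sim L$. The number of balls of radius $R$ needed to cover $A_{L-50R,L+50R}(p)$ grows like $(L/R)^{n-1}$, so the accumulated error from chaining finitely many local $\Psi$'s blows up as $L\to\infty$, and in any case the maximum principle cannot be applied against a collection of different local functions. What the paper does instead — following the proof of the quantitative splitting theorem — is introduce a companion harmonic function $h^-$ solving the Dirichlet problem with boundary data (a version of) $b_p^{2L-200R}$. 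The same Laplacian-comparison barrier gives the analogous one-sided bound for $h^-$, and then the crucial observation is that the harmonic function $h^+ \mp h^-$ has boundary values on $\partial A_{L-100R,L+100R}(p)$ equal to the excess-type quantity $r_p - b_p^{2L-200R} \in [0,\epsilon]$, so by the maximum principle alone it is in $[0,\epsilon]$ throughout the annulus, giving the missing lower bound for $h = h^+$ with no covering needed. This two-function, max-principle mechanism is the idea your proposal is missing.
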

\begin{proof}
	By the condition of $e_p$, according to Theorem \ref{Quantitativesplitting}, for any $x\in A_{L-10R,L+10R}(p)$, there exists a harmonic $\Psi(\epsilon,L^{-1}|n,R)$-splitting function $u_x:B_{20R}(x)\to\R$ satisfying $|u_x-r_p|\le\Psi(\epsilon,L^{-1}|n,R)$. Hence once we verify property (1) for $h$, then properties (2),(3) are implied by the fact $\|\grad h-\grad u_x\|_{C^0(B_{15R}(x))}\le\Psi(\epsilon,L^{-1}|n,R)$ (Cheng-Yau's gradient estimate), and (4) is implied by Bochner formula. 
	
	The proof follows the same lines of the one of Theorem \ref{Quantitativesplitting}. Let $h^+:=h$ and $h^-$ be solutions of the following equations respectively, 
	
	$$\begin{cases}
		\lap h^+=0,\quad\mathrm{on}\;A_{L-100R,L+100R}(p),
		\\h^+=r_p,\quad\mathrm{on}\;\partial{A_{L-100R,L+100R}(p)},
	\end{cases}$$
	$$\begin{cases}
		\lap h^-=0,\quad\mathrm{on}\;A_{L-100R,L+100R}(p),
		\\h^-=-b_p^{2L-200R},\quad\mathrm{on}\;\partial{A_{L-100R,L+100R}(p)}.
	\end{cases}$$

	By Laplacian comparison, $$\lap (h^+-r_p)\ge- \frac{n-1}{r_p}\ge- \frac{n-1}{L-100R}\ge -2(n-1)L^{-1},$$ on $A_{L-100R,L+100R}(p)$ in barrier sense. Let $$\underline L_R(r)=\frac{1}{2n}(r^2-R^2)+\frac{R^n}{n(n-2)}(r^{2-n}-R^{2-n})$$ be the function satisfying
	$$\begin{cases}
		\underline{L}_R''(r)+\dfrac{n-1}{r}\underline{L}_R'(r)=1,
		\\\underline{L}_R(R)=0,
		\\\underline{L}_R'(r)<0,\,r\in (0,R).
	\end{cases}$$
	
	For $x\in \overline{A_{L-100R,L+100R}(p)}$, put $r_\Sigma(x)=d(x,\partial B_{L-110R}(p))$ and $\phi_R(x)=\underline L_{210R}(r_\Sigma(x))$. Thus 
	\begin{align*}	
		\lap\phi_R(x)=&\underline{L}_{210R}''(r_\Sigma(x))+\lap r_\Sigma(x) \underline{L}_{210R}'(r_\Sigma(x))\\\ge&\underline{L}_{210R}''(r_\Sigma(x))+\frac{n-1}{r_\Sigma(x)} \underline{L}_{210R}'(r_\Sigma(x))=1.
	\end{align*}
	Thus $\lap (h^+-r_p+2(n-1)L^{-1}\phi_R)(x)\ge0$ for $x\in A_{L-100R,L+100R}(p)$. By maximum principle, $$h^+(x)-r_p(x)+2(n-1)L^{-1}\phi_R(x)\le 2(n-1)L^{-1}\phi_R(x_0),$$ for some $x_0\in \partial{A_{L-100R,L+100R}(p)}$. Noting that $r_\Sigma(x),r_\Sigma(x_0)\in [10R,210R]$. Thus 
	\begin{equation}\label{PlusEst}
		h^+(x)-r_p(x)\le4(n-1)L^{-1}\underline{L}_{210R}(10R)=C(n)R^2L^{-1}.
	\end{equation}

For the other side, by Laplacian comparison, $$\lap (h^-+b_p^{2L-200R})\ge- \frac{n-1}{d(\cdot,\partial B_{2L-200R}(p))}\ge- \frac{n-1}{L-300R}\ge -2(n-1)L^{-1},$$ on $A_{L-100R,L+100R}(p)$ in barrier sense. Repeating the same procedure as above, we also have for any $x\in A_{L-100R,L+100R}(p)$, 
\begin{equation}\label{MinusEst}
	h^-(x)+b_p^{2L-200R}(x)\le C(n)R^2L^{-1}.
\end{equation}

Noting that $(h^++h^-)|_{\partial A_{L-100R,L+100R}(p)}=r_p-b_p^{2L-200R}\in[0,\epsilon]$ and $\lap (h^++h^-)=0$, again by maximum principle, for any $x\in A_{L-100R,L+100R}(p)$, $h^+(x)+h^-(x)\in [0,\epsilon]$. Hence $$h^+(x)-r_p(x)\ge -h^-(x)-b^{2L-200R}_p(x)-\epsilon\stackrel{(\ref{MinusEst})}\ge-C(n)R^2L^{-1}-\epsilon,$$ combining (\ref{PlusEst}), which proves (1). 
	
\end{proof}

\section{Approximate Distance Function}

The following result states that if $M$ has a small excess, then there exists a smooth function $h$ approximating $r_p$. $h$ will be our Morse function in the proof of Theorem \ref{FiniteToplgType-Main}.

\begin{theorem}\label{MorseFunctionExistence}
	Let $(M,p)$ be an open $n$-manifold with non-negative Ricci curvature. If there exists $\alpha\in[0,1]$, for any $x\in M$ outside a compact subset, 
	\begin{equation*}
		\frac{e_p(x)}{ r_p(x)^\alpha}\le \delta s
	\end{equation*}
	then there exists $R>0$, and a smooth function $h:M\backslash B_R(p)\to\R$ satisfying that for any $x\in M\backslash B_R(p)$,
	
	\begin{itemize}
		\item [(1)] $\frac{|h(x)-r_p(x)|}{s r_p(x)^{\alpha}}\le\Psi(s R^{\alpha-1},\delta|n)$,
		
		\item [(2)] Restriction $h:B_{s r_p(x)^\alpha}(x)\to\R$ is a $\Psi(s R^{\alpha-1},\delta|n)$-splitting function.
		
		\item [(3)] $s r_p(x)^\alpha|\lap h(x)|\le \Psi(s R^{\alpha-1},\delta|n)$. 
		
	\end{itemize}
	
\end{theorem}

The construction of $h$ in Theorem \ref{MorseFunctionExistence} is from gluing harmonic almost splitting functions on annuli by partition of unity. The following lemma is an adaption of Lemma \ref{0108lem}.

\begin{lemma}\label{LocalFlow}
Let $(M,p)$ be an open $n$-manifold with non-negative Ricci curvature. Assume that there exists $\alpha\in[0,1]$, for any $x\in M$ outside a compact subset, $$\frac{e_p(x)}{r_p(x)^\alpha}\le \delta s.$$ Then there exists $R_0>0$, s.t. for any $R>R_0$, there exists a harmonic function, $$h_R:A_{R-20s R^\alpha,R+20s R^\alpha}(p)\to\R$$ satisfying that, for any $x\in A_{R-10s  R^\alpha,R+10s R^\alpha}(p)$,
\begin{itemize}
	\item[(1)] $|h_R(x)-r_{p}(x)|\le s R^\alpha\Psi(s R^{\alpha-1},\delta|n)$, 
	
	\item[(2)] Restricting on each $B_{10s R^\alpha}(x)$, $h_R|_{B_{10s R^\alpha}(x)}$ is a $\Psi(s R^{\alpha-1},\delta|n)$-splitting function,

\end{itemize}

\end{lemma}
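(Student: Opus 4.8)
The plan is to deduce Lemma \ref{LocalFlow} from Lemma \ref{0108lem} by a rescaling argument, applied on the annulus $A_{R-20sR^\alpha,R+20sR^\alpha}(p)$. First I would observe that the radial width parameter in Lemma \ref{0108lem} is a fixed number $R$ (call it $\rho$ to avoid clashing notation), while here the width of the annulus scales like $sR^\alpha$; the natural move is therefore to rescale the metric by the factor $(sR^\alpha)^{-2}$, so that balls of radius $\sim sR^\alpha$ in $M$ become unit-size balls in the rescaled manifold $\bar M = (M, p, (sR^\alpha)^{-2}d)$. Non-negative Ricci curvature is scale-invariant, so $\bar M$ still has $\Ric\ge0$. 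Under this rescaling, the annulus $A_{R-20sR^\alpha,R+20sR^\alpha}(p)$ becomes, in $\bar M$, an annulus of the form $A_{\bar L-20\bar\rho,\bar L+20\bar\rho}(\bar p)$ with $\bar\rho$ a fixed constant (say $\bar\rho=20$, after absorbing the numerical factors) and $\bar L = R\cdot (sR^\alpha)^{-1} = s^{-1}R^{1-\alpha}$. One checks that the ratio condition $\bar L \ge 600\bar\rho$ needed in Lemma \ref{0108lem} amounts to $s^{-1}R^{1-\alpha}$ being large, i.e.\ to $sR^{\alpha-1}$ being small, which holds for $R$ large (for all $\alpha\in[0,1]$; when $\alpha=1$ it is the hypothesis $s\le\delta(n)$ together with $sR^{\alpha-1}=s$ small). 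So I would fix $R_0$ so that $sR_0^{\alpha-1}$ is small enough for this.

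Next I would verify that the hypothesis on the excess rescales correctly. On the original annulus $A_{R-100\cdot(sR^\alpha),R+100\cdot(sR^\alpha)}(p)$ — I would of course widen the annulus in the statement, or rather keep enough room; the constants $20$ vs.\ $10$ vs.\ $100$ are chosen so that the inner ball $A_{R-10sR^\alpha,R+10sR^\alpha}(p)$ sits comfortably inside — every point $x$ has $r_p(x)$ comparable to $R$, namely $r_p(x)\in[R-100sR^\alpha,R+100sR^\alpha]=[(1-\Psi)R,(1+\Psi)R]$ since $sR^{\alpha-1}$ is small. Hence by the hypothesis $e_p(x)\le \delta s\, r_p(x)^\alpha \le \delta s(2R)^\alpha = C(n)\,\delta\,(sR^\alpha)\cdot(sR^{\alpha-1})^{\,?}$ — more carefully, $e_p(x)\le \delta s r_p(x)^\alpha$ and after rescaling the metric by $(sR^\alpha)^{-1}$ the excess function $e_{\bar p}$ of $\bar M$ satisfies $e_{\bar p}(x) = (sR^\alpha)^{-1}e_p(x)\le (sR^\alpha)^{-1}\delta s r_p(x)^\alpha \le \delta\,(sR^\alpha)^{-1}s\,(2R)^\alpha\le C(n)\delta$ for $R$ large. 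Thus $e_{\bar p}|_{\bar A}\le \epsilon$ with $\epsilon=C(n)\delta$, and the hypothesis of Lemma \ref{0108lem} is met on $\bar M$ with $\bar\rho$ fixed, $\bar L=s^{-1}R^{1-\alpha}$ large.

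Applying Lemma \ref{0108lem} on $\bar M$ then produces a harmonic function $\bar h$ on the rescaled annulus with $|\bar h(x)-r_{\bar p}(x)|\le \Psi(\epsilon,\bar L^{-1}\mid n,\bar\rho)=\Psi(sR^{\alpha-1},\delta\mid n)$ (using $\epsilon=C(n)\delta$ and $\bar L^{-1}=sR^{\alpha-1}$, and $\bar\rho$ a fixed universal constant so it disappears into the $n$-dependence), together with the gradient and Hessian control of items (3),(4) of that lemma on each $B_{10}(x)$ in $\bar M$. Setting $h_R := sR^\alpha\cdot\bar h$ and scaling back, item (1) of Lemma \ref{LocalFlow} follows directly from item (1) of Lemma \ref{0108lem} (multiplying through by $sR^\alpha$), and item (2) follows from items (2),(3),(4) of Lemma \ref{0108lem}: harmonicity is scale-invariant, the gradient bound $\|\grad h_R\|\le 1+\Psi$ is scale-invariant, the condition $\dashint_{B_{10sR^\alpha}(x)}|\|\grad h_R\|^2-1|\le\Psi$ is exactly the rescaled form of Lemma \ref{0108lem}(3) since the integrand $\|\grad h_R\|^2=\|\grad\bar h\|^2$ is scale-invariant, and the Hessian term $r^2\dashint\|\hess h_R\|^2$ with $r=10sR^\alpha$ is scale-invariant and equals $(10)^2\dashint_{B_{10}}\|\hess\bar h\|^2\le\Psi$ by Lemma \ref{0108lem}(4); these are precisely conditions (1)--(3) of Definition \ref{DefSplittingMap} for a $\Psi(sR^{\alpha-1},\delta\mid n)$-splitting function on $B_{10sR^\alpha}(x)$. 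The main obstacle, and the only place requiring genuine care rather than bookkeeping, is checking uniformly over the full range $\alpha\in[0,1]$ that $sR^{\alpha-1}\to 0$ controls everything: for $\alpha<1$ this is automatic as $R\to\infty$, but for $\alpha=1$ one has $sR^{\alpha-1}=s$, which does not tend to $0$, so the smallness must instead come entirely from the standing hypothesis $s\le\delta(n)$ — meaning in the $\alpha=1$ case the error $\Psi(sR^{\alpha-1},\delta\mid n)$ should be read as $\Psi(s,\delta\mid n)$ and one relies on $\delta(n)$, and hence $s$, being chosen small; I would make this reading of the $\Psi$-notation explicit.
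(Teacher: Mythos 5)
Your proposal is correct and follows essentially the same route as the paper: rescale the metric by the factor $(sR^\alpha)^{-1}$ so that the annulus $A_{R-100sR^\alpha,\,R+100sR^\alpha}(p)$ becomes $\bar A_{\bar L-100,\,\bar L+100}(\bar p)$ with $\bar L = s^{-1}R^{1-\alpha}$, check that the rescaled excess is $\le 2\delta$ provided $sR^{\alpha-1}$ is small, apply Lemma \ref{0108lem}, and set $h_R = sR^\alpha\bar h_R$; the splitting properties and $C^0$-closeness then follow by the scale-invariance bookkeeping you describe. Your extra remark on reading $\Psi(sR^{\alpha-1},\delta\mid n)$ in the $\alpha=1$ case is a fair clarification but does not change the argument.
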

\begin{proof}

	Let $(\bar M,\bar p)=((sR^\alpha )^{-1}M,p)$. Then for any $\bar x\in \bar A_{s^{-1}R^{1-\alpha}-100,s^{-1}R^{1-\alpha}+100}(\bar p)$, $$e_{\bar p}(\bar x)=\frac{e_p(\bar x)}{sR^\alpha }\le \delta\left(\frac{r_p(\bar x)}{R}\right)^\alpha\le2\delta,$$provided $sR^{\alpha-1}<0.01$, where $\bar A$ denotes a metric annulus on $\bar M$. Applying Lemma \ref{0108lem}, there exists a harmonic $$\bar h_R:\bar A_{s^{-1}R^{1-\alpha}-100,s^{-1}R^{1-\alpha}+100}(\bar p)\to\R$$ satisfying, for any $\bar x\in \bar A_{s^{-1}R^{1-\alpha}-10,s^{-1}R^{1-\alpha}+10}(\bar p)$,
	\begin{itemize}

		\item [(1)] $|\bar h_R(\bar x)-r_{\bar p}(\bar x)|\le\Psi$,
		
		\item[(2)] $\|\bar \grad \bar h_R(\bar x)\|\le 1+\Psi$,
		
		\item[(3)] $\dashint_{B_{10}(\bar x)}|\|\bar \grad \bar h_R\|^2-1|\le\Psi$,
		
		\item[(4)] $\dashint_{B_{10}(\bar x)}\|\overline\hess \bar h_R\|^2\le\Psi$,
	\end{itemize}
	where $\Psi=\Psi(s R^{\alpha-1},\delta|n)$, and $\overline\grad$($\overline\hess$) means gradient(Hessian) respect to the metric on $\bar M$. Then $h_R:=s R^\alpha \bar h_R$ is what we need.
	
\end{proof}

In the following discussion, we will make $s R^{\alpha-1},\delta,$ become smaller and smaller, only depending on $n$, from line to line without specification.

Let $R=R_0$ as in Lemma \ref{LocalFlow}. Define $r_0=2R$, $r_{i+1}=r_i+4s r_i^\alpha$. We have the following elementary properties about $r_i$. 
\begin{proposition}\label{GoodCoverProperties}\quad

\begin{itemize}
	\item [(1)] For any $x\in M\backslash B_{r_0}(p)$, there exists $i\ge0$, s.t. $x\in A_{r_i-2.5s r_i^\alpha,r_i+2.5s r_i^\alpha}(p)$,
	
	\item [(2)] If $s r_0^{-1}$ is sufficiently small, then for any fixed $x\in M$, number of elements in $\{i=0,1,..|x\in A_{r_i-10s r_i^\alpha,r_i+10s r_i^\alpha}(p)\}$ is not larger than $6$. 
\end{itemize}\label{YYY}
	
\end{proposition}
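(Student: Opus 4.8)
\textbf{Proof proposal for Proposition \ref{GoodCoverProperties}.}

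The plan is to treat the sequence $r_i$ as a discretization of the ODE $r' = 4 s r^\alpha$ and extract the two stated covering facts from elementary monotonicity estimates on the increments $\Delta r_i := r_{i+1} - r_i = 4 s r_i^\alpha$. For part (1), I would argue that consecutive "core" intervals $I_i := [r_i - 2.5 s r_i^\alpha, r_i + 2.5 s r_i^\alpha]$ overlap, so their union covers $[r_0, \infty)$. The key inequality to check is that the right endpoint of $I_i$ is at least the left endpoint of $I_{i+1}$, i.e.
\begin{equation*}
	r_i + 2.5 s r_i^\alpha \ \ge\ r_{i+1} - 2.5 s r_{i+1}^\alpha \ =\ r_i + 4 s r_i^\alpha - 2.5 s r_{i+1}^\alpha ,
\end{equation*}
which rearranges to $2.5 s r_{i+1}^\alpha \ge 1.5 s r_i^\alpha$, and this is immediate since $r_{i+1} > r_i$ and $\alpha \ge 0$. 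Combined with $r_i \to \infty$ (because $\Delta r_i \ge 4 s r_0^\alpha > 0$ is bounded below when $\alpha = 0$, and for $\alpha > 0$ the increments only grow), this gives that $\bigcup_i I_i \supseteq [r_0, \infty)$, which is the claim after translating back to $M \setminus B_{r_0}(p)$ via $r_p$.

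For part (2), fix $x$ and let $\rho := r_p(x)$; I want to bound the number of $i$ with $|\rho - r_i| \le 10 s r_i^\alpha$. Suppose $i < j$ are two such indices; then $r_j - r_i \le |\rho - r_i| + |\rho - r_j| \le 10 s r_i^\alpha + 10 s r_j^\alpha \le 20 s r_j^\alpha$. On the other hand, each step satisfies $\Delta r_k = 4 s r_k^\alpha \ge 4 s r_i^\alpha$ for $k \ge i$, but to get the cleanest count I would instead compare all the relevant $r_k^\alpha$ to $r_j^\alpha$: for the indices in question $r_k$ ranges over an interval of multiplicative width close to $1$ (since $r_j/r_i \le 1 + 20 s r_j^{\alpha-1} \le 1 + 20 s r_0^{\alpha-1}$, which is close to $1$ when $s r_0^{-1}$, hence $s r_0^{\alpha - 1}$, is small — here one uses $\alpha \le 1$ so $r_0^{\alpha-1} \le r_0^{-1}$ up to the exponent, and smallness of $s r_0^{-1}$). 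Consequently $r_k^\alpha \ge \tfrac12 r_j^\alpha$ for all $i \le k \le j$, say, giving $r_j - r_i = \sum_{k=i}^{j-1} 4 s r_k^\alpha \ge 4 s (j-i) \cdot \tfrac12 r_j^\alpha = 2 s (j - i) r_j^\alpha$. Combining with $r_j - r_i \le 20 s r_j^\alpha$ yields $j - i \le 10$; being slightly more careful with the constants (the factor $\tfrac12$ can be pushed to $\tfrac34$ or better by taking $s r_0^{-1}$ smaller, and the overlap bound $10 + 10 = 20$ on one side can be split as before) brings the bound down to $j - i \le 5$, hence at most $6$ indices.

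The only real subtlety — and where I would be careful — is the case $\alpha = 1$, where $r_{i+1} = (1 + 4s) r_i$ grows geometrically: here $r_k^\alpha = r_k$ genuinely varies along the window, so the comparison $r_k \ge c\, r_j$ for $i \le k \le j$ must be justified quantitatively, and this is exactly what forces the hypothesis that $s r_0^{-1}$ (equivalently, since $\alpha \le 1$, that $s$ together with $r_0$ large) be small: it guarantees $r_j / r_i = \prod_{k=i}^{j-1}(1 + 4 s) = (1+4s)^{j-i}$ stays bounded on the relevant range, so that the geometric and arithmetic step counts agree up to a universal constant. For $\alpha < 1$ the increments $\Delta r_i = 4 s r_i^\alpha$ are sublinear in $r_i$ and the window width-to-step ratio is even more favorable, so the same estimate goes through with room to spare. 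I expect no genuine obstacle here; the proposition is a routine discrete-interval bookkeeping lemma, and the main task is simply to record the constants carefully enough that "$6$" comes out on the nose.
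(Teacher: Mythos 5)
Part (1) is carried out exactly as in the paper: the endpoints $r_i + 2.5 s r_i^\alpha \ge r_{i+1} - 2.5 s r_{i+1}^\alpha$ rearrange to $2.5 r_{i+1}^\alpha \ge 1.5 r_i^\alpha$, which is immediate. For part (2) you use the same basic mechanism as the paper --- bound $j-i$ by comparing $r_j - r_i = \sum_{k=i}^{j-1} 4s r_k^\alpha$ with the overlap inequality --- but you run it as a direct count rather than the paper's contradiction from $j-i=7$, and two details do not quite close.

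First, a sign issue: for $r_0 \ge 1$ and $\alpha \in [0,1]$ one has $r_0^{\alpha-1} \ge r_0^{-1}$, not $\le$, so ``$s r_0^{-1}$ small, hence $s r_0^{\alpha-1}$ small'' is backwards as you wrote it. What the argument actually needs (and what the paper explicitly assumes in the surrounding discussion) is that $s r_0^{\alpha-1}$ --- equivalently $s R^{\alpha-1}$ --- is small; for $\alpha=1$ this is literally ``$s$ small'' and is \emph{not} implied by $s r_0^{-1}$ small with $r_0$ large, which is exactly why Theorem \ref{FiniteToplgType-Main} carries the separate hypothesis $s \le \delta(n)$ when $\alpha = 1$. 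You should state the needed smallness directly rather than deduce it in the wrong direction.

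Second, the constants. Your upper bound $r_j - r_i \le 20 s r_j^\alpha$ is too crude: even with the ratio factor pushed all the way to $1$, dividing by $4 s r_j^\alpha$ gives $j-i \le 5$ only with no room to spare, and with any factor strictly below $1$ you get $j-i\le 6$ (hence $7$ indices, not $6$). The paper avoids this by keeping both terms, $r_j - r_i \le 10 s r_i^\alpha + 10 s r_j^\alpha$, and cancelling the $k=i$ summand $4 s r_i^\alpha$ against $10 s r_i^\alpha$ before dividing, which leaves $\sum_{k=i+1}^{j-1} (r_k/r_j)^\alpha \le 4$ with $j-i-1$ near-unit terms, hence $j-i-1 \le 4/(1-\epsilon) < 5$ for $\epsilon < 1/5$. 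You gesture at exactly this (``the overlap bound $10+10=20$ can be split as before'') but do not carry it out, and that splitting is what actually produces the stated count. Framing the argument as the paper does, by contradiction on a fixed window $\{i,\dots,i+7\}$, also removes the mild circularity in assuming $r_k^\alpha \ge \tfrac12 r_j^\alpha$ a priori, since the window length is fixed before the ratio estimate is invoked.
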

\begin{proof}

	(1) The conclusion follows by the fact that for each $i\ge0$, $r_i+2.5s r_i^\alpha>r_{i+1}-2.5s r_{i+1}^\alpha$. 
	
	(2) 
	Note that for any $i\ge0$, 
	\begin{equation}\label{RatioEstOfr_i}
		1\le \frac{r_{i+1}}{r_{i}}=1+4\frac{s}{r_i^{1-\alpha}}\le 1+4\frac{s}{r_0^{1-\alpha}}\le 1+4s R^{\alpha-1}.
	\end{equation}
	
	Hence if $s R^{\alpha-1}<0.1$, then $r_{i}-10s r_{i}^\alpha\le r_{i+1}-10s r_{i+1}^\alpha$ holds for any $i\ge0$. It suffices to show  $7\notin\{k=1,2,...|r_{i+k}-10s r_{i+k}^\alpha\le r_i+10s r_i^\alpha\}$ for any $i\ge0$. Otherwise, we assume, there exists $i\ge0$ s.t., $$r_{i+7}-10s r_{i+7}^\alpha\le r_i+10s r_i^\alpha.$$ Then
	$$r_{i+7}-r_i=4s\left(r_{i+6}^\alpha+...+r_i^\alpha\right)\le 10s(r_i^\alpha+r_{i+7}^\alpha),$$
	\begin{equation}\label{Estm_Sum}
		\left(\frac{r_{i+6}}{r_{i+7}}\right)^\alpha+...+\left(\frac{r_{i+1}}{r_{i+7}}\right)^\alpha\le 2.5+1.5\left(\frac{r_{i}}{r_{i+7}}\right)^\alpha\le 4.
	\end{equation}

Plugging (\ref{RatioEstOfr_i}) into the left hand side of (\ref{Estm_Sum}), yields

$$\frac{6}{(1+4sR^{\alpha-1})^{6\alpha}}\le\frac{1}{(1+4sR^{\alpha-1})^\alpha}+...+\frac{1}{(1+4sR^{\alpha-1})^{6\alpha}}\le4.$$ So$$\frac{1}{1+4sR^{\alpha-1}}\le \frac{1}{(1+4sR^{\alpha-1})^{\alpha}}\le\left(\frac23\right)^{\frac16},$$which will yield a contradiction if $4s R^{\alpha-1}<\left(\frac32\right)^{\frac16}-1$ .

\end{proof}

By Lemma \ref{LocalFlow}, for each $i$, there exists a harmonic function $$h_i=h_{r_i}:A_{r_i-20s r_i^\alpha,r_i+20s r_i^\alpha}(p)\to\R$$ satisfying that for any $x\in A_{r_i-10s r_i^\alpha,r_i+10s r_i^\alpha}(p)$,
\begin{itemize}
	\item[(1)] $|h_i(x)-r_{p}(x)|\le s r_i^\alpha\Psi(s r_i^{\alpha-1},\delta|n)$, 
	
	\item[(2)] restricting on each $B_{10s r_i^\alpha}(x)$, $h_i|_{B_{10s r_i^\alpha}(x)}$ is a $\Psi(s r_i^{\alpha-1},\delta|n)$-splitting function,

\end{itemize}

Fix a smooth cut-off function $\phi:(-\infty,+\infty)\to[0,1]$ s.t. $\phi|_{[-3,3]}=1$, $\supp \phi\subset[-4,4]$, and $|\phi'|+|\phi''|\le100$. For $x\in A_{r_i-10s r_i^\alpha,r_i+10s r_i^\alpha}(p)$, define $\phi_i(x)=\phi(\frac{h_i(x)-r_i}{s r_i^\alpha})$. Specially, $\supp\phi_i\subset A_{r_i-5 s r_i^\alpha,r_i+5 s r_i^\alpha}(p)$ and $\phi_i|_{A_{r_i-2.5s r_i^\alpha,r_i+2.5s r_i^\alpha}(p)}\equiv1$. Hence $\phi_i$ can be extended on global $M$ canonically. Further, by Proposition \ref{GoodCoverProperties}, $\phi_i$ satisfies, for any $x\in M\backslash B_{r_0}(p)$, 
\begin{itemize}
	\item [(1)] there exists $i\ge0$, s.t. $\phi_i(x)=1$.
	
	\item [(2)] $D(x):=\sum_{i\ge0}\phi_i(x)\in[1,6]$.
\end{itemize}

For $x\in M\backslash B_{r_0}(p)$ and each $i\ge0$, put $\psi_i(x)=\frac{\phi_i(x)}{D(x)}$, and $h(x)=\sum_{i\ge0}\psi_i(x)h_i(x)$.

\textbf{A simple fact:} for some $x\in M\backslash B_{r_0}(p)$, $i\ge0$ and $c>0$, if $|r_p(x)-r_i|\le c s r_i^\alpha$, then $$|\frac{r_p(x)}{r_i}-1|= \frac{cs}{r_i^{1-\alpha}}\le cs R^{\alpha-1}\le \frac c{100},$$provided $sR^{\alpha-1}\le \frac1{100}$, which will be used below without specification sometimes.
\begin{proposition}\label{C^0-Closeness}
	For each $x\in M\backslash B_{r_0}(p)$, $|h(x)-r_p(x)|\le s r_p(x)^\alpha\Psi(s r_p(x)^{\alpha-1},\delta|n)$. Specially, $|\frac{h(x)}{r_p(x)}-1|\le s r_p(x)^{\alpha-1}\Psi(s r_p(x)^{\alpha-1},\delta|n)$.
	
\end{proposition}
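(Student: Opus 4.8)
The plan is to estimate $h(x) - r_p(x) = \sum_{i \ge 0} \psi_i(x)(h_i(x) - r_p(x))$ by using that $\sum_i \psi_i(x) = 1$ on $M \setminus B_{r_0}(p)$, so the difference is a convex combination of the errors $h_i(x) - r_p(x)$. First I would note that for a fixed $x$, only those indices $i$ with $x \in \supp \phi_i \subset A_{r_i - 5sr_i^\alpha,\, r_i + 5sr_i^\alpha}(p)$ contribute, and by Proposition \ref{GoodCoverProperties}(2) there are at most $6$ such indices. For each such $i$ we have $|r_p(x) - r_i| \le 5 s r_i^\alpha$, hence by the \textbf{simple fact} recorded above (with $c = 5$), $r_i$ and $r_p(x)$ are comparable: $\tfrac{99}{100} \le \tfrac{r_p(x)}{r_i} \le \tfrac{101}{100}$ once $sR^{\alpha-1}$ is small. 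In particular $s r_i^\alpha$ and $s r_p(x)^\alpha$ are comparable, and $\Psi(sr_i^{\alpha-1},\delta \mid n)$ and $\Psi(s r_p(x)^{\alpha-1},\delta \mid n)$ are comparable (after absorbing constants into $\Psi$).

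Next, for each contributing $i$, property (1) of the glued functions $h_i$ from Lemma \ref{LocalFlow} gives $|h_i(x) - r_p(x)| \le s r_i^\alpha \Psi(s r_i^{\alpha-1}, \delta \mid n)$ on $A_{r_i - 10 s r_i^\alpha,\, r_i + 10 s r_i^\alpha}(p)$, which certainly contains $x$. Using the comparability from the previous step, this is bounded by $s r_p(x)^\alpha \Psi(s r_p(x)^{\alpha-1}, \delta \mid n)$ (again redefining $\Psi$ by a bounded factor). Therefore
\begin{equation*}
|h(x) - r_p(x)| = \Big|\sum_{i \ge 0} \psi_i(x)\big(h_i(x) - r_p(x)\big)\Big| \le \sum_{i \ge 0} \psi_i(x)\, s r_p(x)^\alpha \Psi(s r_p(x)^{\alpha-1}, \delta \mid n) = s r_p(x)^\alpha \Psi(s r_p(x)^{\alpha-1}, \delta \mid n),
\end{equation*}
since $\psi_i(x) \ge 0$ and $\sum_i \psi_i(x) = 1$. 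Dividing by $r_p(x)$ gives the ``specially'' clause: $\big|\tfrac{h(x)}{r_p(x)} - 1\big| \le s r_p(x)^{\alpha-1} \Psi(s r_p(x)^{\alpha-1}, \delta \mid n)$.

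I do not expect a serious obstacle here; the only thing requiring care is the bookkeeping that shows the various $\Psi$'s indexed by different $r_i$ can all be dominated by a single $\Psi$ in terms of $r_p(x)$, which rests entirely on the uniform comparability $r_i \asymp r_p(x)$ over the at most $6$ contributing indices (Proposition \ref{GoodCoverProperties}) together with the monotonicity of $\Psi$ in its arguments. One should also check that $x$ indeed lies in the domain $A_{r_i - 10 s r_i^\alpha, r_i + 10 s r_i^\alpha}(p)$ where the estimates for $h_i$ are valid whenever $\phi_i(x) \ne 0$, which is immediate from $\supp \phi_i \subset A_{r_i - 5 s r_i^\alpha, r_i + 5 s r_i^\alpha}(p)$.
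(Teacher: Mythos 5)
Your proof is correct and follows the same approach as the paper: write $h - r_p = \sum_i \psi_i (h_i - r_p)$ as a convex combination using $\sum_i \psi_i = 1$, bound each $|h_i(x) - r_p(x)|$ by Lemma~\ref{LocalFlow}(1), and convert $r_i$ to $r_p(x)$ via the ``simple fact'' comparability. (The cardinality bound of $6$ from Proposition~\ref{GoodCoverProperties}(2) is not actually needed here, since the $\psi_i$ already sum to $1$, but including it does no harm.)
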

\begin{proof}
For fixed $x\in M\backslash B_{r_0}(p)$, $$h(x)=\sum_{i\ge0}\psi_i(x)h_i(x)=\sum_{|r_p(x)-r_i|<10s r_i^\alpha}\psi_i(x)h_i(x).$$ So
	
	\begin{align*}
		&|h(x)-r_p(x)|\\\le& \sum_{|r_p(x)-r_i|<10s r_i^\alpha}\psi_i(x)|h_i(x)-r_p(x)|\\\le&\sum_{|r_p(x)-r_i|<10s r_i^\alpha}\psi_i(x)s r_i^\alpha\Psi(s r_i^{\alpha-1},\delta|n)\\\le&s r_p(x)^\alpha\Psi(s r_p(x)^{\alpha-1},\delta|n),
	\end{align*}
where the last inequality uses the simple fact mentioned above.

\end{proof}

\begin{proposition}\label{SupportFunctionReg}
	$$s r_i^{\alpha}\|\grad\psi_i\|+s^2r_i^{2\alpha}|\lap\psi_i|\le C(n).$$
\end{proposition}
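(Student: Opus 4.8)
The plan is to estimate $\grad\psi_i$ and $\lap\psi_i$ by tracing through the definition $\psi_i = \phi_i/D$ where $\phi_i(x) = \phi\big((h_i(x)-r_i)/(s r_i^\alpha)\big)$ and $D = \sum_j \phi_j$, using the regularity of the building blocks $h_i$ from Lemma \ref{LocalFlow} and the uniformly bounded multiplicity from Proposition \ref{GoodCoverProperties}(2). First I would record the pointwise bounds on $h_i$: by Lemma \ref{LocalFlow}(2), on the relevant annulus $h_i$ restricted to each $B_{10 s r_i^\alpha}(x)$ is a $\Psi(s r_i^{\alpha-1},\delta|n)$-splitting function, so $\|\grad h_i\| \le 1+\Psi$; and since $h_i$ is \emph{harmonic}, $\lap h_i = 0$ identically, which kills what would otherwise be the worst term. (One also needs a pointwise Hessian bound $\|\hess h_i\| \le C(n)(s r_i^\alpha)^{-1}$ on the slightly smaller ball, which follows from the $L^2$ Hessian bound in Lemma \ref{0108lem}(4) together with elliptic/Cheng-Yau-type estimates applied to $\grad h_i$, all at the rescaled level where the ball has radius comparable to $1$; this is the kind of standard interior estimate I would invoke rather than re-prove.)

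Next I would differentiate. Writing $t = (h_i - r_i)/(s r_i^\alpha)$, we get $\grad\phi_i = \phi'(t)\,\grad h_i/(s r_i^\alpha)$, so $\|\grad \phi_i\| \le 100 (1+\Psi)/(s r_i^\alpha) \le C(n)/(s r_i^\alpha)$ using $|\phi'|\le 100$. For the Laplacian, $\lap\phi_i = \phi''(t)\,\|\grad h_i\|^2/(s r_i^\alpha)^2 + \phi'(t)\,\lap h_i/(s r_i^\alpha) = \phi''(t)\,\|\grad h_i\|^2/(s r_i^\alpha)^2$, the second term vanishing by harmonicity; hence $|\lap\phi_i| \le 100(1+\Psi)^2/(s r_i^\alpha)^2 \le C(n)/(s r_i^\alpha)^2$. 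Now on the support of $\phi_i$ one has $r_p(x)$ comparable to $r_i$ (within the factor $1+\tfrac{c}{100}$ from the "simple fact"), and any $j$ with $x$ in $\supp\phi_j$ also has $r_j$ comparable to $r_i$, with at most $6$ such indices by Proposition \ref{GoodCoverProperties}(2); so $\|\grad D\| \le \sum_j \|\grad\phi_j\| \le C(n)/(s r_i^\alpha)$ and $|\lap D| \le C(n)/(s r_i^\alpha)^2$, and $D \ge 1$.

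Finally I would apply the quotient rule: $\grad\psi_i = \grad\phi_i/D - \phi_i\,\grad D/D^2$, so $\|\grad\psi_i\| \le \|\grad\phi_i\| + \|\grad D\| \le C(n)/(s r_i^\alpha)$, giving $s r_i^\alpha\|\grad\psi_i\| \le C(n)$; and
$$
\lap\psi_i = \frac{\lap\phi_i}{D} - \frac{2\,\spa{\grad\phi_i,\grad D}}{D^2} - \frac{\phi_i\,\lap D}{D^2} + \frac{2\phi_i\,\|\grad D\|^2}{D^3},
$$
each term bounded by $C(n)/(s r_i^\alpha)^2$ using $D\ge 1$ and the bounds above, so $s^2 r_i^{2\alpha}|\lap\psi_i| \le C(n)$. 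The main obstacle is not the chain-rule bookkeeping but securing the pointwise Hessian bound on $h_i$ (equivalently, a genuine $C^1$ gradient bound good enough to differentiate once more): the hypotheses of Lemma \ref{LocalFlow} only give $L^2$ control of $\hess h_i$, so one must upgrade this via interior regularity for the harmonic function $h_i$ on a unit-size ball after rescaling by $(s r_i^\alpha)^{-1}$ — and, strictly, one should check that the statement as written only needs bounds at scale $r_i$ on $\supp\phi_i$, where $r_i$ and $r_p(x)$ are interchangeable up to universal constants, so that the claimed inequality in terms of $r_i$ is exactly what the construction delivers.
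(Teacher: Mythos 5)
Your computation is correct and is essentially what the paper means by ``a direct calculation'': bound $\grad\phi_i$ and $\lap\phi_i$ via the chain rule, use $|\phi'|,|\phi''|\le 100$ and the pointwise bound $\|\grad h_i\|\le 1+\Psi$ from Lemma~\ref{LocalFlow}(2), kill the $\lap h_i$ term by harmonicity, use bounded multiplicity from Proposition~\ref{GoodCoverProperties}(2) together with the comparability of $r_i$, $r_j$, $r_p(x)$ on overlapping supports to bound $\grad D$ and $\lap D$, and finish with the quotient rule and $D\ge 1$.

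However, the concern you flag as the ``main obstacle'' is a red herring, and your own formulas already show this. You correctly write
$$
\lap\phi_i \;=\; \frac{\phi''(t)\,\|\grad h_i\|^2}{(s r_i^\alpha)^2} \;+\; \frac{\phi'(t)\,\lap h_i}{s r_i^\alpha},
$$
which involves only $\grad h_i$ and $\lap h_i$ --- the Hessian of $h_i$ never appears in the Laplacian of a scalar composition $\phi\circ h_i$. Since $\lap h_i=0$ identically, the $C^0$ gradient bound from Lemma~\ref{LocalFlow}(2) is all you need, and the parenthetical claim that one must secure $\|\hess h_i\|\le C(n)(s r_i^\alpha)^{-1}$ (and the closing paragraph repeating it) should simply be deleted. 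There is no need to upgrade the $L^2$ Hessian control to a pointwise estimate, and no gap to fill.
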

\begin{proof}
	It's a direct calculation.
\end{proof}

The following proposition finishes the proof of Theorem \ref{MorseFunctionExistence}.
\begin{proposition}
	For any $x\in M\backslash B_{r_0}(p)$,
	$$\|\grad h(x)\|\le C(n),$$
	
	$$\dashint_{B_{s r_p(x)^\alpha}(x)}|\|\grad h\|^2-1|\le \Psi(s r_p(x)^{\alpha-1},\delta|n),$$
	$$s r_p(x)^\alpha|\lap h(x)|\le\Psi(s r_p(x)^{\alpha-1},\delta|n).$$
\end{proposition}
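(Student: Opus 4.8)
The plan is to estimate $h=\sum_i\psi_i h_i$ locally on a ball $B_{sr_p(x)^\alpha}(x)$ by treating the sum as a finite combination of at most $6$ almost-splitting functions (Proposition \ref{GoodCoverProperties}(2)) whose mutual $C^0$-distance and whose distance to $r_p$ is controlled by Lemma \ref{LocalFlow}(1), together with the partition-of-unity bounds of Proposition \ref{SupportFunctionReg}. First I would fix $x\in M\backslash B_{r_0}(p)$, let $I=\{i:\phi_i(x)\neq 0\}$ (so $|I|\le 6$), and observe via the simple fact preceding Proposition \ref{C^0-Closeness} that for $i\in I$ all the radii $r_i$ are comparable to $r_p(x)$, hence $sr_i^\alpha$ is comparable to $sr_p(x)^\alpha$ and the ball $B_{sr_p(x)^\alpha}(x)$ sits inside the common domain $B_{10sr_i^\alpha}(x)$ on which each $h_i$ is a $\Psi$-splitting function. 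Writing $\Psi=\Psi(sr_p(x)^{\alpha-1},\delta\,|\,n)$ throughout, I would record the pointwise bounds $\|\grad h_i\|\le 1+\Psi$ on $B_{10sr_i^\alpha}(x)$ (Cheng--Yau gradient estimate, as in the proof of Lemma \ref{0108lem}), which immediately gives $|h_i(y)-h_i(x)|\le (1+\Psi)\,sr_p(x)^\alpha$ for $y$ in the ball, and combined with $|h_i-r_p|\le sr_i^\alpha\Psi$ gives $|h_i(y)-h_j(y)|\le sr_p(x)^\alpha\Psi$ for any $i,j\in I$ and $y\in B_{sr_p(x)^\alpha}(x)$.

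Next, for the gradient bound, differentiate: $\grad h=\sum_i\psi_i\grad h_i+\sum_i h_i\grad\psi_i$. Since $\sum_i\grad\psi_i=0$ (as $\sum_i\psi_i\equiv 1$ on $M\backslash B_{r_0}(p)$), I can rewrite the second sum as $\sum_{i\in I}(h_i-h_{i_0})\grad\psi_i$ for a fixed reference index $i_0\in I$; then $\|\sum_i h_i\grad\psi_i\|\le \sum_{i\in I}|h_i-h_{i_0}|\,\|\grad\psi_i\|\le 6\cdot sr_p(x)^\alpha\Psi\cdot C(n)(sr_i^\alpha)^{-1}\le C(n)\Psi$ by Proposition \ref{SupportFunctionReg} and comparability of radii, while $\|\sum_i\psi_i\grad h_i\|\le (1+\Psi)$ since $\sum\psi_i=1$. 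This gives $\|\grad h(x)\|\le C(n)$ (in fact $\le 1+\Psi$, which is stronger and will be needed for the integral estimate). For the $L^1$ estimate of $\|\grad h\|^2-1$, I would expand $\grad h=\sum_i\psi_i\grad h_i+E$ with $\|E\|\le C(n)\Psi$, so $\|\grad h\|^2=\|\sum_i\psi_i\grad h_i\|^2+O(\Psi)$ pointwise; then $\|\sum_i\psi_i\grad h_i\|^2=\sum_{i,j}\psi_i\psi_j\spa{\grad h_i,\grad h_j}$, and on $B_{sr_p(x)^\alpha}(x)$ the functions $h_i-h_j$ are $\Psi$-close harmonic functions on the larger ball, so by the splitting-map property (2) applied after averaging, $\dashint_{B_{sr_p(x)^\alpha}(x)}|\spa{\grad h_i,\grad h_j}-1|\le\Psi$ for all $i,j\in I$ (using that a $C^0$-small harmonic perturbation is $C^1$-small on a slightly smaller ball); summing with the weights $\sum_{i,j}\psi_i\psi_j=1$ yields $\dashint_{B_{sr_p(x)^\alpha}(x)}|\|\grad h\|^2-1|\le\Psi$.

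Finally, for the Laplacian estimate, use $\lap h_i=0$ to get $\lap h=\sum_i(\lap\psi_i)h_i+2\sum_i\spa{\grad\psi_i,\grad h_i}$. As before replace $h_i$ by $h_i-h_{i_0}$ in the first sum (legitimate since $\sum_i\lap\psi_i=0$) and $\grad h_i$ by $\grad h_i-\grad h_{i_0}$ in the second (since $\sum_i\grad\psi_i=0$), obtaining $|\lap h(x)|\le\sum_{i\in I}|h_i-h_{i_0}|\,|\lap\psi_i|+2\sum_{i\in I}\|\grad\psi_i\|\,\|\grad h_i-\grad h_{i_0}\|$; the first term is $\le 6\cdot sr_p(x)^\alpha\Psi\cdot C(n)(sr_i^\alpha)^{-2}$, and for the second term I use that $h_i-h_{i_0}$ is a harmonic function with $C^0$-norm $\le sr_p(x)^\alpha\Psi$ on $B_{10sr_i^\alpha}(x)$, whence by interior gradient estimates $\|\grad h_i-\grad h_{i_0}\|\le C(n)\Psi$ on $B_{sr_p(x)^\alpha}(x)$, giving the second term $\le C(n)(sr_i^\alpha)^{-1}\cdot C(n)\Psi$. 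Multiplying through by $sr_p(x)^\alpha$ and using comparability $r_i\sim r_p(x)$ gives $sr_p(x)^\alpha|\lap h(x)|\le\Psi$. The main obstacle, and the only genuinely non-formal point, is upgrading the $C^0$-closeness of the harmonic functions $h_i-h_j$ (resp.\ $h_i-h_{i_0}$) to $C^1$-closeness on a comparable ball --- i.e.\ justifying that a harmonic function which is $C^0$-small on $B_{10sr_i^\alpha}(x)$ has gradient that is $\Psi$-small on $B_{sr_p(x)^\alpha}(x)$; this is a standard interior elliptic estimate (rescale to unit size, where $\Ric\ge 0$ persists, and apply Cheng--Yau or the mean-value/Bochner argument already invoked in the proof of Lemma \ref{0108lem}), but it is the step that makes all three assertions work simultaneously. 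Everything else is the bookkeeping of a partition of unity with uniformly controlled derivatives over a uniformly bounded overlap.
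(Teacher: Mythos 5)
Your argument is correct and follows essentially the same route as the paper: compare $\grad h$ to a single reference $\grad h_{i_0}$ (the paper picks $i_0=j$ with $|r_p(x)-r_j|\le 2.5sr_j^\alpha$), use the $C^0$-closeness from Lemma \ref{LocalFlow}(1), upgrade it to $C^1$-closeness by Cheng--Yau applied to the harmonic differences $h_i-h_{i_0}$, then transfer the splitting estimates from $h_{i_0}$ to $h$ and invoke the bounded-overlap and cut-off bounds of Propositions \ref{GoodCoverProperties}(2) and \ref{SupportFunctionReg}. The ``main obstacle'' you isolate (passing from $C^0$- to $C^1$-smallness of $h_i-h_j$) is exactly the step the paper handles via Cheng--Yau. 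The one imprecision worth fixing: your index set $I=\{i:\phi_i(x)\neq 0\}$ is taken at the single point $x$, whereas the integral estimate (and the pointwise Laplacian bound applied under the integral) requires considering, for each $y\in B_{sr_p(x)^\alpha}(x)$, the set of $i$ with $\psi_i(y)\neq 0$, which may contain indices not in $I$; you should instead work with $\{i: |r_p(x)-r_i|\le 10sr_i^\alpha\}$ (still of cardinality $\le 6$ by Proposition \ref{GoodCoverProperties}(2), and containing all indices that can appear for any $y$ in the ball), which is what the paper implicitly does. Your quadratic-form expansion $\|\sum_i\psi_i\grad h_i\|^2=\sum_{i,j}\psi_i\psi_j\langle\grad h_i,\grad h_j\rangle$ is a cosmetic variant of the paper's direct comparison $\|\grad h-\grad h_j\|\le\Psi$, and leads to the same conclusion.
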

\begin{proof}
	For any fixed $x\in M\backslash B_{r_0}(p)$, by Proposition \ref{GoodCoverProperties} (1), there exists a $j\ge0$ s.t. \begin{equation}\label{r_p(x)-r_j}
		|r_p(x)-r_j|\le2.5s r_j^\alpha,
	\end{equation}
	For any $y\in B_{2s r_j^\alpha}(x)$ and any $i$ s.t. $|r_p(y)-r_i|\le 10s r_i^\alpha$, we have
	$$|r_p(y)-r_j|\le|r_p(y)-r_p(x)|+|r_p(x)-r_j|\le4.5 sr_j^\alpha.$$So this allows us to use the closeness properties of $h_i(y)$, $h_j(y)$ and $r_p(y)$,
	\begin{equation*}
			|h_i(y)-h_j(y)|\le|h_i(y)-r_p(y)|+|r_p(y)-h_j(y)|\le s r_p(y)^\alpha\Psi(s r_p(y)^{\alpha-1},\delta|n).
	\end{equation*}
	Again using the simple fact mentioned above, we see $|\frac{r_p(x)}{r_p(y)}-1|\le \frac12$. So 
	\begin{equation}\label{C^0Close}
		|h_i(y)-h_j(y)|\le s r_j^\alpha\Psi(s r_p(x)^{\alpha-1},\delta|n).
	\end{equation}
	
	Since $h_i,h_j$ are harmonic on $B_{2s r_j^\alpha}(x)$, by Cheng-Yau's gradient estimate, 
	\begin{equation}\label{C^1Close}
		\sup_{B_{s r_j^\alpha}(x)}\|\grad h_i-\grad h_j\|\le  \Psi(s r_p(x)^{\alpha-1},\delta|n).
	\end{equation}
	
	Now for any $y\in B_{s r_j^\alpha}(x)$, combining (\ref{C^0Close}), (\ref{C^1Close}) and Proposition \ref{SupportFunctionReg},
	\begin{align*}
		&\|\grad h(y)-\grad h_j(y)\|\\\le&\sum_{|r_p(y)-r_i|\le 10s r_i^\alpha}\|\grad\psi_i(y)\||h_i(y)-h_j(y)|+\psi_i(y)\|\grad h_i(y)-\grad h_j(y)\|\\\le&\sum_{|r_p(y)-r_i|\le 10s r_i^\alpha}C(n)(s r_i^\alpha)^{-1}s r_j^\alpha\Psi(s r_p(x)^{\alpha-1},\delta|n)+\psi_i(y) \Psi(s r_p(x)^{\alpha-1},\delta|n)\\\le&\Psi(s r_p(x)^{\alpha-1},\delta|n).
	\end{align*}
	Combining the above estimate and the fact that $h_j|_{B_{s r_j^\alpha}(x)}$ is a $\Psi(s r_j^{-1},\delta|n)$-splitting function, yields, 
	\begin{equation*}
		\dashint_{B_{s r_j^\alpha}(x)}|\|\grad h\|^2-1|\le \Psi(s r_p(x)^{\alpha-1},\delta|n).
	\end{equation*}
	Thus, combing (\ref{r_p(x)-r_j}) and relative volume comparison, $$\dashint_{B_{s r_p(x)^\alpha}(x)}|\|\grad h\|^2-1|\le\frac{\vol(B_{s r_j^\alpha}(x))}{\vol(B_{s r_p(x)^\alpha}(x))}   \dashint_{B_{s r_j^\alpha}(x)}|\|\grad h\|^2-1|\le \Psi(s r_p(x)^{\alpha-1},\delta|n).$$
	Finally, for any $y\in B_{s r_j^\alpha}(x)$,
	\begin{align*}
		&|\lap h(y)|=|\lap (h(y)-h_j(y))|\\\le&\sum_{|r_i-r_p(y)|<10s r_i^\alpha}|h_i(y)-h_j(y)||\lap\psi_i(y)|+2\|\grad\psi_i(y)\|\|\grad h_i(y)-\grad h_j(y)\|\\\le&C(n)\sum_{|r_i-r_p(y)|<10s r_i^\alpha}s r_p(y)^\alpha\Psi(s r_p(x)^{\alpha-1},\delta|n) (s r_p(y)^\alpha)^{-2} +(s r_p(y)^\alpha)^{-1}\Psi(s r_p(x)^{\alpha-1},\delta|n)  \\\le&(s r_p(x)^\alpha)^{-1} \Psi(s r_p(x)^{\alpha-1},\delta|n).
	\end{align*}

\end{proof}

\section{Finite Topological Type}

In this section, we show the function $h$ constructed in Section 3 is a Morse function. Precisely, we need the non-degeneracy Lemma \ref{NonDegenrateLemma} for $h$, which is a simple corollary of transformation theorems for almost splitting maps. Firstly, we point out that the following transformation theorem holds, which generalizes \cite[Theorem 1.7]{HH22} slightly. Recall that $B_r(x)$ is called $(\delta,K)$-Euclidean if there exists a metric space $(\R^K\times X,(0^K,x^*))$ s.t. $d_{GH}(B_r(x),B_r(0^K,x^*))\le r\delta$.

\begin{theorem}[Transformation Theorem]\label{TransThm}
	There exists $\delta_{0}=\delta_0(n, \epsilon, \eta,L)$ s.t, for any $\delta\in(0,\delta_{0})$, the following holds.
	Suppose $(M,p)$ is an $n$-manifold with $\Ric\ge-(n-1)\delta$ and $\overline{B_8(p)}$ is compact, and there exists $s\in(0,1)$ such that, for any $r\in[s,8]$, $B_{r}(p)$ is $(\delta, K)$-Euclidean but not $(\eta, K+1)$-Euclidean.
	Let $u : B_{8}(p)\rightarrow\mathbb{R}^{k}$ be a $C^2$-map, $1\le k\le K\le n$, satisfying for any $\alpha,\beta=1,..,k$,
	\begin{enumerate}
		\item [(1)] $\dashint_{B_8(p)}|\spa{\grad u^\alpha,\grad u^\beta}-\delta^{\alpha\beta}|\le\delta$,
		
		\item [(2)] $|\lap u^\alpha|_{C^0(B_8(p))}\le L$.
	\end{enumerate}
	Then for each $r\in[s,1]$, there exists a $k\times k$ lower triangle matrix $A_{r}=A_{r,p}$ with positive diagonal entries satisfying that,
	\begin{enumerate}
		
		\item[(3)] $\dashint_{B_r(p)}|\spa{\grad (A_ru)^\alpha,\grad (A_ru)^\beta}-\delta^{\alpha\beta}|\le\epsilon$,
		
		\item[(4)] $\dashint_{B_r(p)}\spa{\grad (A_ru)^\alpha,\grad (A_ru)^\beta}=\delta^{\alpha\beta}$,
		
		\item[(5)] $|A_{r}|+|A_r^{-1}|\le (1+\epsilon) r^{-\epsilon}$, here $|\cdot|$ means maximal norm of a matrix.
		
		\item[(6)] $|\lap (A_ru)^\alpha|_{C^0(B_r(p))}\le C(n,L)r^{-\epsilon}$, $\|\grad (A_ru)^\alpha\|_{C^0(B_r(p))}\le C(n,L)$.
	\end{enumerate}

\end{theorem}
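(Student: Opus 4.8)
I would produce the matrices $A_r$ only at the dyadic scales $r_j=2^{-j}$ (lying in $[s,1]$), and interpolate to general $r$ at the cost of fixed constants. At each dyadic scale I normalize the $L^2$ gradient bilinear form $B_r^{\alpha\beta}:=\dashint_{B_r(p)}\spa{\grad u^\alpha,\grad u^\beta}$ by a Cholesky (Gram--Schmidt) factorization, and I control the size of the resulting matrices by an iteration in which passing from scale $2r$ to scale $r$ multiplies the matrix norm by at most $1+\Psi(\delta|n,\epsilon,\eta,L)$. Since there are $\le\log_2(1/s)+3$ dyadic steps between scale $8$ and scale $s$, this gives $|A_r|\le(1+\Psi(\delta))^{\log_2(1/r)}\le r^{-\Psi(\delta)}\le r^{-\epsilon}$ once $\delta$ is chosen small depending on $n,\epsilon,\eta,L$ (never on $s$), which is the heart of (5). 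Crucially, the accumulation is only in the matrix norm: the quality of $A_ru$ as a splitting map at scale $r$ is re-established at each step (the average Gram matrix being \emph{exactly} the identity after each Cholesky step) and stays at level $\Psi(\delta)$ throughout, not $j\Psi(\delta)$.

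\textbf{A priori regularity.} First I would record pointwise estimates for $u$. Integrating the Bochner formula $\tfrac12\lap\|\grad u^\alpha\|^2=\|\hess u^\alpha\|^2+\spa{\grad\lap u^\alpha,\grad u^\alpha}+\Ric(\grad u^\alpha,\grad u^\alpha)$ against a cut-off supported in $B_8(p)$, and integrating the $\lap$-term by parts so that only $|\lap u^\alpha|\le L$ (not a derivative of it) enters, together with $\Ric\ge-(n-1)\delta$ and hypothesis (1), yields $\dashint_{B_4(p)}\|\hess u^\alpha\|^2\le\Psi(\delta|n,L)$ and, by a standard gradient estimate for functions with bounded Laplacian on spaces with a lower Ricci bound, $\|\grad u^\alpha\|_{C^0(B_2(p))}\le C(n,L)$. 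After rescaling these hold at every scale $r\le2$, with the Hessian bound scale-invariant for the relevant normalized maps. In particular $\|B_r\|\le C(n,L)$ for all $r$, which already delivers the bound on $|A_r^{-1}|$ in (5) (a constant is $\le(1+\epsilon)r^{-\epsilon}$ for small $r$, and $B_8\approx\mathrm{Id}$ handles $r$ near $1$) and the gradient half of (6).

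\textbf{Definition of $A_r$ and reduction.} Assuming $B_r$ stays uniformly positive definite (established below), let $A_r$ be the unique lower triangular matrix with positive diagonal entries satisfying $A_rB_rA_r^T=\mathrm{Id}$, i.e.\ the Cholesky factor of $B_r^{-1}$; this makes (4) hold by construction. Then (3) follows because $\dashint_{B_r(p)}|\spa{\grad(A_ru)^\alpha,\grad(A_ru)^\beta}-\delta^{\alpha\beta}|\le|A_r|^2\dashint_{B_r(p)}|\spa{\grad u^\alpha,\grad u^\beta}-B_r^{\alpha\beta}|$, and the last average is controlled by $r^2\dashint_{B_r}\|\hess u\|^2\le\Psi(\delta)$ via Poincaré, so the product is $\le r^{-2\epsilon}\Psi(\delta)$ — but in fact, for the \emph{normalized} map the relevant Hessian energy at scale $r$ is itself $\Psi(\delta)$ scale-invariantly (it is part of the inductive invariant), so (3) holds with $\epsilon$ replaced by $\Psi(\delta)<\epsilon$; (6) then follows from the a priori estimates applied to the components of $A_ru$ once $|A_r|\le r^{-\epsilon}$. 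The core is the \textbf{one-step comparison}: if $\tilde u:=A_{2r}u$ satisfies $\dashint_{B_{2r}(p)}\spa{\grad\tilde u^\alpha,\grad\tilde u^\beta}=\delta^{\alpha\beta}$ with $\|\grad\tilde u\|_{C^0}$, $|\lap\tilde u|$ controlled, then $\dashint_{B_r(p)}\spa{\grad\tilde u^\alpha,\grad\tilde u^\beta}=\delta^{\alpha\beta}+O(\Psi(\delta|n,\epsilon,\eta,L))$ with $\Psi$ \emph{independent of $r$}; this forces $A_rA_{2r}^{-1}=\mathrm{Id}+O(\Psi(\delta))$, preserves positive definiteness, and gives the geometric bound above.

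\textbf{The one-step comparison — main obstacle.} This is where the hypothesis that $B_\rho(p)$ is $(\delta,K)$-Euclidean but not $(\eta,K+1)$-Euclidean for \emph{all} $\rho\in[s,8]$ is used; I would prove it by a rescaling-and-contradiction argument in the spirit of Cheeger--Colding and Cheeger--Naber. If it failed there would be pointed spaces $(M_i,p_i)$ with $\Ric\ge-(n-1)\delta_i$, $\delta_i\to0$, maps $u_i$ satisfying the hypotheses, and scales $\rho_i$ at which the conclusion is violated by a fixed amount. Rescaling $B_{\rho_i}(p_i)$ to unit size and passing to a pointed Gromov--Hausdorff limit, Theorem \ref{Quantitativesplitting} together with the uniform non-collapsing coming from the $(\delta_i,K)$-Euclidean property forces the limit to split off an $\mathbb{R}^{K}$-factor but — by the non-$(\eta,K+1)$-Euclidean property, which is scale-invariant on the whole interval — nothing more; the normalized $u_i$ then converge to a harmonic map $v:\mathbb{R}^{K}\times X\to\mathbb{R}^k$ (possible since $k\le K$) with $\spa{\grad v^\alpha,\grad v^\beta}\equiv\delta^{\alpha\beta}$, which is rigid: $v$ is an affine isometric embedding onto a $k$-plane in the Euclidean factor. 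Hence the bilinear forms, and therefore the Cholesky matrices, at scales $2r$ and $r$ agree in the limit, contradicting the definite violation. The points requiring care are the scale-invariance of the Euclidean/non-Euclidean dichotomy under the rescaling (guaranteed by the hypothesis on $[s,8]$), continuity of the Cholesky factor as a function of the form under the limit, and propagating the inductive invariant (gradient Gram matrix $L^1$-close to $\mathrm{Id}$ and small Hessian energy at the current scale) without accumulating its error — the error being reset at each Cholesky normalization while only $|A_r|$ accumulates geometrically.
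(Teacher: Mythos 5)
Your dyadic-scale Cholesky iteration is the same basic engine the paper uses: the paper organizes the iteration via $\bar s:=\inf S_\delta(\epsilon)$ rather than a dyadic loop, but the substance is identical, and the geometric accumulation in $|A_r|$ across comparable scales is exactly the paper's Sublemma \ref{GrowthOfMatrixNorm220515}. The genuine gap is in the rigidity step that re-improves the $L^1$ Gram estimate. You write that ``uniform non-collapsing coming from the $(\delta_i,K)$-Euclidean property'' lets you split off $\R^K$ and conclude that the limit $v$ is an affine isometric embedding. But the $(\delta_i,K)$-Euclidean hypothesis gives no non-collapsing at all: it says $B_r(p)$ is GH-close to $B_r(0^K,x^*)\subset\R^K\times X$ for an arbitrary metric space $X$, which may well be lower-dimensional. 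This is precisely the point the paper emphasizes after stating Theorem \ref{TransThm}: it differs from \cite[Proposition 7.7]{CJN21} exactly in not assuming non-collapsing. So the blow-up limit is $\R^K\times Y$ for a general RCD space $Y$, and ``which is rigid'' is not a Liouville theorem for smooth spaces. The actual ingredient is the gap theorem for harmonic functions of growth $|u(x)|\le C\,d(x,p)^{1+\epsilon}+C$ on $k$-splitting RCD spaces that are not $(\eta,K+1)$-Euclidean at any scale $\ge1$ (Theorem \ref{thm-gap-har-RCD}, from \cite{HH22}); and your uniform matrix-norm bound $|A_r|\le r^{-C(n)\epsilon}$ is needed precisely to give the growth estimate that makes it applicable. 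Without that theorem, the claim that $v$ is a linear combination of the $\R^K$-coordinates (hence $\spa{\grad v^\alpha,\grad v^\beta}$ constant) is unjustified. Relatedly, the $\R^K$-splitting of the limit is inherited directly from passing the $(\delta_i,K)$-Euclidean condition to the limit, not from Theorem \ref{Quantitativesplitting} — there is no excess hypothesis in play here.

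A smaller but real issue: your route to (3) through Bochner and Poincar\'e is circular at small scales. From the one-time Bochner bound $\dashint_{B_4(p)}\|\hess u\|^2\le\Psi(\delta)$, volume comparison down to scale $r$ only gives $r^2\dashint_{B_r(p)}\|\hess u\|^2\le C(n)r^{2-n}\Psi(\delta)$, which is not small near $r=s$ when $n>2$. To get a scale-invariant Hessian bound at scale $r$, you would need the $L^1$ Gram estimate (3) at a comparable scale — the very thing you are proving. The paper sidesteps this entirely by running volume comparison directly on the $L^1$ quantity $\dashint|\spa{\grad(A_\rho u)^\alpha,\grad(A_\rho u)^\beta}-\delta^{\alpha\beta}|$, losing only a factor $(\rho/r)^n$ per doubling step and then re-normalizing by Cholesky; the Hessian energy plays no role in controlling (3) and only appears afterward, once (3) is secured.
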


Since \cite{CJN21}, there have been various generalized versions of transformation theorems for almost splitting functions, such as \cite{BNS,WZ21,HH22,HP22}. A slight modification of the proof of \cite[Theorem 1.7]{HH22} gives the proof of Theorem \ref{TransThm}. Note that the main difference between Theorem \ref{TransThm} and \cite[Proposition 7.7]{CJN21} is that, Theorem \ref{TransThm} does not assume the manifold is non-collapsed. In fact, the proof of \cite[Theorem 1.7]{HH22} is along the same lines as the one of \cite{CJN21}, except that \cite[Lemma 7.8]{CJN21} is replaced by \cite[Theorem 3.8]{HH22} (see Theorem \ref{thm-gap-har-RCD} below). For readers' convenience, we give the sketched proof in appendix.

As an immediate corollary of the above theorem, we have the following non-degeneracy lemma.
\begin{lemma}\label{NonDegenrateLemma}
	There exists $\delta=\delta(n,L)>0$, such that the following holds. Let $(M,p)$ be an $n$-manifold with $\overline{B_{1}(p)}$ compact and $u:B_1(p)\to\R$ satisfies properties 
	
	\begin{enumerate}
		\item [(1)] $\dashint_{B_1(p)}|\|\grad u\|^2-1|\le\delta$,
		
		\item [(2)] $|\lap u|_{C^0(B_1(p))}\le L$.
	\end{enumerate}
	
	If $M$ also satisfies 
	\begin{itemize}
		\item [(3)] $\Ric\ge-(n-1)\delta$, and $\vol(B_1(\tilde p))\ge(1-\delta)w_n$, where $\pi:(\widetilde{B_1(p)},\tilde p)\to (B_1(p),p)$ is the universal covering,

	\end{itemize}
	then $\myd u$ is non-degenerate at $p$.
\end{lemma}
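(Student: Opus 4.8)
The plan is to argue by contradiction using a blow-up (rescaling) argument combined with the Transformation Theorem \ref{TransThm}. Suppose no such $\delta$ exists; then there is a sequence $(M_i, p_i)$ with $\overline{B_1(p_i)}$ compact, functions $u_i : B_1(p_i) \to \R$ satisfying (1)--(2) with $\delta = \delta_i \to 0$ and a uniform Laplacian bound $L$, such that $\Ric_{M_i} \ge -(n-1)\delta_i$ and $\vol(B_1(\tilde p_i)) \ge (1-\delta_i) w_n$ on the universal cover, yet $\myd u_i$ is degenerate at $p_i$, i.e.\ $\grad u_i(p_i) = 0$. The almost-maximal rewinding volume hypothesis (3), by volume convergence / rigidity (Colding's volume continuity applied to the sequence of universal covers), forces $(\widetilde{B_1(p_i)}, \tilde p_i) \to (B_1(0^n), 0)$ in the Gromov-Hausdorff sense; in particular there are only trivial local fundamental groups in the limit, and downstairs $B_r(p_i)$ is $(\Psi(\delta_i|n), n)$-Euclidean for all $r \in [s_i, 1]$ with $s_i \to 0$ — so the hypotheses of Theorem \ref{TransThm} apply with $K = k = 1$ (here $u_i$ is close to a coordinate function so it is never $(\eta, 2)$-Euclidean is not the relevant direction; rather $K = n$ and $k = 1 \le K$).

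Next I would apply the Transformation Theorem to $u_i$ on $B_8$ (after a harmless rescaling so the hypotheses match, or by noting the theorem's scale-invariant form): there exist scalars $a_{i,r} > 0$ with $|a_{i,r}| + |a_{i,r}^{-1}| \le (1+\epsilon) r^{-\epsilon}$ such that $a_{i,r} u_i$ is a genuinely normalized almost-splitting function on $B_r(p_i)$, with $\dashint_{B_r(p_i)} \|\grad(a_{i,r} u_i)\|^2 = 1$, small Hessian, and $|\lap(a_{i,r} u_i)|_{C^0(B_r)} \le C(n,L) r^{-\epsilon}$. The point of the transformation is that it upgrades the $L^2$-closeness of the gradient to a controlled quantity \emph{at every scale down to $s_i$}, with the distortion factor $r^{-\epsilon}$ growing only mildly. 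Fixing a small scale $r_0 > 0$ (chosen after $\epsilon$), I would then pass to a Gromov-Hausdorff limit of $(M_i, p_i, a_{i,r_0} u_i)$: since the rewinding volume is almost maximal and curvature almost non-negative, the limit space is $(\R^n, 0)$, and by the standard Cheeger-Colding compactness/stability theory for harmonic-type almost-splitting maps (elliptic estimates in the collapsed/non-collapsed sense, using that the splitting is non-degenerate after normalization), the functions $a_{i,r_0} u_i$ converge in $C^1_{loc}$ to an affine linear function $u_\infty$ on $\R^n$ with $\|\grad u_\infty\| = 1$, hence $\grad u_\infty(0) \neq 0$.

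Finally, the contradiction: the $C^1_{loc}$ convergence gives $\grad(a_{i,r_0} u_i)(p_i) \to \grad u_\infty(0) \neq 0$, so $\grad(a_{i,r_0} u_i)(p_i) \neq 0$ for large $i$; since $a_{i,r_0} > 0$, this forces $\grad u_i(p_i) \neq 0$, contradicting the assumed degeneracy. Therefore the desired $\delta(n,L)$ exists and $\myd u$ is non-degenerate at $p$. I expect the main obstacle to be justifying the \emph{$C^1$ (pointwise gradient) convergence} of the normalized maps at the point $p_i$ itself — the Transformation Theorem controls gradients only in the $L^2$ sense on balls and the Laplacian in $C^0$, so one must invoke interior elliptic/Cheng-Yau gradient estimates (valid under $\Ric \ge -(n-1)\delta_i$) together with the Bochner-formula Hessian bound to promote $L^2$-gradient control to uniform $C^1$ control on a slightly smaller ball, and then use the non-collapsing from almost-maximal rewinding volume to ensure the limit is genuinely $\R^n$ rather than a lower-dimensional or singular space where "non-degenerate differential" would be meaningless. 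This is precisely where hypothesis (3) is essential and where the argument genuinely departs from the bounded-sectional-curvature setting.
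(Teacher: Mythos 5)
Your proposal shares the high-level strategy with the paper (lift to the universal cover, reduce to the almost-maximal-volume/non-collapsed case, invoke Theorem \ref{TransThm} to produce normalized almost-splitting functions at small scales, then promote $L^2$ gradient control to pointwise non-degeneracy), but there is a genuine gap at the step you yourself flag as the main obstacle: the claimed $C^1_{\mathrm{loc}}$ convergence of $a_{i,r_0}u_i$ at a \emph{fixed} scale $r_0>0$. Under only $\Ric\ge -(n-1)\delta_i$ and almost-maximal volume of $B_1(\tilde p_i)$, there is no uniform lower bound on the $C^{1,\alpha}$-harmonic radius at $p_i$; the ambient manifolds converge only in the Gromov--Hausdorff sense, not in $C^{1,\alpha}$, so there is no coordinate framework in which uniform $C^1$ or Schauder estimates for $u_i$ on a ball of fixed radius $r_0$ make sense. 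Cheng--Yau plus Bochner give you $C^0$ gradient bounds and $L^2$ Hessian bounds, but not the equicontinuity of $\grad u_i$ needed for $C^1$ convergence, and certainly not pointwise evaluation at $p_i$ in the limit.

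The paper resolves exactly this by stopping the Transformation Theorem not at a fixed scale but at the (possibly very small, $i$-dependent) $C^1$-harmonic radius $r_h$. Rescaling $(M,p,g)$ by $r_h^{-2}$ and considering $\bar u=\tfrac1{r_h}A_{r_h}u$, Theorem \ref{TransThm} guarantees $\bar u$ still has $L^2$-gradient close to $1$, $\|\bar\lap\bar u\|_{C^0}\le r_h^{1-\Psi}L$ (which is \emph{small}), and $\|\bar\grad\bar u\|\le C(n,L)$, despite the normalization $A_{r_h}\le r_h^{-\Psi}$. In the harmonic coordinates that exist by definition of $r_h$, standard interior elliptic estimates then give $\|\bar u\|_{C^{1,1/2}(B_{1/2})}\le C(n,L)$, after which a short measure-theoretic argument (the bad set $A$ where $|\|\bar\grad\bar u\|^2-1|$ is large has small volume, so there is a nearby good point $\bar q$, and the H\"older bound transfers the gradient estimate from $\bar q$ to $\bar p$) yields pointwise non-degeneracy directly, with no limiting argument at all. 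In short: the essential missing idea in your proposal is to let the scale at which you invoke elliptic regularity be the harmonic radius itself rather than any fixed $r_0$; the Transformation Theorem's role is precisely to make the constants survive this $i$-dependent rescaling with only an $r_h^{-\epsilon}$ loss. Without this step, the passage from $L^2$ gradient control to a pointwise statement at $p_i$ does not go through. (A secondary, minor omission: the reduction to the universal cover should be justified via the $L^2$ comparison of \cite[Lemma 1.6]{KW} before applying Cheeger--Colding and Theorem \ref{TransThm} upstairs; your sketch treats this step rather loosely.)
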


\begin{proof}

	Note that $\tilde u:=u\circ\pi: B_{1}(\tilde p)\to\R$ satisfies properties 
	\begin{enumerate}
		\item [(1')] $\dashint_{B_1(\tilde p)}|\|\grad \tilde u\|^2-1|\le C(n)\delta$,
		
		\item [(2')] $\|\lap \tilde u\|_{C^0(B_1(\tilde p))}\le L$,
	\end{enumerate}
	where (1') is according to \cite[Lemma 1.6]{KW}. And the non-degeneracy of $\myd u(p)$ and $\myd\tilde u(\tilde p)$ coincide. Hence, w.l.g, we may assume $\vol(B_1(p))\ge(1-\delta)w_n$, and normalize $u(p)=0$. By Cheeger-Colding (\cite{CC96}), $d_{GH}(B_r(p),B_r(0^n))\le r\Psi(\delta|n)$ for any $r\in[0,1]$. Now by Theorem \ref{TransThm}, for any $r\in(0,\frac18)$, there exists a positive number, $A_r=A_{r,p}$, with $A_r\le r^{-\Psi(\delta|n)}$, s.t. $A_ru|_{B_{r}(p)}$ satisfies 
	\begin{equation*}\label{L^20517}
		\dashint_{B_r(p)}|\|\grad (A_ru)\|^2-1|\le \Psi(\delta|n,L).
	\end{equation*}
	
	Let $r_h\le 1$ be the $C^{1}$-harmonic radius at $p$, i.e., $r_h$ satisfies that, there exists a harmonic coordinate map, $x=(x^1,...,x^n):(B_{r_h}(p),p)\to(\R^n,0^n)$ satisfying that, for each $i$, $\lap x^i=0$ and the metric components $g_{ij}=g(\frac{\partial}{\partial x^i},\frac{\partial}{\partial x^j})$ satisfies that 
	\begin{equation}\label{C^1NormOfg}
		\|g_{ij}-\delta_{ij}\|_{C^0(B_{r_h}(p))}+r_h\|\frac{\partial g_{ij}}{\partial x^{k}}\|_{C^0(B_{r_h}(p))}\le 10^{-n}.
	\end{equation}
	Put $(\bar M,\bar p,\bar g,\bar x^i):=(M,p,r_h^{-2}g,r_h^{-1}x^i)$, and $\bar u:=\frac1{r_h}A_{r_h}u$. Then according to Theorem \ref{TransThm}, the restriction $\bar u:(B_{1}(\bar p),\bar p)\to(\R,0)$ satisfies,
	\begin{equation}\label{L^2gradient}
		\dashint_{B_1(\bar p)}|\|\bar \grad \bar u\|^2-1|\le\Psi(\delta|n,L).
	\end{equation}
	\begin{equation}\label{UseInElliptic}
	\|\bar\lap\bar u\|_{C^0(B_{1}(\bar p))}\le r_h^{1-\Psi(\delta|n)}L,\quad \|\bar\grad \bar u\|_{C^0(B_{1}(\bar p))}\le C(n,L).
	\end{equation}

Then by elliptic estimate, and (\ref{UseInElliptic}), we have,
	\begin{equation}\label{EllipticEstimate}
		\|\bar u \|_{C^{1,\frac12}(B_{\frac12}(\bar p))}\le C(n)(\|\bar\lap \bar u\|_{C^0(B_{1}(\bar p))}+\|\bar u\|_{C^\frac12(B_1(\bar p))})\le C(n,L).
	\end{equation}
	
	Put $A=\{\bar x\in B_{1}(\bar p)|\large|\|\bar \grad \bar u(\bar x)\|^2-1|>\sqrt{\Psi(\delta|n,L)}\}$. By (\ref{L^2gradient}), $\vol(A)\le \sqrt{\Psi(\delta|n,L)}\vol B_1(\bar p)$. We may assume $\bar p\in A$, otherwise the proof is finished. For any $B_{\eta}(\bar p)\subset A$, $\eta\le 1$, by volume comparison, we have $$\sqrt{\Psi(\delta|n,L)}\ge \frac{\vol B_{\eta}(\bar p)}{\vol B_1(\bar p)}\ge c(n)\eta^n ,$$which means $\eta\le c(n)^{-n}\Psi(\delta|n,L)^{\frac1{2n}}=:\Psi_1(\delta|n,L)$. So there exists $\bar q\in B_1(\bar p)\backslash A$ with $d_{\bar g}(\bar q,\bar p)\le \Psi_1(\delta|n)$. Then by estimate (\ref{EllipticEstimate}), $$\large|\frac{\partial \bar u}{\partial \bar x^i}(\bar p)-\frac{\partial \bar u}{\partial \bar x^i}(\bar q)\large|\le C(n,L)d_{\bar g}(\bar p,\bar q)^{\frac12}\le C(n,L)\sqrt{\Psi_1(\delta|n,L)}=:\Psi_2(\delta|n,L).$$Combining the above estimate and (\ref{C^1NormOfg}) yields,
	$$|\|\bar\grad \bar u(\bar p)\|^2-1|\le |\|\bar\grad \bar u(\bar q)\|^2-1|+C(n)\Psi_2(\delta|n,L)\le \sqrt{\Psi(\delta|n,L)}+C(n)\Psi_2(\delta|n,L),$$which implies $\myd u(p)$ is not degenerate provided $\delta$ is small depending on $n,L$. 
\end{proof}
\begin{remark}\label{NondegForOtherCase}
	If condition (3) in Lemma \ref{NonDegenrateLemma}, is replaced by one of the following conditions,
	\begin{enumerate}
	\item [(4)] $\Ric\ge-(n-1)\delta$, and conjugate radius $r_{c}\ge 1$,
	\item [(5)] Sectional curvature $K\ge-\delta$,
	
	\end{enumerate}
	
	then the same conclusion holds.
	
	In fact, for condition (4): According to \cite{AC92}, the $C^{0,\alpha}$-norm of pullback metric on tangent space $(T_pM,0_p)$ is uniformly bounded from above by $C(n)>0$. Specially, there exists $r(n)\in(0,1)$ s.t. $d_{GH}(B_{r(n)}(0_p),B_{r(n)}(0^n))\le\Psi(\delta|n)r(n)$. Then the remaining proof is the same as the case (3). For condition (5), \cite[Theorem 1.1]{HH22} asserts that for manifolds with $K\ge-\delta$, any harmonic $\delta$-splitting function $u:B_1(p)\to\R$ is non-degenerate at $p$. Indeed the condition that $u$ is harmonic is not essential by Theorem \ref{TransThm} above. That is, if $K\ge-\delta$, and $u:B_1(p)\to\R$ satisfies (1),(2) in Lemma \ref{NonDegenrateLemma}, for $\delta\le \delta(n,L)$, then $\myd u$ is non-degenerate at $p$.
	
\end{remark}

Now we are ready to prove Theorem \ref{FiniteToplgType-Main} and Corollary \ref{Corrollaries-FinitTopType}.

\subsection{Proof of Theorem \ref{FiniteToplgType-Main}}\quad

	By Theorem \ref{MorseFunctionExistence}, there exist $R>0$, and a smooth function $h:M\backslash B_R(p)\to\R$ satisfying that for any $x\in M\backslash B_R(p)$,
	
	\begin{itemize}
		\item [(1)] $\frac{|h(x)-r_p(x)|}{s r_p(x)^{\alpha}}\le\Psi(s R^{\alpha-1},\delta|n)$,
		
		\item [(2)] Restriction $h:B_{s r_p(x)^\alpha}(x)\to\R$ is a $\Psi(s R^{\alpha-1},\delta|n)$-splitting function.
		
		\item [(3)] $s r_p(x)^\alpha|\lap h(x)|\le \Psi(s R^{\alpha-1},\delta|n)$. 
		
	\end{itemize}

	For each $x\in M\backslash B_R(p)$, by properties (2), (3) of $h$ and (\ref{AlmostMaximalRewindingVolume}), applying Lemma \ref{NonDegenrateLemma} to $$u:= \frac{1}{sr_p(x)^{\alpha}}h|_{B_1(x,(sr_p(x)^\alpha)^{-1}d)}:B_1(x,(sr_p(x)^\alpha)^{-1}d)\to\R,$$ shows $h$ is non-degenerate at $x$, provided $sR^{\alpha-1},\delta$ sufficiently small depending only on $n$. Combining property (1) of $h$, there exists a large $C>0$, s.t., $h|_{h^{-1}(C,\infty)}:h^{-1}(C,\infty)\to(C,\infty)$ is non-degenerate everywhere and proper. This implies $h|_{h^{-1}(C,\infty)}:h^{-1}(C,\infty)\to(C,\infty)$ is a trivial fiber bundle with compact fiber. Hence $M$ is diffeomorphic to the bounded domain $M\backslash h^{-1}[2C,\infty)$.
\begin{remark}\label{RemarkOnNonDeg}
	Note that the construction of $h$ in Theorem \ref{MorseFunctionExistence} only involves the growth conditions of excess function. Hence due to Lemma \ref{NonDegenrateLemma} and Remark \ref{NondegForOtherCase}, our approach to Theorem \ref{FiniteToplgType-Main} also applies in the case of replacing local almost maximal rewinding volume by correspondent assumptions on sectional curvature or conjugate radius bound.
\end{remark}

\subsection{Proof of Corollary \ref{Corrollaries-FinitTopType}}\quad

(1) By definition of Euclidean volume growth, there exists $\nu>0$ s.t. $\lim_{r\to\infty}\frac{\vol(B_r(p))}{r^n}= \nu$. Hence by volume comparison, $\frac{\vol(B_r(p))}{r^n}-\nu\to 0+$ as $r\to\infty$. Now applying Proposition \ref{FiniteToplgType-VariousGrowthCondition}(3) for $\alpha=1$, we see that, for any $\delta>0$, for $x\in M$ outside a compact set, $\frac{e_p(x)}{r_p(x)}\le \delta$. We claim that for any $\epsilon>0$, there exists an $s=s(\epsilon,M)\in(0,1)$ s.t. for any $x\in M$ outside a compact subset, $$\vol(B_{sr_p(x)}(x))\ge (1-\epsilon) w_n\left(sr_p(x)\right)^n.$$ Then required conclusion follows by Theorem \ref{FiniteToplgType-Main} for $\alpha=1$.

Argue by contradiction; suppose there exist $\epsilon_0>0$, $s_i\to0$, $x_i\in M$ with $r_p(x_i)\to\infty$, s.t. $\vol(B_{s_ir_p(x_i)}(x_i))< (1-\epsilon_0) w_n\left(s_ir_p(x_i)\right)^n$. Put $r_i=r_p(x_i)$ and $(M_i,d_i)=(M,r_i^{-1}d)$. Passing to a subsequence, we may assume $(M_i,p,x_i)\GH(X,p_\infty,x_\infty)$. Since $M$ has Euclidean volume growth, by Theorem \ref{Thm-VolumeCone}, $X$ is a metric cone $C(Y)$ with cross section $Y$ of $\diam(Y)\le\pi$. By conditions, $Y$ is a smooth manifold. Specially, there exists $s_0>0$, s.t., $\vol(B_{s_0}(x_\infty))\ge(1-\frac12\epsilon_0)w_ns_0^n$. Using Colding's volume convergence theorem (\cite{Co97}), for $i$ large, $\vol(B_{s_0}(x_i,M_i))\ge (1-\frac23\epsilon_0)w_ns_0^n$. For $i$ large, s.t. $s_i<s_0$, by volume comparison, $\vol(B_{s_i}(x_i,M_i))\ge (1-\frac23\epsilon_0)w_ns_i^n$. That is, $\vol(B_{s_ir_i}(x_i))\ge (1-\frac23\epsilon_0)w_n(s_ir_i)^n$ which contradicts to the contradicting assumption.

(2) Applying Theorem \ref{FiniteToplgType-Main} for $\alpha=0$, and $s=\rho$ and combining Proposition \ref{FiniteToplgType-VariousGrowthCondition}(1) or (2) give the proof.

(3)  We claim that for any $\epsilon>0$, there exists an $s=s(\epsilon,M)\in(0,1)$ s.t. for any $x\in M$ outside a compact subset, $$\vol(B_{s}(x))\ge (1-\epsilon) w_ns^n.$$ Then Proposition \ref{FiniteToplgType-VariousGrowthCondition} (1) or (3) gives the excess growth condition (\ref{ExcessGrowthCondition}) for $\alpha=0$ and $s$ which gives the proof by applying Theorem \ref{FiniteToplgType-Main}. 

In below we prove the claim.

\textbf{The volume case:} $\frac{\vol( B_r(p))}{r^4}=\nu+o(r^{-\frac94})$, for some $\nu>0$.

By the volume growth conditions and relative volume comparison, obvious for any $x\in M$, $\vol(B_1(x))\ge \nu>0$. We argue by contradiction; if there exist $\epsilon_0\in(0,1)$, $s_i\to0$, $x_i\in M$ with $r_p(x_i)\to\infty$ s.t. $\vol (B_{s_i}(x_i))<(1-\epsilon_0)w_4s_i^n$. Passing to a subsequence, we may assume $(M,x_i)\GH(X,x_\infty)$, where the convergence is non-collapsed. Note that by Proposition \ref{FiniteToplgType-VariousGrowthCondition} (3), $e_p(x_i)\le \delta_i\to0$, hence by quantitative splitting theorem \ref{Quantitativesplitting}, $X$ splits a line, i.e., $X=\R\times Y$, where $Y$ is a length space of Hausdorff dimension $3$. According to Cheeger-Naber's codimension $4$ theorem (\cite{CN}), the subset of singular points of $X$ has at most dimension $0$. This shows $Y$ contains no singular points, so does $X$. Specially, there exists $s_0>0$, s.t. $\vol(B_{s_0}(x_\infty))\ge (1-\frac12\epsilon_0)w_4s_0^4$. Using Colding's volume convergence theorem (\cite{Co97}), for $i$ large, $\vol(B_{s_0}(x_i))\ge (1-\frac23\epsilon_0)w_4s_0^4$. For $i$ large, s.t. $s_i<s_0$, by volume comparison, $\vol(B_{s_i}(x_i))\ge (1-\frac23\epsilon_0)w_4s_i^4$ which yields a contradiction.

\textbf{The diameter case:} $\sup_{r>0}\D_p(r)<\infty$.

By Lemma \ref{SmallEsentialDiamterImpliesSmallExcess}, we have $M$ is isometric to $\R\times N$ with $N$ compact, or $\sup\diam (\partial B_r(p))<\infty$. It suffices to prove in the latter case.

Up to a scaling, we may assume $\diam (\partial B_r(p))\le \frac18$ and $0\le \Ric_M\le C<\infty$.

Claim (A): there exists $v>0$, s.t. for any $x\in M$, $\vol(B_1(x))\ge v>0$. Let $\gamma$ be a ray with unit speed from $p$ and $b_\gamma(x)=\lim_{t\to\infty}\{t-d(x,\gamma(t))\}$ be the Busemann function associated to $\gamma$. By excess estimate, $0\le r_p(x)-b_\gamma(x)\le \frac{1}{r_p(x)^{\frac{1}{3}}}$ for all $x$ outside a compact subset. Combining the fact $\diam (\partial B_r(p))\le \frac18$, for some large $t_0>0$, we have $b_{\gamma}^{-1}(t-0.5,t+0.5)\subset B_{1}(\gamma(t))$ for all $t\ge t_0$. Applying \cite[Lemma 20]{Sor98}, one concludes $\vol(B_1(\gamma(t)))\ge \vol(b_{\gamma}^{-1}(t-0.5,t+0.5))\ge \vol(b_{\gamma}^{-1}(t_0-0.5,t_0+0.5))$. Put $v:=\min\{\vol(b_{\gamma}^{-1}(t_0-0.5,t_0+0.5)),\inf_{b_\gamma(x)\le t_0}\vol(B_1(x))\}$. For any $x\in M$, $B_{2}(x)\supset B_1(\gamma(r_p(x)))$. Now by volume comparison, $$\vol(B_1(x))\ge\frac1{16}\vol(B_2(x))\ge  \frac1{16}\vol(B_1(\gamma(r_p(x))))\ge \frac{v}{16}.$$ Hence the Claim (A) follows. The remain is similar as the volume case.

Now the proof of Corollary \ref{Corrollaries-FinitTopType} is complete.

\begin{remark}
	There is another proof of Corollary \ref{Corrollaries-FinitTopType} (1) which verifies the non-degeneracy of function $$b(\cdot):=\left(\frac{\nu}{\omega_n}G(p,\cdot)\right)^{\frac{1}{2-n}},$$ introduced by Colding-Minicozzi in \cite{CoM97}, instead of $h$, where $G(x,y)$ is the minimal positive Green's function on $M$. In fact, by \cite{CoM97}, under the Euclidean volume growth condition, we have
	\begin{equation*}
		|\grad b|\le 1,
	\end{equation*}
	and for each $\delta>0$, there exists $R_0>0$ s.t., for all $R\ge R_0$,
	\begin{equation*}
		\sup_{x\in B_R(p)} R^{-1}|b(x)-r_p(x)|+\dashint_{b\le R}|\|\grad b\|^2-1|^2+\dashint_{b\le R}|\hess(b^2)-2g|^2<\delta.
	\end{equation*}
Then applying Lemma \ref{NonDegenrateLemma} to $b$ finishes the proof.
\end{remark}

\section{appendix}

\subsection{Proof of Lemma \ref{SmallVolumeGrowth}}

Let $\{B_1(y_1),...B_1(y_N)\}$ be a maximal set of disjoint balls with $y_1,...,y_N\in\Sigma_r$. Then $\Sigma_r\subset\cup _{i=1}^NB_2(y_i)\subset A_{r-2,r+2}(p)$. Hence, $$N\le\frac{\vol(A_{r-2,r+2}(p))}{v_p(r)}.$$

Since $\Sigma_r$ is connected, for any $x_1,x_2\in\Sigma_r$, let $\gamma:[0,1]\to\Sigma_r$ be a curve connecting $x_1,x_2$. Now we find a path from $x_1$ to $x_2$ as follows. Putting $t_0=0$, choose an arbitrary $B_2(y_{i_0})$ s.t. $x_1=\gamma(t_0)\in B_2(y_{i_0})$, and put $t_1=\sup\{t\in[0,1]|\gamma(t)\in B_2(y_{i_0})\}$. Assuming $t_k\in[0,1)$ is defined, choose an arbitrary $B_2(y_{i_k})$ s.t. $\gamma(t_k)\in B_2(y_{i_k})$, and put $t_{k+1}=\sup\{t\in[0,1]|\gamma(t)\in B_2(y_{i_k})\}$. Note that $t_k<t_{k+1}$. Repeat this procedure provided $t_k<1$. Hence we obtain $0=t_0<t_1<...<t_K=1$ and distinct $B_2(y_{i_0}),...,B_2(y_{i_{K-1}})$ s.t. $\gamma(t_k),\gamma(t_{k+1})\in \overline{B_{2}(y_{i_k})}$, $k=0,...,K-1$. Now we have $$d(x_1,x_2)\le\sum_{k=0}^{K-1} d(\gamma(t_k),\gamma(t_{k+1}))\le 4K\le 4N\le 4\frac{\vol(A_{r-2,r+2}(p))}{v_p(r)}.$$ 

\subsection{Proof of Lemma \ref{LargeVolumeGrowth}}

Let $x\in\partial B_r(p)\backslash R_p$ and $h>0$ s.t. $B_h(x)\cap R_p=\emptyset$. Note that $h\le r$ and $B_h(x)\cup R_p(r-h,r+h)\subset A_{r-h,r+h}(p)$, where $R_p(r-h,r+h)=\{y\in R_p|r-h<r_p(y)<r+h\}$. Then $$\vol(B_h(x))+\vol(R_p(r-h,r+h))\le \vol(A_{r-h,r+h}(p)).$$
By relative volume comparison, $$\vol(B_h(x))\ge \nu h^n,$$
$$\frac{\vol(R_p(r-h,r+h))}{(r+h)^n-(r-h)^n}\ge \frac{\vol(R_p(r,R))}{R^n-r^n}\to\nu,\, \text{as }R\to\infty,$$
$$\frac{\vol(A_{r-h,r+h}(p))}{(r+h)^n-(r-h)^n}\le \frac{\vol(B_r(p))}{r^n},$$ where the convergence of the second line is by the fact $\frac{\vol(B_R(p)\cap R_p)}{R^n}\to\nu$ (see \cite[Lemma 4 ]{OSY}). Hence, $$ h^n\le\nu^{-1} \left(\frac{\vol(B_r(p))}{r^n}-\nu\right)\left((r+h)^n-(r-h)^n\right).$$Using the fact that $(r+h)^n-(r-h)^n\le 2n h(r+h)^{n-1}$ gives the desired inequality.

\subsection{Sketched Proof of Theorem \ref{TransThm}}

It just need to prove for $\epsilon\le\epsilon(n,\eta,L)$, which will be specified later.

Firstly, according to \cite[Corollary 3.3]{HP22}, $\|\grad u^\alpha\|_{C^0(B_2(p))}\le C(n,L)$.

Let $S_\delta(\epsilon)$ be the set of those $r\in[s,1]$ s.t. there exits a $k\times k$ lower triangle matrix $A_r$ satisfying (3) and (4). 

\begin{sublemma}\label{SimplieFact}
	For any $s'\in(0,1)$, if $\delta\le C(n,s',L)^{-1}\epsilon$ for some $C(n,s',L)\ge1 $, then $[s',1]\subset S_\delta(\epsilon)$. 
\end{sublemma}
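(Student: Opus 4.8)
The plan is to establish Sublemma~\ref{SimplieFact} by a direct compactness/continuity argument on the interval $[s',1]$, showing that on this fixed sub-interval the transformation matrices can be produced from the hypothesis by a quantitative Gram--Schmidt procedure, and that the error in each of conditions (3), (4) is controlled by $\delta$ scaled by a constant depending only on $n, s', L$. First I would record the key preliminary fact, already cited above, that $\|\grad u^\alpha\|_{C^0(B_2(p))}\le C(n,L)$, which bounds all the bilinear forms $\langle\grad u^\alpha,\grad u^\beta\rangle$ uniformly on $B_2(p)$; combined with relative volume comparison (here $\Ric\ge -(n-1)\delta$, $\delta$ small) this gives a two-sided comparison $c(n)\le \vol(B_r(p))/\vol(B_1(p))\le C(n)$ for all $r\in[s',1]$, so averages over $B_r(p)$ and over $B_1(p)$ differ only by a factor $C(n,s')$.

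Next I would define the Gram matrix $G_{r}^{\alpha\beta}:=\dashint_{B_r(p)}\langle\grad u^\alpha,\grad u^\beta\rangle$. By hypothesis (1) together with the volume comparison just noted, $|G_r^{\alpha\beta}-\delta^{\alpha\beta}|\le C(n,s')\delta$ for every $r\in[s',1]$; in particular, if $\delta$ is small enough (depending only on $n,s'$) the symmetric matrix $G_r$ is positive definite and close to the identity. Let $A_r$ be the matrix given by Cholesky factorization of $G_r^{-1}$, i.e.\ the unique lower-triangular matrix with positive diagonal entries such that $A_r G_r A_r^{\mathsf T}=I_k$. Then (4) holds exactly: $\dashint_{B_r(p)}\langle\grad(A_ru)^\alpha,\grad(A_ru)^\beta\rangle=(A_r G_r A_r^{\mathsf T})^{\alpha\beta}=\delta^{\alpha\beta}$. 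Since $G_r\to I$ in norm as $C(n,s')\delta\to0$, the Cholesky factor satisfies $|A_r-I|\le C(n,s',L)\,\delta$ (the map $G\mapsto A$ is smooth near $I$, with derivative bounded in terms of $n$). Then condition (3) follows from (4) by the triangle inequality: $\dashint_{B_r(p)}|\langle\grad(A_ru)^\alpha,\grad(A_ru)^\beta\rangle-\delta^{\alpha\beta}|\le \dashint_{B_r(p)}|\langle\grad(A_ru)^\alpha,\grad(A_ru)^\beta\rangle-(A_rG_rA_r^{\mathsf T})^{\alpha\beta}|$, and this $L^1$ oscillation is bounded by $|A_r|^2$ times the oscillation of $\langle\grad u^\alpha,\grad u^\beta\rangle$ about its average, which by hypothesis (1) and the volume comparison is at most $C(n,s',L)\delta$. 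Choosing $C(n,s',L)$ large enough that $C(n,s',L)\delta\le\epsilon$ whenever $\delta\le C(n,s',L)^{-1}\epsilon$ then gives (3), so $r\in S_\delta(\epsilon)$ for every $r\in[s',1]$, i.e.\ $[s',1]\subset S_\delta(\epsilon)$.

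The only subtlety — and the step I expect to require the most care — is the quantitative control $|A_r-I|\le C(n,s',L)\delta$ on the Cholesky factor and the resulting $L^1$-estimate for (3): one must check that the constant in "$G\mapsto$ its Cholesky factor is Lipschitz near $I$" genuinely depends only on $n$ (which is clear since $G_r$ ranges in a fixed small neighborhood of $I_k$ in the space of $k\times k$ matrices, $k\le n$), and that the passage from the pointwise bound on $\|\grad u^\alpha\|$ to the $L^1$-oscillation bound for the transformed forms does not secretly smuggle in dependence on $r$ beyond the harmless volume-ratio factor. Everything else is bookkeeping with relative volume comparison and the triangle inequality; no deeper structure of the manifold (no non-collapsing, no Reifenberg hypothesis) is needed for this sub-interval statement, which is precisely why the sublemma is isolated — it handles the "easy", scale-bounded-away-from-zero part, leaving the genuine content of Theorem~\ref{TransThm} to the iteration down to scale $s$.
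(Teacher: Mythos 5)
Your proof is correct and takes essentially the same approach as the paper: define the Gram matrix $B_r^{\alpha\beta}=\dashint_{B_r(p)}\langle\grad u^\alpha,\grad u^\beta\rangle$, show it is $C(n,s')\delta$-close to $I_k$ via the monotone integral inclusion $B_r(p)\subset B_4(p)$ and relative volume comparison, take $A_r$ to be the Cholesky factor so that (4) holds exactly and $|A_r-I_k|\le C(n,s')\delta$, and then deduce (3) by an $L^1$-oscillation estimate. One small remark: the pointwise gradient bound $\|\grad u^\alpha\|_{C^0(B_2(p))}\le C(n,L)$ is not actually needed here (so the constant $C$ need only depend on $n,s'$, not $L$); the $L^1$ hypothesis (1) together with the volume ratio bound already controls $|B_r^{\alpha\beta}-\delta^{\alpha\beta}|$ and the oscillation of $\langle\grad u^\alpha,\grad u^\beta\rangle$ about its average.
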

\begin{proof}
	For $r\in[s',1]$, putting $B_r^{\alpha\beta}:=\dashint_{B_r(p)}\spa{\grad u^\alpha,\grad u^\beta}$, by volume comparison,
	\begin{align*}
		|B_r^{\alpha\beta}-\delta^{\alpha\beta}|\le\dashint_{B_r(p)}|\spa{\grad u^\alpha,\grad u^\beta}-\delta^{\alpha\beta}|\le\frac{\vol(B_4(p))}{\vol (B_r(p))}\dashint_{B_4(p)}|\spa{\grad u^\alpha,\grad u^\beta}-\delta^{\alpha\beta}|\le C(n)s'^{-n}\delta
	\end{align*}
	which implies $B_r^{\alpha\beta}$ is positive defined provided $\delta$ is small depending on $n,s'$. By Cholesky decomposition, there exists a unique $k\times k$ lower triangle matrix $A_r$ with positive diagonal entries s.t. $A_rB_rA_r^T=I_k$. It's not hard to see $|A_r-I_k|\le C(n,s')\delta$. Hence $A_r$ satisfies Theorem \ref{TransThm} (3) and (4).
\end{proof}

According to the above sublemma, for $\delta\le C(n,L,0.1)^{-1}\epsilon$, $[\frac1{10},1]\subset S_\delta(\epsilon)$. Specially, $S_\delta(\epsilon)\neq\emptyset$. Let $\bar s=\inf S_\delta(\epsilon)$.

\begin{sublemma}\label{GrowthOfMatrixNorm220515}
	
	For $\bar s\le t\le r\le 1$, $\alpha=1,...,k$, $$|A_tA_r^{-1}|+|A_rA_t^{-1}|\le(1+C(n)\epsilon)\left(\frac rt\right)^{C(n)\epsilon},$$$$|A_t|+|A_t^{-1}|\le (1+C(n)\epsilon)t^{-C(n)\epsilon},$$
	$$|\lap (A_ru)^\alpha|_{C^0(B_r(p))}\le C(n,L)r^{-C(n)\epsilon},\quad\|\grad (A_ru)^\alpha\|_{C^0(B_r(p))}\le C(n,L).$$
\end{sublemma}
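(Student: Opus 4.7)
The plan is to establish all three inequalities by a dyadic iteration on scale, which is the standard Cheeger–Jiang–Naber framework: prove a one-step comparison between $A_r$ and $A_t$ when $t\in[r/2,r]$, then chain $\log_2(r/t)$ such steps to obtain the polynomial factor $(r/t)^{C(n)\epsilon}$. For the one-step comparison, fix $r,t\in[\bar s,1]$ with $r/2\le t\le r$ and set $C:=A_tA_r^{-1}$. Since $A_tu=C(A_ru)$, property (4) of Theorem~\ref{TransThm} at scale $t$ rewrites as $C M_{r,t}C^T=I_k$, where
\[
M_{r,t}^{\alpha\beta}:=\dashint_{B_t(p)}\spa{\grad(A_ru)^\alpha,\grad(A_ru)^\beta}.
\]
Bishop–Gromov volume comparison under $\Ric\ge-(n-1)\delta$ gives $\vol(B_r(p))/\vol(B_t(p))\le C(n)$ for $t\ge r/2\ge\bar s/2$, so property (3) at scale $r$ forces $|M_{r,t}-I_k|\le C(n)\epsilon$. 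Uniqueness and continuity of the Cholesky factorization then yield $|C|+|C^{-1}|\le 1+C(n)\epsilon$.

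For the chaining step, given $\bar s\le t\le r\le 1$, pick dyadic scales $r=r_0>r_1>\cdots>r_N=t$ with $r_{i+1}\in[r_i/2,r_i]$ and $N\le\log_2(r/t)+1$. Multiplying the one-step bounds gives
\[
|A_tA_r^{-1}|\le\prod_{i=0}^{N-1}|A_{r_{i+1}}A_{r_i}^{-1}|\le (1+C(n)\epsilon)^N\le\Bigl(\frac{r}{t}\Bigr)^{C(n)\epsilon},
\]
and symmetrically for $|A_rA_t^{-1}|$, proving the first inequality. Taking $r=1$ and invoking Sublemma~\ref{SimplieFact} (which, under $\delta\le C(n)^{-1}\epsilon$, already gives $|A_1|+|A_1^{-1}|\le 1+C(n)\epsilon$) then upgrades the relative bound to the absolute second inequality.

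For the Laplacian bound, linearity of $\Delta$ and the second inequality yield immediately
\[
|\Delta(A_ru)^\alpha|_{C^0(B_r(p))}\le|A_r|\sum_\beta|\Delta u^\beta|_{C^0(B_8(p))}\le kL(1+C(n)\epsilon)r^{-C(n)\epsilon}\le C(n,L)r^{-C(n)\epsilon}.
\]
For the gradient bound, the naive estimate $|A_r|\cdot\|\grad u^\alpha\|_{C^0(B_2(p))}$ (using $\|\grad u^\alpha\|_{C^0(B_2(p))}\le C(n,L)$ from \cite[Corollary~3.3]{HP22} quoted at the start of the sketch) only gives $C(n,L)r^{-C(n)\epsilon}$, so one must be subtler. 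I would rescale to $\bar g=r^{-2}g$, view $\bar u:=A_ru$ on $(B_1(\bar p),\bar g)$, and note that $|\bar\Delta\bar u^\alpha|\le r^{2-C(n)\epsilon}L\le L$ by the Laplacian bound above, together with $\dashint_{B_1(\bar p)}\spa{\bar\grad\bar u^\alpha,\bar\grad\bar u^\beta}=\delta^{\alpha\beta}$. A Moser-type mean-value inequality for the Bochner-subharmonic quantity $\|\bar\grad\bar u^\alpha\|^2$ (valid under $\Ric\ge-(n-1)\delta$) combines the $L^2$ bound with the bounded Laplacian to give $\|\bar\grad\bar u^\alpha\|_{C^0(B_{1/2}(\bar p))}\le C(n,L)$; rescaling back and covering $B_r(p)$ by finitely many balls of radius $r/2$ extends this to the stated bound.

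The main obstacle is Step~1: we must control $M_{r,t}$ in a possibly collapsing geometry, where the tangent cone is only $(\delta,K)$-Euclidean for some unknown $K$, not necessarily $n$. What saves the argument is that only the ratio $\vol(B_r)/\vol(B_t)$ appears, and this ratio is controlled purely by Bishop–Gromov under the Ricci lower bound, independent of $K$. This is precisely the respect in which Theorem~\ref{TransThm} extends the non-collapsed version \cite[Proposition~7.7]{CJN21}, and it is also the step where the assumption $t\ge r/2$ (rather than a cruder factor) is used to keep the volume ratio bounded by a dimensional constant.
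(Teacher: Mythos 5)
Your proposal follows the same overall architecture as the paper's argument: a one-step Cholesky comparison across a bounded scale factor, obtained by rewriting the Gram identity at scale $t$ in terms of $A_ru$ and invoking Bishop--Gromov to control the volume ratio; a dyadic chaining of the one-step bound to get the $(r/t)^{C(n)\epsilon}$ factor; the Laplacian bound by linearity together with the bound on $|A_r|$; and a rescaled mean-value (Moser/Bochner) estimate for the $C^0$ gradient bound. Two remarks on the gradient step, which is the one place you deviate. First, a scaling slip: with $\bar g=r^{-2}g$ you should set $\bar u:=r^{-1}A_ru$ rather than $A_ru$, since the extra $r^{-1}$ is precisely what makes $\dashint_{B_1(\bar p)}\spa{\bar\grad\bar u^\alpha,\bar\grad\bar u^\beta}_{\bar g}=\delta^{\alpha\beta}$ and $|\bar\lap\bar u^\alpha|\le C(n,L)r^{1-C(n)\epsilon}$; as written your Gram average carries a spurious factor $r^2$. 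Second, the covering step is underspecified: the mean-value estimate applied on $B_1(\bar p)=B_r(p)$ lands you on $B_{r/2}(p)$, and extending to all of $B_r(p)$ by covering requires $L^2$ gradient control on off-center balls $B_{r/2}(x)\subset B_{2r}(p)$, which you would in turn need to derive from the one-step ratio bound $|A_rA_{2r}^{-1}|\le1+C(n)\epsilon$. The paper sidesteps the covering entirely by applying the same mean-value argument at the larger scale $4r$ (i.e.\ to $r^{-1}A_{4r}u$ on $B_4(\bar p)$), so that interior regularity already gives the bound on $B_1(\bar p)=B_r(p)$, and then transferring from $A_{4r}$ to $A_r$ by the ratio bound at the fixed factor $4$. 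Both routes use exactly the same ingredients; the paper's just chooses the scale of the auxiliary matrix so that no covering argument is needed.
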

\begin{proof}
	For $\bar s\le t\le r\le 1$, by volume comparison,
	\begin{align*}
		\dashint_{B_t(p)}|\spa{\grad (A_ru)^\alpha,\grad (A_ru)^\beta}-\delta^{\alpha\beta}|\le\frac{\vol(B_r(p))}{\vol(B_t(p))}\dashint_{B_r(p)}|\spa{\grad (A_ru)^\alpha,\grad (A_ru)^\beta}-\delta^{\alpha\beta}|\le C(n)\epsilon\left(\frac r t\right)^n.
	\end{align*}
	
	Now assuming $t\le r\le 2t$, for $\epsilon$ is small depending on $n$, by Cholesky decomposition, there exists a unique lower diagonal matrix with positive diagonal entries $\bar A$ with $|\bar A-I_k|\le C(n)\epsilon$, s.t. $$\bar A^{\alpha\gamma_1}\dashint_{B_t(p)}\spa{\grad (A_ru)^{\gamma_1},\grad (A_ru)^{\gamma_2}}(\bar A^T)^{\gamma_2\beta}=\delta^{\alpha\beta}.$$ 
	By definition, we also have, $$A_t^{\alpha\gamma_1}\dashint_{B_t(p)}\spa{\grad u^{\gamma_1},\grad u^{\gamma_2}}A_t^{\gamma_2\beta}=\delta^{\alpha\beta}.$$ So by uniqueness of Cholesky decomposition, $A_t=\bar A A_{r}$. Thus, 
	\begin{equation}\label{Key220515}
		|A_tA_{r}^{-1}-I_k|\le  C(n)\epsilon
	\end{equation}

	Now for any $\bar s\le t\le r\le 1$, $m=0,1,2,...,\bar m$, s.t. $2^{\bar m}t\le r<2^{\bar m+1}t$, combining (\ref{Key220515}) and the elemental fact that for $k\times k$ matrices, $|AB-I_k|\le |A-I_k|+|B-I_k|+k|A-I_k||B-I_k|$, yields $$|A_tA_{2^mt}^{-1}-I_k|\le (1+kC(n)\epsilon)^m-1.$$Thus, $$|A_tA_r^{-1}|=|A_tA_{2^{\bar m}t}^{-1}(A_{2^{\bar m}t}A_r^{-1}-I_k)+A_tA_{2^{\bar m}t}^{-1}|\le (1+C(n)\epsilon)\left(\frac rt\right)^{C(n)\epsilon}.$$The proofs of $|A_rA_t^{-1}|$ and $|A_t^{-1}|$ are similar. 
	
	Now using the estimate of $A_r$,
	\begin{equation}\label{ineq-LapEs}
		|\lap (A_ru)^\alpha|_{C^0(B_r(p))}\le k|A_r|L\le (1+C(n)\epsilon)kLr^{-C(n)\epsilon},
	\end{equation}
	which is the third inequality.
	
	Put $(\bar M,\bar p,\bar g)=(M,p,\frac1{r^2}g)$. By (\ref{ineq-LapEs}), for $\bar s\le r\le\frac14$,$$|\bar\lap (\frac1rA_{4r}u)^\alpha|_{C^0(B_4(\bar p))}=r|\lap (A_{4r}u)^\alpha|_{C^0(B_{4r}(p))}\le C(n,L)r^{1-C(n)\epsilon}.$$
	Combining the above inequality and $\dashint_{B_4(\bar p)}|\|\bar\grad(\frac1rA_{4r}u)^\alpha\|^2-1|\le\epsilon$ (definition of $A_{4r}$), according to \cite[Corollary 3.3]{HP22} again, yields, $$\|\grad (A_{4r}u)^\alpha\|_{C^0(B_r( p))}=\|\bar\grad (\frac1rA_{4r}u)^\alpha\|_{C^0(B_1(\bar p))}\le C(n,L).$$ Hence 
	\begin{align*}
	\|\grad  (A_{r}u)^\alpha\|_{C^0(B_r(p))}=\|\grad  (A_{r}A_{4r}^{-1})^{\alpha\beta}(A_{4r}u)^\beta\|_{C^0(B_r(p))}&\le C(n)\sum_\beta\|\grad (A_{4r}u)^\beta\|_{C^0(B_r(p))}\\&\le C(n,L).
	\end{align*}

\end{proof}

The remain is to show $\bar s=s$.

Argue by contradiction; suppose there exist $\epsilon,\eta,L>0$, and a sequence of manifolds $(M_i,p_i)$, $u_i:(B_1(p_i),p_i)\to(\R^k,0^k)$, satisfying $\Ric_{M_i}\ge-(n-1)\delta_i\to0$, and, for each $i$, there is some $s_i\in(0,1)$ such that, for any $r\in[s_i,1]$, $B_{r}(p_i)$ is $(\delta_i, K)$-Euclidean but not $(\eta_0, K+1)$-Euclidean, and  
\begin{enumerate}
	\item [(1')] $\dashint_{B_8(p_i)}|\spa{\grad u_i^\alpha,\grad u_i^\beta}-\delta^{\alpha\beta}|\le\delta_i$,
	
	\item [(2')] $|\lap u_i^\alpha|_{L^\infty(B_8(p_i))}\le L$,
\end{enumerate}
and $\bar s_i:=\inf S_{\delta_i}(\epsilon)>s_i$. By Sublemma \ref{SimplieFact}, $\bar s_i\to0$. Set $(\bar M_i,\bar p_i)=(\bar s_i^{-1}M_i,p_i)$ and $\bar u_i=s_i^{-1}A_{\bar s_i}u_i:(B_{\bar s_i^{-1}}(\bar p_i),\bar p_i)\to(\R^k,0^k)$. By definition of $A_{\bar s_i}$, $\bar u_i$ satisfies,
\begin{enumerate}
	\item[(3')] $\dashint_{B_1(\bar p_i)}|\spa{\bar \grad \bar u_i^\alpha,\bar \grad \bar u_i^\beta}-\delta^{\alpha\beta}|\le\epsilon$,
	
	\item[(4')] $\dashint_{B_{1}(\bar p_i)}\spa{\bar \grad \bar u_i^\alpha,\bar \grad \bar u_i^\beta}=\delta^{\alpha\beta}$,
	
\end{enumerate}

and by Sublemma \ref{GrowthOfMatrixNorm220515}, 
\begin{enumerate}
	\item[(5')] $|A_{\bar s_i}|\le \bar s_i^{-C(n)\epsilon}$.
	
	\item[(6')] $|\bar \lap \bar u_i^\alpha(\bar x)|_{C^0(B_1(\bar p_i)))}\le C(n,L)\bar s_i^{1-C(n)\epsilon}$.
	
	\item[(7')] $\|\bar \grad \bar u_i^\alpha\|_{C^0(B_{1}(\bar p_i))}\le C(n,L)$.
\end{enumerate}

For $x\in B_{1}(p_i)\backslash B_{\bar s_i}( p_i)$, putting $r=d_{M_i}(x, p_i)$, and $\bar x\in\bar M_i$ be the identity image of $x$, we have
\begin{align*}
	&\|\bar \grad \bar u_i^{\alpha}(\bar x)\|_{\bar g_i}=\|\grad (A_{\bar s_i}u_i)^\alpha(x)\|_{g_i}= \|\grad (A_{\bar s_i}A^{-1}_{r})^{\alpha\beta}(A_{r}u_i)^\beta(x)\|_{g_i}\\\le& \left(\frac{r}{\bar s_i}\right)^{C(n)\epsilon}\sum_{\beta}\|\grad (A_ru_i)^{\beta}(x) \|_{g_i}\le C(n,L)\left(\frac{r}{\bar s_i}\right)^{C(n)\epsilon}=C(n,L)d_{\bar M_i}(\bar x,\bar p_i)^{C(n)\epsilon}.
\end{align*}

Combining (7') and the above estimate, for any $\bar x\in B_{\bar s_i^{-1}}(\bar p_i)$, $$\|\bar \grad \bar u_i(\bar x)\|\le C(n,L)d_{\bar M_i}(\bar x,\bar p_i)^{C(n)\epsilon}+C(n,L),$$which implies $$| \bar u_i(\bar x)|\le C(n,L)d_{\bar M_i}(\bar x,\bar p_i)^{1+C(n)\epsilon}+C(n,L).$$

Passing to a subsequence, we may assume $(\bar M_i,\bar p_i)\GH(X,p_\infty)$ and by (6') and (7'), $\bar u_i\myarrow{W^{1,2}}\bar u:X\to\R^k$. By assumption of $M_i$, $X$ is isometric to $\R^K\times Y$, and for any $R\ge1$, $X$ is not $(\frac12\eta,K+1)$-Euclidean. Further, $\bar u$ is a harmonic function, satisfying for any $\bar x\in X$,$$| \bar u(\bar x)|\le C(n,L)d_{X}(\bar x,\bar p)^{1+C(n)\epsilon}+C(n,L).$$ According the following gap theorem, 

\begin{theorem}[\cite{HH22}]\label{thm-gap-har-RCD}
	Given $\eta> 0$ and $N\geq k\geq1$ with $k\in \mathbb{Z}^{+}$, there exists $\epsilon=\epsilon(N,\eta)>0$ such that the following holds.
	Suppose $(X,d,m)$ is an $\mathrm{RCD}(0,N)$ space which is $k$-splitting, and $(B_{r}(p),d)$ is not $(\eta,k+1)$-Euclidean for any $r\geq 1$.
	If $u : X\to \mathbb{R}$ is a harmonic function such that
	\begin{align*}
		|u(x)|\leq Cd(x,p)^{1+\epsilon}+ C,
	\end{align*}
	for some $C> 0$,
	then $u$ is a linear combination of the $\mathbb{R}^{k}$-coordinates in $X$.
\end{theorem}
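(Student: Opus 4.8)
The plan is to argue by contradiction, after first reducing to the case $k=0$, and then to combine the monotonicity of the Almgren frequency on $\mathrm{RCD}(0,N)$ spaces with a blow-down whose cross-section is forced to almost saturate the Lichnerowicz spectral gap. \emph{Reduction to $k=0$.} Since $X$ is $k$-splitting, write $X=\mathbb{R}^{k}\times Y$, where $Y$ is $\mathrm{RCD}(0,N-k)$ and splits off no line. Translation along the $\alpha$-th Euclidean factor is an isometry, so each $\partial_\alpha u:=\langle\grad u,\grad x^\alpha\rangle$ is again harmonic; the Cheng--Yau-type gradient estimate on $\mathrm{RCD}(0,N)$ gives $\|\grad u\|\le C(1+d(\cdot,p)^{\epsilon})$, so $\partial_\alpha u$ has strictly sublinear growth and is therefore a constant $c^\alpha$ by Yau's Liouville theorem (in its $\mathrm{RCD}$ form). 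Then $v:=u-\sum_\alpha c^\alpha x^\alpha$ is harmonic, depends only on the $Y$-coordinate, and satisfies $|v|\le C(1+d_Y(\cdot,p_Y)^{1+\epsilon})$; it suffices to prove $v$ is constant, since then $u=\sum_\alpha c^\alpha x^\alpha+\mathrm{const}$. Moreover, if $Y$ were $(\eta/C(n),1)$-Euclidean at some scale $r\ge 1$, then $X$ would be $(\eta,k+1)$-Euclidean at that scale, contradicting the hypothesis; so, after relabelling $N-k$ as $N$ and $\eta/C(n)$ as $\eta$, we may assume $k=0$, i.e. $Y$ is $\mathrm{RCD}(0,N)$, not $(\eta,1)$-Euclidean at any scale $\ge1$, and $v$ is harmonic with $|v|\le C(1+d^{1+\epsilon})$.

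\emph{Frequency bound.} Next I would introduce the Almgren frequency $W(r)=rD(r)/H(r)$, with $D(r)=\int_{B_r(p)}\|\grad v\|^2$ and $H(r)=\int_{\partial B_r(p)}v^2$ in the appropriate $\mathrm{RCD}$ formulation; it is monotone non-decreasing on $\mathrm{RCD}(0,N)$ spaces. The growth bound on $v$, via the standard relation between $\frac{d}{dr}\log H(r)$ and $W(r)$, forces $\limsup_{r\to\infty}W(r)\le 1+C(N)\epsilon$, hence $W(r)\le 1+C(N)\epsilon$ for every $r$ by monotonicity. If $v$ is non-constant then $W$ does not tend to $0$, and the sublinear Liouville theorem excludes $\lim_{r\to\infty}W(r)<1$; thus $W(r)\to W_\infty\in[1,\,1+C(N)\epsilon]$.

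\emph{Blow-down and almost rigidity.} Choosing $r_i\to\infty$, pass to a pointed measured Gromov--Hausdorff limit $(Y,r_i^{-1}d,m_i,p)\to(Y_\infty,p_\infty)$, still $\mathrm{RCD}(0,N)$; the normalized functions $v_i:=v(r_i\,\cdot\,)/H(r_i)^{1/2}$ converge in $W^{1,2}_{\mathrm{loc}}$ (using uniform elliptic estimates coming from the frequency bound) to a non-constant harmonic $v_\infty$ on $Y_\infty$ whose frequency is identically $W_\infty$, by the monotonicity. Rigidity of constant frequency on $\mathrm{RCD}(0,N)$ then forces $Y_\infty=C(Z)$ to be a metric cone over an $\mathrm{RCD}(N-2,N-1)$ space $Z$ with $\diam(Z)\le\pi$, with $v_\infty=\rho^{W_\infty}\phi(z)$ for a non-constant eigenfunction $\phi$ of eigenvalue $W_\infty(W_\infty+N-2)$. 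The Lichnerowicz estimate on $Z$ gives $W_\infty(W_\infty+N-2)\ge N-1$, compatible with $W_\infty\ge 1$; combined with $W_\infty\le 1+C(N)\epsilon$ this pins the first positive eigenvalue of $Z$ into $[N-1,\,N-1+\Psi(\epsilon\,|\,N)]$. By the quantitative Obata (almost-rigidity) theorem for the Lichnerowicz gap on $\mathrm{RCD}(N-1)$ spaces, $Z$ is $\Psi(\epsilon\,|\,N)$-Gromov--Hausdorff-close to a spherical suspension, so $Y_\infty=C(Z)$ is $\Psi(\epsilon\,|\,N)$-close, at every scale, to $\mathbb{R}\times C(Z')$. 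Hence for all large $i$ the ball $B_{r_i}(p)\subset Y$ is $(\Psi(\epsilon\,|\,N),1)$-Euclidean with $r_i\ge1$; taking $\epsilon=\epsilon(N,\eta)$ small enough that $\Psi(\epsilon\,|\,N)<\eta$ contradicts the hypothesis, so $v$ is constant and the theorem follows.

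\emph{Main obstacle.} The hardest input is the last one: a quantitative Obata theorem asserting that near-equality in the Lichnerowicz gap on the $\mathrm{RCD}(N-1)$ cross-section forces Gromov--Hausdorff proximity to a spherical suspension, together with the supporting facts that constant frequency characterizes cones and that the frequency machinery and the passage to the blow-down limit behave well in the possibly collapsed setting, where one cannot appeal to the full structure theory. Once these rigidity and almost-rigidity statements are available, the remaining pieces---the reduction via the splitting theorem, the gradient and frequency estimates, and the stability of harmonicity under measured Gromov--Hausdorff convergence---are standard.
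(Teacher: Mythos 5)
The paper does not actually prove this theorem; it is cited verbatim from [HH22, Theorem~3.8] and is used there as the RCD/collapsed replacement for \cite[Lemma~7.8]{CJN21} in the proof of the Transformation Theorem. So there is no in-paper argument against which to compare directly, and the evaluation has to be of your proposal on its own terms.

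Your proposal has a genuine gap at its core: you invoke monotonicity of the raw Almgren frequency $W(r)=rD(r)/H(r)$ (with $H$ a sphere integral) on $\mathrm{RCD}(0,N)$ spaces. That monotonicity is not available---it is already false on smooth manifolds with $\Ric\ge 0$ unless the space is a metric cone, because the Rellich--Pohozaev identity underlying $D'(r)$ needs $\hess(r^2/2)=g$, i.e.\ exact cone structure. On general $\Ric\ge 0$ or $\mathrm{RCD}(0,N)$ spaces one only has almost-monotonicity with curvature-dependent error terms, and those errors vanish precisely when the space is conical, which is the structure you are trying to derive. The approaches of \cite{CJN21} and [HH22] circumvent this by using a \emph{weighted} frequency (built from Green's function or heat-kernel level sets in the spirit of Colding--Minicozzi), whose monotonicity is genuinely available; setting this up and controlling it in the collapsed RCD setting is the real technical content of the cited theorem and is exactly what your outline takes for granted. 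Relatedly, your claim that the blow-down $Y_\infty$ must be a metric cone $C(Z)$ is not automatic for a collapsed $\mathrm{RCD}(0,N)$ space without Euclidean volume growth, and your route to it via ``constant frequency implies cone'' rests on the same unavailable monotonicity, so it is circular. The remaining pieces you sketch---the reduction to $k=0$ via Cheng--Yau plus the sublinear Liouville theorem, the Lichnerowicz eigenvalue computation on the cross-section, the quantitative Obata step, and the contradiction with the non-$(\eta,k+1)$-Euclidean hypothesis---are consistent with the strategy one expects from [HH22] and \cite{CJN21}, but the frequency step as written does not go through.
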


If one assumes $\epsilon\le \epsilon(n,\eta)$ at beginning, $\bar u$ is a linear function. And by $W^{1,2}$-convergence, $$\dashint_{B_{1}(p_\infty)}\spa{\grad \bar u^\alpha, \grad \bar u^\beta}=\delta^{\alpha\beta},$$which implies $\bar u^1,...,\bar u^k$ is an orthonormal basis of some $\R^k$-factor. Hence 
\begin{align*}
	4\lim_{i\to\infty}\dashint_{B_1(\bar p_i)}|\spa{\bar \grad \bar u_i^\alpha,\bar \grad \bar u_i^\beta}-\delta^{\alpha\beta}|=\dashint_{B_1( p_\infty)}|\|\grad \bar u^\alpha+\grad \bar u^\beta\|^2-\|\grad \bar u^\alpha-\grad \bar u^\beta\|^2-4\delta^{\alpha\beta}|=0.
\end{align*} 

So for large $i$, $$\dashint_{B_1(\bar p_i)}|\spa{\bar \grad \bar u_i^\alpha,\bar \grad \bar u_i^\beta}-\delta^{\alpha\beta}|\le\epsilon_i\to0.$$Combining the above inequality and inequality (6'), according to Sublemma \ref{SimplieFact}, we have for any $r\in[\frac1{10},1]$, there exists a $k\times k$ lower triangle matrix $\bar A_{i,r}$ satisfying properties (3) and (4) in Theorem \ref{TransThm} respect to $\bar u_i$. So putting $A_{r\bar s_i}:=A_{i,r}A_{\bar s_i}$, for each $r\in[\frac1{10},1]$, $$\dashint_{B_{r\bar s_i}(p_i)}|\spa{ \grad (A_{r\bar s_i} u_i)^\alpha, \grad  (A_{r\bar s_i} u_i)^\beta}-\delta^{\alpha\beta}|\le\epsilon,$$ and $$\dashint_{B_{r\bar s_i}(p_i)}\spa{\grad (A_{r\bar s_i}u_i)^\alpha,\grad (A_{r\bar s_i}u_i)^\beta}=\delta^{\alpha\beta},$$which means $[\frac{1}{10}\bar s_i,1]\subset S_{\delta_i}(\epsilon)$. This yields a contradiction to definition of $\bar s_i$.

\quad

\noindent\textbf{Acknowledgments.}
The author would like to thank Prof. Xian-Tao Huang for many helpful discussions and suggestions. The author is also grateful to Prof. Xiaochun Rong and Prof. Xi-Ping Zhu for their constant encouragement. The author is partially supported by China Postdoctoral Science Foundation 2021M693675 and Guang-dong Natural Science Foundation 2022A1515011072.

\bibliographystyle{alpha}
\bibliography{ref}

\begin{thebibliography}{HKRX20}

\bibitem[AC92]{AC92}
M.~T. Anderson and J.~Cheeger.
\newblock ${C}^{\alpha}$-compactness for manifolds with {R}icci curvature and
  injectivity radius bounded below.
\newblock {\em J. Differential Geom.}, 35:265--281, 1992.

\bibitem[AG90]{AG}
U.~Abresch and D.~Gromoll.
\newblock On complete manifolds with nonnegative {R}icci curvature.
\newblock {\em J. Amer. Math. Soc.}, 3 (2):355--374, 1990.

\bibitem[BNS22]{BNS}
E.~Bruè, A.~Naber, and D.~Semola.
\newblock Boundary regularity and stability for spaces with {R}icci bounded
  below.
\newblock {\em Invent. Math.}, 228:777–891, 2022.

\bibitem[CC96]{CC96}
J.~Cheeger and T.~H. Colding.
\newblock Lower bounds on {R}icci curvature and the almost rigidity of warped
  products.
\newblock {\em Ann. of Math. (2)}, 144(1):189--237, 1996.

\bibitem[CFG92]{CFG}
J.~Cheeger, K.~Fukaya, and M.~Gromov.
\newblock Nilpotent structures and invariant metrics on collapsed manifolds.
\newblock {\em J. Amer. Math. Soc.}, 5(2):327--372, 1992.

\bibitem[Che91]{Ch91}
J.~Cheeger.
\newblock Critical points of distance functions and applications to geometry.
\newblock {\em Geometric topology: recent developments (Montecatini Terme,
  1990), Springer-Verlag}, pages 1--38, 1991.

\bibitem[CJN21]{CJN21}
J.~Cheeger, W.~Jiang, and A.~Naber.
\newblock Rectifiability of singular sets in noncollapsed spaces with {R}icci
  curvature bounded below.
\newblock {\em Ann. of Math. (2)}, 193(2):407--538, 2021.

\bibitem[CM97]{CoM97}
T.~H. Colding and W.P. Minicozzi.
\newblock Large scale behavior of kernels of schrodinger operators.
\newblock {\em Amer. J. Math.}, 119:1355--1398, 1997.

\bibitem[CM14]{CoMi12}
T.H. Colding and W.P. Minicozzi.
\newblock On uniqueness of tangent cones for {E}instein manifolds.
\newblock {\em Invent. Math.}, 196:515--588, 2014.

\bibitem[CN13]{CN13}
T.H. Colding and A.~Naber.
\newblock Lower {R}icci curvature, branching, and bi-{L}ipschitz structure of
  uniform {R}eifenberg spaces.
\newblock {\em Adv. Math.}, 249:348--358, 2013.

\bibitem[CN15]{CN}
J.~Cheeger and A.~Naber.
\newblock Regularity of {E}instein manifolds and the codimension 4 conjecture.
\newblock {\em Ann. of Math. (2)}, 182:1093--1165, 2015.

\bibitem[Col97]{Co97}
T.H. Colding.
\newblock {R}icci curvature and volume convergence.
\newblock {\em Ann. of Math. (2)}, 145(3):477--501, 1997.

\bibitem[CRX19]{CRX1604}
L.~Chen, X.~Rong, and S.~Xu.
\newblock Quantitative volume space space form rigidity under lower {R}icci
  curvature bound i.
\newblock {\em J. Differential Geom.}, 113(2):227–272, 2019.

\bibitem[DW95]{DW95}
X.~Dai and G.~Wei.
\newblock A comparison-estimate of {T}oponogov type for {R}icci curvature.
\newblock {\em Math. Ann.}, 303:297--306, 1995.

\bibitem[HH22]{HH22}
H.~Huang and X.-T. Huang.
\newblock Almost splitting maps, transformation theorems and smooth fibration
  theorems.
\newblock {\em arXiv:2207.10029v2}, 2022.

\bibitem[HKRX20]{HKRX}
H.~Huang, L.~Kong, X.~Rong, and S.~Xu.
\newblock Collapsed manifolds with {R}icci bounded covering geometry.
\newblock {\em Trans. Amer. Math. Soc.}, 373(11):8039–8057, 2020.

\bibitem[HP22]{HP22}
S.~Honda and Y.~Peng.
\newblock A note on the topological stability theorem from {RCD} spaces to
  {R}iemannian manifolds.
\newblock {\em Manuscripta Math.}, 2022.

\bibitem[Hua20]{Hua}
H.~Huang.
\newblock Fibrations and stability for compact group actions on manifolds with
  local bounded {R}icci covering geometry.
\newblock {\em Front. Math. China}, 15(1):69--89, 2020.

\bibitem[JY17]{JY}
H.~Jiang and Y.-H. Yang.
\newblock Diameter growth and bounded topology of complete manifolds with
  nonnegative {R}icci curvature.
\newblock {\em Ann. Global Anal. Geom.}, 51:359--366, 2017.

\bibitem[KW11]{KW}
V.~Kapovitch and B.~Wilking.
\newblock Structure of fundamental groups of manifolds with {R}icci curvature
  bounded below.
\newblock {\em arXiv:1105.5955v2}, 2011.

\bibitem[Men00a]{Men00}
X.~Menguy.
\newblock Examples with bounded diameter growth and infinite topological type.
\newblock {\em Duke Math. J.}, 102(3):403--412, 2000.

\bibitem[Men00b]{Men00a}
X.~Menguy.
\newblock Noncollapsing examples with positive {R}icci curvature and infinite
  topological type.
\newblock {\em Geom. Funct. Anal.}, 10:600--627, 2000.

\bibitem[OSY99]{OSY}
D.~Ordway, B.~Stephens, and D.~Yang.
\newblock Large volume growth and finite topological type.
\newblock {\em Proc. Amer. Math. Soc.}, 128(4):1191--1196, 1999.

\bibitem[Ron18]{Ro18}
X.~Rong.
\newblock Manifolds of {R}icci curvature and local rewinding volume bounded
  below (in {C}hinese).
\newblock {\em Sci Sin Math}, 48(6):791--806, 2018.

\bibitem[Ron22]{Ro}
X.~Rong.
\newblock Collapsed manifolds with local {R}icci bounded covering geometry.
\newblock {\em arXiv:2211.09998}, 2022.

\bibitem[She96]{Shen96}
Z.~Shen.
\newblock Complete manifolds with nonnegative {R}icci curvature and large
  volume growth.
\newblock {\em Invent. Math.}, 125:393--404, 1996.

\bibitem[Sor98]{Sor98}
C.~Sormani.
\newblock Busemann functions on manifolds with lower bounds on {R}icci
  curvature and minimal volume growth.
\newblock {\em J. Differential Geom.}, 48:557--585, 1998.

\bibitem[Sor01]{Sor01}
C.~Sormani.
\newblock On loops representing elements of the fundamental group of a complete
  manifold with nonnegative {R}icci curvature.
\newblock {\em Indiana Univ. Math. J.}, 50(4):1867--1884, 2001.

\bibitem[SS97]{ShSh}
J.~Sha and Z.~Shen.
\newblock Complete manifolds with nonnegative {R}icci curvature and
  quadratically nonnegatively curved infinity.
\newblock {\em Amer. J. Math.}, 119(6):1399--1404, 1997.

\bibitem[SW93]{SW93}
Z.~Shen and G.~Wei.
\newblock Volume growth and finite topological type.
\newblock {\em Proc. Sympos. Pure Math.}, 54(3):539--549, 1993.

\bibitem[SY89]{SY}
J.~Sha and D.~Yang.
\newblock Examples of manifolds of positive {R}icci curvature.
\newblock {\em J. Differential Geom.}, 29:95--103, 1989.

\bibitem[Wan04]{Wa04}
Q.~Wang.
\newblock Finite topological type and volume growth.
\newblock {\em Ann. Global Anal. Geom.}, 25(1):1--9, 2004.

\bibitem[WZ23]{WZ21}
B.~Wang and X.~Zhao.
\newblock Canonical diffeomorphisms of manifolds near spheres.
\newblock {\em J. Geom. Anal.}, 33:304, 2023.

\bibitem[Xia02]{Xia02}
C.~Xia.
\newblock Large volume growth and the topology of open manifolds.
\newblock {\em Math. Z.}, 239:515--526, 2002.

\end{thebibliography}

\end{document}